\theoremstyle{definition}
\theoremstyle{plain}
\newtheorem{theorem}[equation]{Theorem}
\newtheorem{corollary}[equation]{Corollary}
\newtheorem{lemma}[equation]{Lemma}
\newtheorem{proposition}[equation]{Proposition}
\newtheorem{conjecture}[equation]{Conjecture}
\newtheorem{define}[equation]{Definition}
\newtheorem{definitions}[equation]{Definitions}
\newtheorem{remark}[equation]{Remark}
\newtheorem{example}[equation]{Example}
\newcommand{\IC}{\mathbb{C}}
\newcommand{\IH}{\mathbb{H}}
\newcommand{\IR}{\mathbb{R}}
\newcommand{\IZ}{\mathbb{Z}}
\newcommand{\inj}{\rm inj}
\begin{document}

 \title[Singer Conjecture and Generalized Graph Manifolds]{Generalized Graph Manifolds, Residual Finiteness, and the Singer Conjecture}


\author{Luca F. Di Cerbo}
\address{Department of Mathematics, University of Florida, Gainesville, FL 32607, USA}
\curraddr{}
\email{dicerbo@ufl.edu}
\thanks{Supported in part by NSF grant DMS-2104662}

\author{Michael Hull }
\address{Department of Mathematics \& Statistics, University of North Carolina, Greensboro, NC 27402, USA}
\curraddr{}
\email{mbhull@uncg.edu}
\thanks{}

\subjclass[2020]{Primary 	57N65, 58A10}

\date{}

\begin{abstract}
We use Price type inequalities to give a geometric proof of the Singer conjecture for extended graph manifolds and pure complex-hyperbolic higher graph manifolds with residually finite fundamental groups. In real dimension three, where a result of Hempel ensures that the fundamental group is always residually finite, we obtain a new proof of a well-known result of Lott and L\"uck. Finally, we give several classes of higher graph manifolds which do indeed have residually finite fundamental groups.
\end{abstract}

\maketitle



\tableofcontents


\section{Introduction}\label{introduction}

In order to state the \emph{Singer conjecture}, we need to define $L^2$-Betti numbers. $L^{2}$-Betti numbers are non-negative \emph{real} valued numerical invariants associated to a closed Riemannian manifold $M$ and an infinite regular cover $\widetilde{M}$ of such. They were originally introduced by Atiyah in \cite{Atiyah} in connection with $L^{2}$-index theorems. We briefly recall their definition. For an extensive treatment of $L^2$-Betti numbers and the technical details, we refer to L\"uck's treatise \cite{LuckBook}. 

Let $\Gamma$ be a discrete group, and let $M$ be a compact free proper $\Gamma$-manifold without boundary endowed with a $\Gamma$-invariant Riemannian metric. In other words, one has $M=\widetilde{M}\backslash \Gamma$, where $\Gamma\leq \text{Iso}(\widetilde{M})$ with $\widetilde{M}$ an infinite cover  of $M$, e.g., the \emph{universal} cover or the \emph{universal abelian} cover of $M$. Define the space of smooth $L^2$-integrable harmonic  $k$-forms 
\[
\mathcal{H}^k_{(2)} (\widetilde{M}) \; = \; \big\{ \, \omega\in\Omega^k (\widetilde{M}) \;  | \;  \Delta_d \omega = 0, \; \int_{\widetilde{M}} \omega \wedge *\omega  <\infty \big\}
\]
where $*$ is the Hodge star operator and $\Delta_d = d d^* + d^* d$ is the Hodge-Laplacian operator.
By \cite[Section 1.3.2]{LuckBook}, the spaces $\mathcal{H}^k_{(2)}(\widetilde{M})$ are finitely generated Hilbert modules over the von Neumann algebra $\mathcal{N}(\Gamma)$ of  $\Gamma$.  We define the $L^2$\emph{-Betti numbers}  $b^{(2)}_k \big(M; \mathcal{N}(\Gamma) \big)$ of $(M,\Gamma)$ as  the von Neumann dimension of the $\mathcal{N}(\Gamma)$-modules   $\mathcal{H}^k_{(2)}(\widetilde{M})$:
\[
b^{(2)}_k \big(M;\mathcal{N}(\Gamma) \big) \; \stackrel{{\rm def}}{=} \; \dim_{\mathcal{N}(\Gamma)}\mathcal{H}^k_{(2)}( \widetilde{M} ).
\] 
The $L^2$-Betti numbers assume values in the extended interval $[0,\infty]$ of the real numbers, and $b^{(2)}_k \big(M, \mathcal{N}(\Gamma) \big) \in [0,\infty)$ if the action of $\Gamma$ is co-compact. 

For simplicity's sake, when $\pi \colon \widetilde{M}\rightarrow M$ is the topological universal cover of $M$, we will refer to $L^2$-Betti numbers as follows:
\[
b^{(2)}_k(M) \quad \stackrel{{\rm def}}{=} \quad b^{(2)}_k \big(M;\mathcal{N}(\Gamma) \big).
\]  
As in this paper we are only concerned about $L^2$-Betti numbers computed with respect to the \emph{universal cover}, we believe this concise notation should not cause any confusion. Finally, we notice that
\begin{align}\label{G-dimension}
b^{(2)}_k(M)=0 \quad \Longleftrightarrow \quad \mathcal{H}^{k}_{2}(\widetilde{M})=0;
\end{align}
for a proof see Atiyah \cite{Atiyah} or \cite[Chapter I]{LuckBook}.

We can now state the celebrated and long-standing conjecture of Singer.

\begin{conjecture}[Singer Conjecture]\label{Singer}
	If $M$ is a closed aspherical manifold of real dimension $2n$, then
	\begin{equation*}
	b^{(2)}_{l}(M)=\begin{cases} 
	(-1)^n \chi_{\rm top}(M)  &\mbox{if }\quad    l =  n  \\
	0 & \mbox{if }\quad   l  \neq n.
	\end{cases}
	\end{equation*}
	If $\dim_{\IR}(M)=2n+1$, we have
	\begin{align}\notag
	b^{(2)}_l(M)=0, \mbox{   for any  } l.
	\end{align}
\end{conjecture}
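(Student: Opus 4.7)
The plan is to acknowledge that Conjecture~\ref{Singer} is a long-standing open problem in full generality, so the only viable strategy is to exploit extra structure on $M$---in our setting, a geometric decomposition into pieces on which $L^2$-harmonic forms are controllable. The approach has three moving parts: an appropriate decomposition of $M$; vanishing of $\mathcal{H}^k_{(2)}(\widetilde{M})$ on each piece, supplied by Bochner or Price type inequalities; and a gluing argument that runs in a tower of finite covers and thereby requires residual finiteness of $\pi_1(M)$.

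First I would fix the decomposition of $M$. In dimension three this is Thurston's torus decomposition, as exploited by Lott--L\"uck; in higher dimension it is the explicit graph-manifold-type decomposition into finite-volume locally symmetric pieces with cusps filled by (virtually) nilpotent boundary pieces. On each symmetric piece the desired vanishing follows from classical inputs---Dodziuk in the real-hyperbolic case, Gromov's K\"ahler-hyperbolic theorem in the complex-hyperbolic case---while the middle-dimensional $L^2$-Betti number is identified with $(-1)^n \chi_{\mathrm{top}}(M)$ through Atiyah's $L^2$-index theorem. On the nilpotent boundary pieces vanishing in every degree follows from amenability of the fundamental group combined with~(\ref{G-dimension}).

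Second I would glue. A naive Mayer--Vietoris for reduced $L^2$-cohomology does not close up cleanly, so instead I would lift everything to a residually finite tower $\{M_i \to M\}$ and apply L\"uck's approximation theorem to rewrite $b^{(2)}_k(M) = \lim_i b_k(M_i) / [\pi_1(M):\pi_1(M_i)]$. A Price type integral inequality on each piece then forces harmonic forms of the wrong degree to concentrate in a collar of the gluing hypersurfaces, whose relative volume tends to zero along the tower; combined with the piecewise vanishing statements this yields $b^{(2)}_k(M)=0$ in the required range.

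The main obstacle will be establishing a Price inequality of sufficient strength at every degree off the middle dimension. This demands precise geometric control of each piece and of the collars of its totally geodesic boundary, and is exactly why the theorems of this paper are stated only for extended graph manifolds and pure complex-hyperbolic higher graph manifolds with residually finite fundamental group, rather than for general aspherical $M$. Handling arbitrary aspherical $M$ would require, in addition, a proof of residual finiteness in that generality, which is itself wide open beyond dimension three.
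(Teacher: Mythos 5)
The statement is a conjecture; the paper neither proves it nor claims to, and you are right to acknowledge this before pivoting to the special cases (extended graph manifolds and pure complex-hyperbolic higher graph manifolds with residually finite $\pi_1$) that the paper actually handles in Theorems~\ref{real} and~\ref{complex}. Your high-level plan --- decompose $M$ into locally symmetric pieces, control harmonic forms on those pieces, lift to a residually finite tower, invoke L\"uck approximation, and make the contribution of the gluing region negligible --- does capture the paper's strategy.

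The tools you name, however, are not quite the ones the paper deploys, and the mismatch is not cosmetic. Dodziuk's vanishing theorem and Gromov's K\"ahler-hyperbolicity are assertions about the complete hyperbolic space form (or a closed manifold uniformized by it); they say nothing about a point $x$ of a compact manifold that merely sits inside an embedded hyperbolic ball of large radius $R$. In the tower $M^i$ that is exactly the best one can arrange (Lemma~\ref{fat}), so the indispensable input is the \emph{quantitative} Price-inequality pointwise bound on the Betti-number density $\rho_k(x)$ in terms of $R$ supplied by Lemmas~\ref{Restimate} and~\ref{Cestimate}, not a qualitative vanishing theorem on $\IH^n$. Likewise, the paper does not treat the nilmanifold/collar region via amenability of its $\pi_1$; it uses the Connell--Su\'arez-Serrato metrics $\{g_m\}$ of Lemma~\ref{cheeger}, the injectivity-radius lower bound coming from Cheeger--Gromov $F$-structure theory and bounded covering geometry, and Lemma~\ref{estimate} to keep $\rho_k$ uniformly bounded there, while the relative volume of that region goes to zero as $m\to\infty$. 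Note finally that this volume shrinks in the metric parameter $m$, not along the tower in $i$; the proof runs the two limits in a nested order ($i\to\infty$ first, then $m\to\infty$), which your outline collapses. With those substitutions, your sketch essentially is the paper's argument for the special cases --- but as written the piecewise-vanishing step would not close, because the pieces in the covers are not complete hyperbolic manifolds.
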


Surprisingly enough, this conjecture is still open even under the assumption that $M$ admits a metric with \emph{strictly negative} sectional curvature! We refer to \cite{DS17} for more on the state of the art concerning this problem, and also for some evidence supporting this conjecture under weaker assumptions than negative sectional curvature.

The goal of this paper is to give a geometric proof of the Singer conjecture for two interesting classes of aspherical manifolds: \emph{extended graph manifolds} and \emph{pure complex-hyperbolic higher graph manifolds} with residually finite fundamental group. The class of extended graph manifolds was identified and studied in details by Frigerio-Lafont-Sisto in \cite{LafontBook} (specifically see Definition 0.2 in \cite{LafontBook}). These manifolds are decomposed into finitely many pieces (the vertices), each vertex is a manifold with boundary tori (the edges), and the tori boundary component of various pieces are glued together via affine diffeomorphisms. Moreover, the interior of each vertex is diffeomorphic either to a \emph{complete} finite-volume real-hyperbolic manifold with toral cusps, or to the product of a standard torus with a lower dimensional \emph{complete} finite-volume real-hyperbolic manifold with toral cusps. We denote by $(\IH^n_{\IR}, g_{\IR})$ the real-hyperbolic $n$-dimensional space with the hyperbolic metric normalized to have sectional curvature equal to $-1$.

\begin{theorem}\label{real}
	Let $M$ be an extended graph manifold with $k\geq 1$ real-hyperbolic pieces $\{(V_j, g_{\IR})\}^k_{j=1}$ and residually finite fundamental group. If $\dim_{\IR}(M)=2n$, we have
	\begin{equation*}
	b^{(2)}_{l}(M)=\begin{cases} 
	(-1)^n \chi_{\rm top}(M)=(-1)^n\sum^k_{j=1}\chi_{\rm top}(V_j)  &\mbox{if }\quad    l =  n,  \\
	0 & \mbox{if }\quad   l  \neq n.
	\end{cases}
	\end{equation*}	
    Finally, if $\dim_{\IR}(M)=2n+1$ we have
	\begin{align}\notag
	b^{(2)}_l(M)=0 \mbox{   for any  } l.
	\end{align}
\end{theorem}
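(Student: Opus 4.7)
The plan is to prove vanishing of $b^{(2)}_l(M)$ outside the middle dimension via a Mayer--Vietoris decomposition along the graph structure, and then recover the middle-dimensional value from the Atiyah $L^2$-index formula. Write $M = \bigcup_j V_j$, where the pieces are glued pairwise along tori $T_i$. Since $M$ is aspherical and each inclusion $T_i \hookrightarrow M$ is $\pi_1$-injective, $\pi_1(T_i)$ embeds as an infinite amenable subgroup of $\Gamma := \pi_1(M)$.

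First I would handle the building blocks. For each gluing torus $T_i$, the $L^2$-Betti numbers computed relative to the cover $\widetilde{M} \to M$ vanish in every degree by the Cheeger--Gromov amenable-subgroup theorem applied to $\pi_1(T_i) \leq \Gamma$. For each product piece $V_j = T^s \times N$ with $s \geq 1$, the same mechanism forces vanishing in every degree via the normal $\IZ^s$ subgroup in $\pi_1(V_j) \leq \Gamma$. For a purely hyperbolic piece $V_j$, the $L^2$-cohomology of $\IH^n_\IR$ is concentrated in the middle dimension when $n$ is even and vanishes when $n$ is odd, by the classical Dodziuk computation; truncating at toral cusps only alters boundary contributions, which are annihilated by the torus vanishing above.

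Next I would assemble these via the Mayer--Vietoris sequence for reduced $L^2$-cohomology over $\mathcal{N}(\Gamma)$. Since von Neumann dimension is additive on weakly exact sequences of finitely generated Hilbert $\mathcal{N}(\Gamma)$-modules, the piece-wise and boundary-wise vanishing propagates to $b^{(2)}_l(M) = 0$ for $l \neq n$ when $\dim_{\IR} M = 2n$, and for all $l$ when $\dim_{\IR} M$ is odd. Residual finiteness enters at this stage: to identify the $L^2$-cohomology of the non-closed pieces with respect to $\mathcal{N}(\Gamma)$ with the intrinsic computations above, I would invoke L\"uck approximation in a tower of finite regular covers $M_\alpha \to M$ with $\bigcap_{\alpha} \pi_1(M_\alpha) = \{1\}$, available precisely because $\Gamma$ is residually finite. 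In the even-dimensional case the middle-dimensional value is then forced by
\[
\chi_{\rm top}(M) \;=\; \sum_{l}(-1)^l b^{(2)}_l(M) \;=\; (-1)^n b^{(2)}_n(M),
\]
together with the additivity $\chi_{\rm top}(M) = \sum_{j=1}^k \chi_{\rm top}(V_j)$, which follows from inclusion--exclusion using that tori and torus products have vanishing Euler characteristic.

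The main obstacle will be the non-closed nature of the pieces: the cover of $V_j$ induced from $\widetilde{M}$ is in general \emph{not} the universal cover of $V_j$, so one cannot directly transfer intrinsic $L^2$-computations to the $\mathcal{N}(\Gamma)$-modules that appear in the Mayer--Vietoris sequence. The delicate technical step is to set up the induction/restriction formalism for von Neumann dimensions so that the Dodziuk calculation on $\IH^n_\IR$ and the torus-vanishing argument apply to the modules actually appearing in the sequence; this is precisely where residual finiteness together with L\"uck approximation do the real work.
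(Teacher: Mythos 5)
Your proposal takes a genuinely different route from the paper. You propose a Mayer--Vietoris argument along the graph-of-spaces decomposition, feeding in vanishing for the (amenable) edge and non-pure vertex groups and the Dodziuk computation for the pure hyperbolic vertices. The paper instead proceeds analytically: it equips $M$ with the Connell--Su\'arez-Serrato metrics $\{g_m\}$ from Lemma \ref{cheeger}, which are exactly hyperbolic on growing portions of the pure pieces and collapse everything else with bounded covering geometry; it then pulls these back to a residually finite tower of regular covers $M^i\to M$, uses the Price-inequality bounds of Lemma \ref{Restimate} to control the pointwise Betti-number density $\rho_l$ on the thick part, uses the bounded covering geometry to control $\rho_l$ on the thin part, and finally applies L\"uck's approximation theorem to convert the resulting bound on $b_l(M^i)/\deg(\pi_i)$ into vanishing of $b^{(2)}_l(M)$ for $l$ below the middle degree, with Poincar\'e duality and the $L^2$-index formula finishing the job. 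Residual finiteness enters in the paper only to supply the cofinal tower that L\"uck's approximation theorem requires.

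Within your own route there are a few genuine problems. First, the obstacle you flag at the end --- that the cover of $V_j$ induced from $\widetilde{M}$ is ``in general not the universal cover'' --- does not exist: each piece is $\pi_1$-injective, so each chamber in $\widetilde{M}$ lying over $\bar V_j$ is literally a copy of the universal cover of $\bar V_j$, and the identification you want is given by the standard induction formula for von Neumann dimension ($\dim_{\mathcal N(\Gamma)}\mathcal N(\Gamma)\otimes_{\mathcal N(H)}N = \dim_{\mathcal N(H)}N$), which holds for an arbitrary inclusion $H\leq\Gamma$ and has nothing to do with L\"uck approximation or residual finiteness. Consequently, if your Mayer--Vietoris argument went through as stated it would prove Theorem \ref{real} \emph{without} residual finiteness, a strictly stronger statement than the paper's. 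That should give you pause: the authors explicitly remark in the introduction that the graph-of-groups route is known only for $b^{(2)}_1$ and that no analogous formula is available for $b^{(2)}_k$ with $k\geq 2$, and they devote a whole section to establishing residual finiteness in special cases precisely because their argument needs it. Second, the ``Cheeger--Gromov amenable-subgroup theorem'' is a statement about \emph{normal} amenable subgroups; $\pi_1(T_i)$ is not normal in $\Gamma$. What you actually want is Cheeger--Gromov's theorem that infinite amenable groups have vanishing $L^2$-Betti numbers, applied to $\pi_1(T_i)$ intrinsically and then induced up (and similarly for the product pieces via K\"unneth). Third, ``the classical Dodziuk computation'' is for the closed case; the assertion that the $L^2$-Betti numbers of a \emph{finite-volume} non-compact hyperbolic $2n$-manifold are concentrated in degree $n$ is true, but it is a separate fact requiring the Cheeger--Gromov $L^2$-index theory with cusp control or the proportionality principle for lattices, and you need a citation rather than a wave at truncation ``only altering boundary contributions.'' Until you pin down exactly what, if anything, makes the Mayer--Vietoris argument require residual finiteness --- the two ingredients you invoke for it do not --- you should regard your proof as incomplete.
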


The class of higher graph manifolds was first defined and studied by Connell-Su\'arez-Serrato in \cite{CSS19} (see Definition 1 in \cite{CSS19}). This class includes as sub-class the extended graph manifolds defined by Frigerio-Lafont-Sisto, see Remark 1 in \cite{CSS19}. Roughly speaking, the authors consider graph-like manifolds where the interior of the vertices are diffeomorphic to finite-volume locally symmetric spaces of rank one. Recall that such spaces have finitely many cuspidal ends with nilmanifolds cross-sections. The nilmanifold boundaries are then glued together via affine diffeomorphisms in order to manufacture a closed connected graph-like manifold. Here we are concerned with higher graph manifolds whose vertices are modeled on complex-hyperbolic manifolds with truncated cusps. We refer to such manifolds as pure complex-hyperbolic higher graph manifolds.  We denote by $(\IH^n_{\IC}, g_{\IC})$ the complex-hyperbolic $n$-dimensional space with the hyperbolic metric normalized to have holomorphic sectional curvature equal to $-1$.

\begin{theorem}\label{complex}
	Let $M$ be a pure complex-hyperbolic higher graph manifold with residually finite fundamental group. For $k\geq 2$, let $\{(W_j, g_{\IC})\}^k_{j=1}$ denote the complex-hyperbolic pieces. We the have
	\begin{equation*}
	b^{(2)}_{l}(M)=\begin{cases} 
	(-1)^n \chi_{\rm top}(M)=(-1)^n\sum^k_{i=1}\chi_{\rm top}(W_j)  &\mbox{if }\quad    l =  n,  \\
	0 & \mbox{if }\quad   l  \neq n.
	\end{cases}
	\end{equation*}	
\end{theorem}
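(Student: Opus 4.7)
The strategy I would pursue mirrors that of Theorem \ref{real}, replacing the real-hyperbolic pieces by complex-hyperbolic pieces and the toral cusp cross-sections by the infranilmanifold cross-sections of complex-hyperbolic cusps. First I would decompose $M$ via Mayer--Vietoris into two open subsets: $\sC$, a regular neighborhood of the union of the truncated complex-hyperbolic pieces $\{W_j\}_{j=1}^k$, and $\sN$, a regular neighborhood of the infranilmanifold gluing walls. Because $\pi_1(M)$ is residually finite, L\"uck's approximation theorem applies throughout; the analytic estimates can be carried out on a cofinal tower of finite covers $\{M_i \to M\}$ in which the injectivity radius along the nilmanifold walls can be arranged to grow without bound.

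The gluing walls in $\sN$ are fibered by infranilmanifolds $N_\alpha$ whose fundamental groups are virtually nilpotent, hence amenable, and inject into $\pi_1(M)$. By the Cheeger--Gromov vanishing for amenable groups (in the residually finite setting, equivalently by L\"uck approximation), one obtains
\[
b^{(2)}_l\big(\sN;\mathcal{N}(\pi_1(M))\big) \; = \; 0 \; = \; b^{(2)}_l\big(\sC \cap \sN;\mathcal{N}(\pi_1(M))\big)
\]
in every degree $l$. The Mayer--Vietoris long exact sequence in $L^2$-cohomology then collapses to
\[
b^{(2)}_l(M) \; = \; \sum_{j=1}^{k} b^{(2)}_l\big(W_j;\mathcal{N}(\pi_1(M))\big),
\]
reducing the computation to the individual pieces.

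Each $W_j$ is the truncation of a finite-volume complex-hyperbolic manifold, and its equivariant lift inside $\widetilde{M}$ is an open subset of $\IH^n_\IC$. For a closed complex-hyperbolic manifold the classical work of Borel shows that $L^2$-harmonic forms are concentrated in middle real degree $n$ and that the middle-dimensional $L^2$-Betti number is $(-1)^n\chi_{\rm top}$. To transport this concentration into the truncated pieces sitting inside $M$, I would apply a Price type inequality to $L^2$-harmonic $l$-forms on $\widetilde{M}$: in degrees $l \neq n$ such a form must decay across the amenable collars surrounding the nilmanifold walls, while the pointwise Weitzenb\"ock--Bochner estimate for $\IH^n_\IC$ prevents concentration in the interior, forcing $b^{(2)}_l(W_j;\mathcal{N}(\pi_1(M))) = 0$. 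For $l=n$, combining the previous vanishing with Atiyah's $L^2$-index theorem gives $b^{(2)}_n(M)=(-1)^n\chi_{\rm top}(M)$; additivity of $\chi_{\rm top}$ under gluings along odd-dimensional infranilmanifolds (which have vanishing Euler characteristic) yields $\chi_{\rm top}(M) = \sum_j \chi_{\rm top}(W_j)$.

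The main obstacle I expect lies in establishing the Price type inequality at the complex-hyperbolic cusps. In the real-hyperbolic setting of Theorem \ref{real} the cusp cross-sections are flat tori and the Weitzenb\"ock constants are essentially explicit; in the complex-hyperbolic setting the cross-sections are Heisenberg-type infranilmanifolds with nontrivial curvature, and one must verify that the Bochner-type estimate still produces strict positivity in degrees $l \neq n$ after averaging against cutoff functions supported near the walls. Residual finiteness is essential here: it supplies covers $M_i$ in which the injectivity radius along the nilmanifold walls grows without bound, so the boundary contributions in the integrated Price identity vanish in the limit, and L\"uck approximation then transports the vanishing back to $M$.
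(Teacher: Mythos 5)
Your proposal takes a genuinely different route from the paper. The paper never performs a Mayer--Vietoris reduction. Instead it uses the Cheeger--Gromov--Fukaya collapsing metrics of Connell--Su\'arez-Serrato (Lemma \ref{cheegerc}): for each $m$ the metric $g_m$ is complex-hyperbolic up to height $m$ in the cusps and collapses the gluing regions with bounded covering geometry. For each fixed $m$ one passes to a cofinal tower of normal finite covers $(M^i,g^i_m)$, writes the ordinary Betti number as $b_l(M^i)=\int_{M^i}\rho_l\,d\mu_{g^i_m}$, applies the pointwise Price estimate of Lemma \ref{Cestimate} on the part of $M^i$ that carries a large isometric complex-hyperbolic ball (Lemma \ref{fatc}), and applies the bounded-geometry estimate of Lemma \ref{estimate} on the collapsing remainder (whose volume tends to zero). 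Dividing by $\deg(\pi_i)$ and letting $i\to\infty$, L\"uck's approximation theorem converts this into a bound on $b^{(2)}_l(M)$, which vanishes upon letting $m\to\infty$; the middle-dimensional number then comes out of the Euler characteristic. Residual finiteness is genuinely load-bearing in this argument because L\"uck approximation on a cofinal tower is the only bridge from the finite-cover Betti numbers to $b^{(2)}_l(M)$.

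Your Mayer--Vietoris route, by contrast, would not need residual finiteness at all if it were carried out correctly, which is worth noticing. Cheeger--Gromov vanishing for amenable groups and the additivity of von Neumann dimension across the graph-of-groups Mayer--Vietoris sequence are statements that hold without any approximation by finite quotients. So your framing is internally inconsistent: you keep appealing to L\"uck approximation and to growing injectivity radius along walls in finite covers, but the reduction $b^{(2)}_l(M)=\sum_j b^{(2)}_l\big(W_j;\mathcal{N}(\pi_1(M))\big)$ you want is a statement about the universal cover and the Bass--Serre tree, not about finite covers. Indeed the authors remark that exactly this reduction is currently only documented in degree $1$ (the citations to Fern\'os--Valette and Chatterji--Hughes--Kropholler), so if you intend this route you should prove the higher-degree reduction carefully rather than assert it.

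The more serious gap is your treatment of the individual pieces. You invoke Borel's concentration result ``for a closed complex-hyperbolic manifold'' and then propose to ``transport'' it to the truncated pieces via a Price type inequality applied to $L^2$-harmonic forms on $\widetilde{M}$. This does not work as stated: the truncated piece $W_j$ is a compact manifold with infranilmanifold boundary whose universal cover is the complement of open horoballs in $\IH^n_\IC$, not $\IH^n_\IC$ itself, and the Price inequalities in the paper (Lemma \ref{Cestimate}) are pointwise bounds on the Betti number density of a \emph{closed} $(M^{2n},g)$ at points with large isometric balls --- they are not a mechanism for transferring an $L^2$-index computation from a closed locally symmetric manifold to a manifold with boundary. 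The correct statement you want is that $W_j$ is aspherical and homotopy equivalent to the complete finite-volume complex-hyperbolic manifold $\Gamma_j\backslash\IH^n_\IC$, so $b^{(2)}_l(W_j;\mathcal{N}(\pi_1(W_j)))=b^{(2)}_l(\Gamma_j)$, and the $L^2$-Betti numbers of a torsion-free lattice $\Gamma_j\leq\mathrm{PU}(n,1)$ are concentrated in degree $n$ with $b^{(2)}_n(\Gamma_j)=(-1)^n\chi(\Gamma_j\backslash\IH^n_\IC)$; this is a theorem about lattices (Borel, Cheeger--Gromov proportionality), not a theorem about closed manifolds, and it is what you should cite. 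Once you replace your hand-waving Price/Bochner step by this standard fact, your Mayer--Vietoris reduction (assuming you can justify it in all degrees) would give a clean proof --- one that is arguably more elementary than the paper's and does not require the residual finiteness hypothesis, though it does not generalize beyond locally symmetric pieces in the way the paper's collapsing-plus-Price-inequality method is designed to.
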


\begin{remark}
	Because of the finite volume generalization of the Gauss-Bonnet formula (\emph{cf.} Harder \cite{Har71} and Cheeger-Gromov \cite{CG85}), we know that for any real-hyperbolic piece $(V_j, g_{\IR})$
	\begin{align}\label{harderr}
	\chi(V_j)=c(n)Vol_{g_{\IR}}(V_j),
	\end{align}
	for some non-zero constant $c(n)$ depending on the dimension only, and where $Vol_{g_{\IR}}(V_j)$ is the volume of $V_j$ computed with respect to the normalized real-hyperbolic metric. Similarly, in the complex hyperbolic case we have
	\begin{align}\label{harderc}
	\chi(W_j)=d(n)Vol_{g_{\IC}}(W_j),
	\end{align}
	for some non-zero constant $d(n)$ depending on the dimension only, and where $Vol_{g_{\IC}}(W_j)$ is the volume of $W_j$ computed with respect to the normalized complex-hyperbolic metric. By using Equations \eqref{harderr} and \eqref{harderc}, we can alternatively express the $L^2$-Betti numbers in the middle dimension of Theorem \ref{real} and Theorem \ref{complex}  in terms of the hyperbolic volume of the complex-hyperbolic pieces. 
\end{remark}

There are compelling reasons to believe that Theorem \ref{real} resolves an interesting particular case of the Singer conjecture. Moreover, we argue the general strategy of the proofs we present in this paper may play an important role towards the resolution of this conjecture for non-positively curved aspherical manifolds. Indeed, as proved by Leeb and Scott in \cite{Leeb}, any non-positively curved closed Riemannian manifold can be canonically decomposed along finitely many totally-geodesically embedded flat closed submanifolds of codimension one. The resulting pieces are either Seifert-fibered or codimension-one atoroidal (see \cite{Leeb} for the precise definitions). In other words, a general non-positively curved closed Riemannian manifold has a geometric decomposition very similar to an extended graph manifold. It is then conceivable that by equipping such manifolds with a sequence of Riemannian metrics that marginalizes the Seifert pieces and maximizes the codimension-one atroidal pieces (as we do for extended graph manifolds!), and by studying its pull-back to finite covers in conjunction with Price type arguments, one could approach the general case of the Singer conjecture for non-positively curved Riemannian manifolds. The proof of Theorem \ref{real} nicely outlines this new approach towards the Singer conjecture.

From the perspective of group theory, if $M$ is an extended graph manifold or higher graph manifold then $\pi_1(M)$ splits as a graph of groups where the vertex groups are fundamental groups of the pieces and the edge groups are the fundamental groups of the boundary components of the pieces which are not boundary components of $M$. Fern\'os-Valette used a Meyer-Vietoris argument to show if $G$ splits as a graph of groups with infinite edge groups and each vertex group $G_v$ has $b^{(2)}_1(G_v)=0$, then $b^{(2)}_1(G)=0$ \cite[Theorem 1]{FV17}. Together with the vanishing of the first $L^2$-Betti numbers of the pieces of $M$ (see \cite[Chapter 12]{LuckBook}), \cite[Theorem 1]{FV17} implies that  $b^{(2)}_1(M)=b^{(2)}_1(\pi_1(M))=0$. Using these methods, it is also possible to give algebraic proofs of Theorems \ref{real} and \ref{complex}.
 
We note that whenever a group $G$ splits as a graph of groups a formula for $b^{(2)}_1(G)$ in terms of the vertex and edge groups of the splitting is given in \cite[Theorem 1.5]{CHK20}. Also, if $\pi_1(M)$ satisfies Agol's RFRS condition (a strong form of residual finiteness), then the vanishing of $b^{(2)}_k(\pi_1(M))$ for $1\leq k\leq n$ is equivalent to the existence of finite index subgroup $\pi_1(M)$ which admits a homomorphism onto $\mathbb Z$ with the kernel of type $FP_n(\mathbb Q)$ \cite{F21}.

It is unknown whether all extended graph manifolds or all higher graph manifolds have residually finite fundamental group. See Section 12.1 in \cite{LafontBook}. We show that there are many examples which do have residually finite fundamental group, both in the extended graph manifold setting and in the pure complex-hyperbolic higher graph manifold setting.

First, we observe that a result of  Huang-Wise \cite{HuaW} implies the residual finiteness for extended graph manifolds where the pieces are all hyperbolic with \emph{virtually special} fundamental groups in the sense of Wise \cite{Wis21}. This virtually special machinery was the key to Agol's solution of the virtual Haken conjecture for 3-manifolds \cite{Ago13}, and it implies strong separability properties which are well-suited for the construction of finite covers.  Moreover, Bergeron-Haglund-Wise showed that arithmetic real-hyperbolic manifolds have virtually special fundamental group \cite{BHW11}. Hence we obtain the following.

\begin{theorem}\label{arithmetic}
	Let $M$ be an extended graph manifold with pieces $\{(V_j, g_{\IR})\}^k_{j=1}$ such that each $V_j$ is an arithmetic real-hyperbolic manifold. If $\dim_{\IR}(M)=2n$, we have
	\begin{equation*}
	b^{(2)}_{l}(M)=\begin{cases} 
	(-1)^n \chi_{\rm top}(M)=(-1)^n\sum^k_{j=1}\chi_{\rm top}(V_j)  &\mbox{if }\quad    l =  n,  \\
	0 & \mbox{if }\quad   l  \neq n.
	\end{cases}
	\end{equation*}	
    Finally, if $\dim_{\IR}(M)=2n+1$ we have
	\begin{align}\notag
	b^{(2)}_l(M)=0, \mbox{   for any  } l.
	\end{align}
\end{theorem}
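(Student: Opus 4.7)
The conclusion is an immediate consequence of Theorem \ref{real}, provided we can verify that $\pi_1(M)$ is residually finite under the standing arithmeticity hypothesis. The whole task therefore reduces to establishing this single group-theoretic property about the fundamental group of $M$.

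The plan for residual finiteness is to chain together the two results already cited in the introduction. First, the theorem of Bergeron--Haglund--Wise \cite{BHW11} produces, for each arithmetic real-hyperbolic piece $V_j$, a finite cover whose fundamental group is special in the sense of Haglund--Wise; that is, $\pi_1(V_j)$ is virtually special. If $M$ contains product pieces of the form $H\times T^r$ (with $H$ a lower-dimensional finite-volume hyperbolic manifold), then $\pi_1(H\times T^r)\cong \pi_1(H)\times\IZ^r$, and virtual specialness is inherited by the direct product with $\IZ^r$, viewed as the fundamental group of a torus Salvetti complex; this requires that the hyperbolic factor $H$ be arithmetic as well, which in the setting of this theorem is implicit in the arithmeticity of all the hyperbolic pieces. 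With every vertex group in the graph-of-groups splitting now virtually special, the theorem of Huang--Wise \cite{HuaW} applies to the Frigerio--Lafont--Sisto graph-of-groups decomposition of $\pi_1(M)$ and produces residual finiteness of $\pi_1(M)$.

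With residual finiteness secured, Theorem \ref{real} applies verbatim and yields both the middle-dimensional formula in even dimensions and the vanishing in odd dimensions, precisely as in the statement of Theorem \ref{arithmetic}. There is no genuinely new analytic or geometric content: the entire novelty is the translation of the arithmeticity hypothesis into the residual finiteness hypothesis of Theorem \ref{real} via virtual specialness. The only small subtlety worth flagging is the stability of virtual specialness under direct products with free abelian groups, needed to accommodate possible Seifert-type pieces of $M$; this is a standard consequence of combining special cube complexes by direct product.
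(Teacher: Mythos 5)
Your core logic is exactly the paper's: translate arithmeticity into virtual specialness via Bergeron--Haglund--Wise \cite{BHW11}, feed this into Huang--Wise \cite{HuaW} to get residual finiteness of $\pi_1(M)$, and then invoke Theorem \ref{real}. The paper packages this as Theorem \ref{vspecial} and Corollary \ref{arithmeticrf}.

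There are, however, two inaccuracies worth flagging. First, the digression about product pieces $H\times T^r$ should be dropped: the theorem's hypothesis is that \emph{all} pieces are the arithmetic real-hyperbolic $V_j$, so $M$ is a \emph{pure} real-hyperbolic extended graph manifold, and the paper's Theorem \ref{vspecial} is stated only in that setting. More importantly, your claim that the argument would still go through with product pieces is not supported by what is actually cited. The paper's application of Huang--Wise's Theorem 1.9 requires verifying that $\pi_1(M)$ is hyperbolic relative to its peripheral (boundary) subgroups, and this is done by applying Dahmani's combination theorem \cite{Dah03} to the graph of groups; that theorem needs each vertex group to be relatively hyperbolic with respect to the adjacent edge groups. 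A vertex group of the form $\pi_1(H)\times\IZ^r$ with $r\geq 1$ has an infinite central $\IZ^r$ and is \emph{not} relatively hyperbolic with respect to its boundary subgroups, so the combination theorem (and hence the verification of Huang--Wise's hypotheses) breaks down. Preserving virtual specialness under $\times\,\IZ^r$ is true but beside the point; the obstruction is the loss of relative hyperbolicity.

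Second, a smaller point: what the Huang--Wise theorem requires (and what the paper verifies) is that each vertex group be \emph{virtually sparse special}, not merely virtually special. For non-uniform arithmetic lattices, Bergeron--Haglund--Wise give a virtually special cube complex that is not compact, and the needed sparse finiteness property comes from Hruska--Wise \cite{HruW14}. You should cite that step rather than treat the passage from ``virtually special'' to ``applies to Huang--Wise'' as automatic.
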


Unfortunately, the virtually special machinery does not apply in the complex-hyperbolic setting. In this setting we use a different approach based on Hempel's proof of the residual finiteness of 3-manifold groups. Using this approach we present a procedure called \emph{iterated (twisted) partial doubles} which, starting with any finite volume real-hyperbolic or complex-hyperbolic manifold with at least two cusps, will construct an infinite family of distinct closed pure real-hyperbolic extended graph manifolds or pure complex-hyperbolic higher graph manifolds respectively with residually finite fundamental group.

Finally, in the last section we show that Theorem \ref{real} can be used to give a new proof of the result of  L\"uck and Lott that the Singer conjecture holds for 3-manifolds. As in \cite{LL95}, this is proved assuming the Geometrization conjecture.


\vspace{0.5 in}

\noindent\textbf{Acknowledgments}. LFDC thanks M. Anderson, G. Besson, and S. Maillot for introducing him to the theory of geometric $3$-manifolds during his time as a graduate student. He also thanks M. Stern for a stimulating ongoing collaboration on Price inequalities for harmonic forms, and for pointing out a mistake in a previous version of this paper. MH thanks D. Groves, D. B. McReynolds, and E. Einstein for helpful conversations about separability and virtually special groups. MH also thanks the University of Florida, which he visited during the preparation of this work, for its hospitality and support. Finally, the authors thank C. L\"oh for pointing out the reference \cite{Loh}, and the referees for pertinent comments on the paper.

\section{Extended Graph Manifolds and Pure Complex-Hyperbolic Higher Graph Manifolds}

In this section, we review the definition and basic properties of extended graph manifolds and higher graph manifolds.

Fix a dimension $n\geq 3$ for the underlying manifold $M^n$, an integer $k\geq 1$ giving the numbers of geometric pieces, and integers $2\leq n_i\leq n$ for any $i=1, ..., k$. For every $i$ let $V_i$ be a complete finite-volume real-hyperbolic manifold of dimension $n_i$ with toral cusps. Truncate the cusps of $V_i$ to obtain a compact manifold with torus boundaries say $\bar{V}_i$. For any $i=1, ..., k$ we define the $n$-dimensional manifold with boundary $N_i:=\bar{V}_i\times T^{n-n_i}$, where $T^{n-n_i}$ is the standard $(n-n_i)$-dimensional torus. Notice that the torus boundaries of $N_i$ have a well-defined affine structure. 
 
\begin{define}\label{geometric}[\emph{cf}. Definition 0.2. in \cite{LafontBook}]
	For $n\geq 3$ an extended graph $n$-manifold is a closed manifold built from $k\geq 1$ pieces $N_i$ as above by gluing their torus boundaries by affine diffeomorphisms.  An extended graph $n$-manifold is called \emph{pure real-hyperbolic} if all of the geometric pieces $N_i$ are real-hyperbolic (i.e., $N_i=\bar{V}_i$ for any $i=1, ..., k$).
\end{define}


It is clear that just like graph 3-manifolds, such manifolds can be nicely described by a graph. Each vertex is one of the geometric pieces $V_i$, and each edge is the collar of the torus boundary that is obtained by gluing two geometric pieces via an affine diffeomorphism.

\begin{define}[Section 2 in \cite{Hempel}, Definitions 2.12 \& 2.13 in \cite{LafontBook}]\label{graphofspaces}
Let $\Gamma$ be a graph with vertex set $V$ and edge set $E$. Let $\{X_v\}_{v\in V}$ and $\{X_e\}_{e\in E}$ be a collection of topological spaces and let $f_{e, v}\colon X_e\to X_v$ be a $\pi_1$-injective embedding whenever $v$ is an endpoint of $e$. Let $X$ be space constructed by gluing the copy of $X_e$ in $X_v$ to the copy of $X_e$ in $X_u$ via $f_{e, v}^{-1}\circ f_{e, u}$ whenever $u$ and $v$ are the endpoints of $e$ in $\Gamma$. Note that we may have $u=v$ but with distinct attaching maps. The space $X$ is a \emph{graph of spaces} with underlying graph $\Gamma$. We will often identify the vertex spaces $X_v$ and the edge spaces $X_e$ with their image in $X$. Without changing the homotopy type of $X$, we can also assume that each edge space $X_e$ has a neighborhood homeomorphic to $X_e\times (0, 1)$.

When the underlying graph is a tree $T$, then $X$ is called a \emph{tree of spaces}. In this case, in order to keep our terminology consistent with the results in \cite{LafontBook}], we will equip $X$ with a map $p\colon X\to T$ which maps each vertex space to the corresponding vertex of $T$ such that for any edge $e\in T$ and $t$ in the internal part $e^{\circ}$ of $e$, $X_e\cong p^{-1}(t)$ and $p^{-1}(e^{\circ})$ is homeomorphic to $X_e\times (0, 1)$. If $X$ is equipped with a Riemannian metric we say that an \emph{internal wall} of $X$ corresponding to $e\in T$ is the closure $p^{-1}(e^{\circ})$. Similarly, a \emph{chamber} $C\subset X$ is the pre-image of a vertex of $T$. Two distinct chambers are said to be \emph{adjacent} if the corresponding vertices in $T$ are joined by an edge. Finally, a wall $W$ is adjacent to the chamber $C$ if $C\cap W\neq \emptyset$.
\end{define}

\begin{remark}
An extended graph $n$-manifold as in Definition \ref{geometric} is a graph of spaces where the vertex spaces are the geometric pieces and the edges spaces are components of the boundary of these pieces. The universal cover $\widetilde{M}$ is a tree of spaces $(\widetilde{M}, p, T)$, where for any edge $e$ we have that $p^{-1}(e^{\circ})$ is diffeomorphic to $\IR^{n-1}\times (0, 1)$. For this it is crucial we restrict ourselves to affine diffeomorphism of the boundaries. The main issue here is that one could take connected sums of the edges with exotic spheres; see Remark 2.5. in \cite{LafontBook} for more details. 
\end{remark}

Next, we observe that  $\pi_{1}(\bar{N}_i)$ injects in the fundamental group of $\pi_1(M)$, where $\pi_{1}(M)$ is isomorphic to the graph of groups corresponding to the geometric decomposition of $M$, see \cite{Hempel} and Section 2.3 in \cite{LafontBook}. Let $\widetilde{N}_i$ be the universal cover of a piece $\bar{N}_i$. We clearly have
\[
\widetilde{N}_i=B_i\times \IR^{n-n_i}
\]
where $B_i$ is the complement in $\IH^{n_i}_{\IR}$ of an equivariant family of disjoint open \emph{horoballs}. Putting a rough metric on $M$ as in \cite{LafontBook} we then get the following result for the topological universal cover $\pi:\widetilde{M}\to M$.

\begin{corollary}\label{rough}
	$M$ admits a Riemannian metric such that $\widetilde{M}$ can be turned into a tree of spaces $(\widetilde{M}, \tilde{p}, \widetilde{T})$ such that:
	\begin{itemize}
		\item If $C$ is a chamber of $\widetilde{M}$, then $(C, \tilde{g}_C)$ is isometric to $B\times\IR^{k}$ where the Euclidean factor is equipped with a flat metric, and where $B$ is the complement of open horoballs in $(\IH^{n-k}_{\IR}, g_{\IR})$.
		\item If $W$ is an internal wall of $\widetilde{M}$, then $W$ is diffeomorphic to $\IR^{n-1}\times [-1, 1]$. 
	\end{itemize}
\end{corollary}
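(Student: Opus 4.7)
The plan is to realize this as a direct consequence of the ``rough metric'' construction of Frigerio-Lafont-Sisto \cite{LafontBook}, together with a standard analysis of the universal cover of a graph of spaces whose vertex and edge inclusions are $\pi_1$-injective. First I would put a preferred metric on each piece $\bar N_i = \bar V_i \times T^{n-n_i}$: equip $\bar V_i$ with the complete finite-volume real-hyperbolic metric truncated along horospherical cross-sections (so that each boundary component of $\bar V_i$ is a flat torus with the induced horospherical metric), equip $T^{n-n_i}$ with a flat metric, and take the product. Each boundary component of $\bar N_i$ is then a flat $(n-1)$-torus, and the piece carries a product Riemannian metric whose lift to the universal cover is exactly $B_i\times\IR^{n-n_i}$ with $B_i\subset(\IH^{n_i}_\IR,g_\IR)$ the complement of an equivariant family of open horoballs.

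Next I would account for the fact that paired boundary tori are identified only by affine diffeomorphisms, so their induced flat metrics generically do not match. Between each pair of glued boundary tori I would insert a collar $T^{n-1}\times[-1,1]$ carrying a smooth one-parameter family of flat metrics on the $T^{n-1}$-factor interpolating between the two ambient flat structures and agreeing with them on the ends. Gluing these collars to the $\bar N_i$'s produces a closed Riemannian manifold diffeomorphic to $M$, where the $\bar N_i$'s are the chambers and the collars are the internal walls.

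Finally I would pass to the universal cover $\pi\colon\widetilde M\to M$. Since in the graph of spaces description of $M$ the inclusions of the pieces and of the boundary tori are $\pi_1$-injective (as recorded in Section 2.3 of \cite{LafontBook}), the Bass-Serre tree $\widetilde T$ of the associated graph of groups gives a natural map $\tilde p\colon\widetilde M\to\widetilde T$ exhibiting $\widetilde M$ as a tree of spaces over $\widetilde T$. Each preimage of a vertex is a connected component of the preimage of some piece $\bar N_i$, which is its universal cover $B_i\times\IR^{n-n_i}$ with the product metric described above, and each preimage of an open edge is homeomorphic to the universal cover of an internal wall $T^{n-1}\times[-1,1]$, namely $\IR^{n-1}\times[-1,1]$. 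The main subtlety — and really the only one — is checking that the interpolation across the collars can be performed while keeping the flat structures consistent with the affine gluings, so that the lifted wall is globally $\IR^{n-1}\times[-1,1]$ rather than some twisted object; this is exactly the reason the definition restricts to affine gluings and is handled in \cite{LafontBook}, so no new argument is required.
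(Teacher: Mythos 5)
Your proposal is correct and takes essentially the same route as the paper: the paper's argument for Corollary \ref{rough} is simply to observe that $\widetilde{N}_i = B_i\times\IR^{n-n_i}$ and then cite the ``rough metric'' construction of Frigerio--Lafont--Sisto \cite{LafontBook}, which is precisely the metric you describe (locally symmetric product metric on each piece, flat interpolating collars between glued tori), together with the Bass--Serre tree of the underlying graph of groups to produce $(\widetilde{M},\tilde p,\widetilde T)$. Your write-up just unpacks that citation.
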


Corollary \ref{rough} tells us that the universal cover of any extended graph manifold is naturally divided into chambers equipped with locally symmetric metrics, either real-hyperbolic or modeled on the product of flat and real-hyperbolic geometries. These chambers are connected via walls diffeomorphic to $\IR^{n-1}\times [-1, 1]$. 

Similar results hold for pure complex-hyperbolic higher graph manifolds, and one can readily adapt the approach given in Chapter 2 of \cite{LafontBook}. The results we state below are somewhat implicit in \cite{CSS19}.

Fix an integer $n\geq 2$ for the underlying manifold $M^{2n}$, an integer $k\geq 1$ giving the numbers of complex-hyperbolic pieces of complex dimension $n$. For every $i$ let $W_i$ be a complete finite-volume complex-hyperbolic manifold of complex dimension $n$ with nilmanifold cusps. In other words, we have that $W_i=\Gamma_i \backslash \IH^{n}_{\IC}$ where $\Gamma_i\leq \textrm{PU}(n, 1)$ is a non-uniform torsion-free lattice whose parabolic elements have no rotational part, see Section 2 in \cite{DD17} for more details. Truncate the cusps of $W_i$ to obtain a compact manifold with nilmanifold boundaries, say $\bar{W}_i$. For any $i=1, ..., k$ we define the $2n$-dimensional manifold with boundary $M_i:=\bar{W}_i$. Notice that the nilmanifold boundaries of $M_i$ have a well-defined affine structure, see Sections 4.2.1 and 4.2.2. in \cite{Gol99}. Indeed, each horosphere in $\IH^{n}_{\IC}$ has a geometric structure modeled on the $(2n-1)$-Heisenberg group $\mathcal{H}^{2n-1}$ and the nilmanifold cross sections of the cusps are simply quotients of $\mathcal{H}^{2n-1}$ via lattices of translations. 

\begin{define}\label{geometric2}[\emph{cf}. Definition 1. in \cite{CSS19}]
	For $n\geq 2$ a pure complex-hyperbolic higher graph $2n$-manifold is a closed manifold built from $k\geq 1$ pieces $M_i$ as above by gluing their nilmanifold boundaries with affine diffeomorphisms.  
\end{define}

Similarly to the case of extended graph manifolds, we observe that  $\pi_1(W_i)=\pi_{1}(\bar{M}_i)$ injects in the fundamental group of $\pi_1(M)$, where $\pi_{1}(M)$ is isomorphic to the graph of groups corresponding to the geometric decomposition of $M$, see again \cite{Hempel} and Section 2.3 in \cite{LafontBook}. Let $\widetilde{M}_i$ be the universal cover of a piece $\bar{M}_i$. We clearly have
\[
\widetilde{M}_i=B_i
\]
where $B_i$ is the complement in $\IH^{n}_{\IC}$ of an equivariant family of disjoint open \emph{horoballs}. Putting a rough metric on $M$ as in Corollary \ref{rough}, we then get the following result for the topological universal cover $\pi:\widetilde{M}\to M$.

\begin{corollary}\label{roughc}
	$M$ admits a Riemannian metric such that $\widetilde{M}$ can be turned into a tree of spaces $(\widetilde{M}, \tilde{p}, \widetilde{T})$ such that:
	\begin{itemize}
		\item If $C$ is a chamber of $\widetilde{M}$, then $(C, \tilde{g}_C)$ is isometric to $B$, where $B$ is the complement of open horoballs in $(\IH^{n}_{\IC}, g_{\IC})$.
		\item If $W$ is an internal wall of $\widetilde{M}$, then $W$ is diffeomorphic to $\mathcal{H}^{2n-1}\times [-1, 1]$, where $\mathcal{H}^{2n-1}$ is the $(2n-1)$-dimensional Heisenberg Lie group. 
		\end{itemize}
\end{corollary}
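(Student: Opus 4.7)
The plan is to mirror the construction used to establish Corollary \ref{rough} in Chapter 2 of \cite{LafontBook}, replacing real-hyperbolic geometry with complex-hyperbolic geometry and torus boundary components with nilmanifold boundary components. First, I would equip $M$ with a rough Riemannian metric as follows: on each truncated piece $\bar{W}_i$, use (the restriction of) the complex-hyperbolic metric $g_\IC$ coming from $W_i = \Gamma_i\backslash \IH^n_\IC$; this metric descends to the nilmanifold boundary component as a Riemannian nilmanifold whose universal cover is the Heisenberg group $\mathcal{H}^{2n-1}$ with a left-invariant metric. At each gluing, the two boundary nilmanifolds are identified by an affine diffeomorphism (in the sense of Sections 4.2.1 and 4.2.2 of \cite{Gol99}), so we may choose a thin collar neighborhood of each gluing locus on which the metric is a product nilmanifold $\times [-1,1]$, interpolating smoothly into the cusp metric on each adjacent piece.

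Second, I would pass to the universal cover using the graph of groups / graph of spaces structure in Definition \ref{graphofspaces}. As in the extended graph manifold case (Section 2.3 of \cite{LafontBook}), the $\pi_1$-injectivity of each boundary nilmanifold into its piece $\bar{M}_i$ — which holds because each cusp cross-section of a complete finite-volume negatively curved manifold is $\pi_1$-injective — combined with the $\pi_1$-injectivity of each piece into $M$ (standard Bass–Serre theory applied to the graph of spaces) implies that $\pi_1(M)$ splits as the graph of groups dual to the decomposition of $M$. The Bass–Serre tree $\widetilde{T}$ of this splitting then carries a natural map $\tilde{p}\colon \widetilde{M}\to\widetilde{T}$ whose fibers over vertices (resp.\ open edges) are lifts of the interior of pieces (resp.\ collars of gluing nilmanifolds), giving $\widetilde{M}$ the structure of a tree of spaces over $\widetilde{T}$.

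Third, I would verify the two bullet points. A chamber $C = \tilde{p}^{-1}(v)$ is, by the $\pi_1$-injectivity above and the standard covering theory for graphs of spaces, isometric to the universal Riemannian cover of the corresponding piece $\bar{M}_i = \bar{W}_i$ equipped with its rough metric. Since $\bar{W}_i$ is obtained by truncating horoball neighborhoods of the cusps of $W_i = \Gamma_i\backslash\IH^n_\IC$ and the metric on it is exactly $g_\IC$, its universal cover is the complement in $(\IH^n_\IC, g_\IC)$ of the $\Gamma_i$-equivariant family of open horoballs corresponding to the cusps of $W_i$; this is precisely the set $B_i$ (equivalently, $B$) in the statement. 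An internal wall $W = \overline{\tilde{p}^{-1}(e^\circ)}$ is a lift of a collar of a boundary nilmanifold of some $\bar{M}_i$; since that collar is diffeomorphic (and in fact isometric, in the collar region) to a product of a nilmanifold with $[-1,1]$, its universal cover is diffeomorphic to $\mathcal{H}^{2n-1}\times[-1,1]$.

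The main obstacle is verifying the two $\pi_1$-injectivity statements used above, since they are what make the Bass–Serre description of $\widetilde{M}$ genuinely work and hence what pins down the chamber/wall description. The injectivity of each boundary nilmanifold into its piece follows from the cusp geometry of complex-hyperbolic manifolds (each cusp is a $K(\pi,1)$ with $\pi_1$-injective horospherical cross-section). The injectivity of each piece into $M$ is then an application of Bass–Serre theory for a graph of groups with $\pi_1$-injective edge maps, exactly as in the extended graph manifold setting treated in \cite{LafontBook}. Once these two ingredients are in place, the identification of chambers with $B$ and walls with $\mathcal{H}^{2n-1}\times[-1,1]$ is direct.
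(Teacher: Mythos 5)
Your proposal is correct and follows essentially the same approach as the paper, which simply adapts the construction of the rough metric and tree-of-spaces structure from Chapter~2 of \cite{LafontBook} (used for Corollary~\ref{rough}) to the complex-hyperbolic setting, relying on the same $\pi_1$-injectivity facts and the identification of the universal cover of each truncated piece $\bar{W}_i$ with a horoball complement $B_i\subset\IH^n_{\IC}$. The only difference is that the paper states this very tersely as a direct analogue, whereas you spell out the collar/interpolation step and the covering-space argument explicitly; both are consistent.
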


\section{Dimension Estimates}\label{Pricepackage}

In this section, we repackage the Price inequality estimates for harmonic forms proved by the first author and M. Stern in \cite{DS17}, \cite{DS19}, and \cite{DS20}. The statements presented here are tailored to the study of higher graph manifolds, and they may slightly differ from the presentation in the original sources.

Let $(M^n, g)$ be a closed Riemannian manifold, and denote by $\mathcal{H}^k_{g}(M)$ the finite dimensional vector space of harmonic $k$-forms. By the Hodge-de Rham theorem, we know that
\[
b_k(M)=\dim_{\IR}\mathcal{H}^k_{g}(M)
\]
where $b_{k}(M)$ is the $k$-th topological Betti number of $M$. 

Let $K(\cdot,\cdot)$ denote the Schwartz kernel for the $L^2$ orthogonal projection onto $\mathcal{H}^{k}_{g}(M)$. Thus for  $x, y \in M$, we have  
\[
K(x, y) \in Hom( \Omega^{k}T^{*}_{y}M , \Omega^{k}T^{*}_{x}M).
\]
Given an $L^2$-orthonormal basis $\{\alpha_j\}_{j=1}^l$ for $\mathcal{H}^{k}_{g}(M)$, we have 
\begin{align}\label{KMap}
K(x, y) =\sum^{l}_{j=1}\alpha_{j}(x)\langle\cdot, \alpha_{j}(y)\rangle,
\end{align}
so that 
\begin{align}\label{Ktrace}
Tr K(x, x)=\sum^{\binom{n}{k}}_{i=1}\langle K(x, x)(e_{i}), e_{i}\rangle=\sum^{b_k(M)}_{i=1}|\alpha_{i}|^{2}_{x}\geq 0,
\end{align}
and 
\begin{align}\label{integral}
\int_{M}TrK(x, x)d\mu_g=b_{k}(M).
\end{align}

Because of Equation \eqref{integral}, we give the following definition.

\begin{definitions}\label{density}
	Given $(M, g)$, $\mathcal{H}^k_{g}(M)$, and an $L^2$-orthonormal basis $\{\alpha_j\}_{j=1}^{b_k(M)}$ for $\mathcal{H}^{k}_{g}(M)$, we define the pointwise $k$-th Betti number density to be
	\[
	\rho_{k}(x):=TrK(x, x),
	\]
	for any $x\in M$.
\end{definitions}

Next, we collect a linear algebra type pointwise estimate for the $k$-th Betti number density.

\begin{lemma}\label{keyestimate}
	Given  $(M, g)$ and $\mathcal{H}^k_{g}(M)$ as above, for any $x\in M$ we have
	\[
	0\leq \rho_{k}(x)\leq \binom{n}{k}\sup_{\alpha\in\mathcal{H}^{k}_{g}(M):||\alpha||^{2}_{L^{2}}=1}|\alpha(x)|^{2}.
	\] 
\end{lemma}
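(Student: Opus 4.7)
The plan is to view $\rho_k(x)$ as the trace of a finite-rank positive operator, but to compute that trace on the space $\Lambda^k T^*_x M$ of dimension $\binom{n}{k}$ rather than on $\mathcal{H}^k_g(M)$. The naive termwise bound $\rho_k(x)\le b_k(M)\sup_{\|\alpha\|_{L^2}=1}|\alpha(x)|^2$ is too weak, because $b_k(M)$ can far exceed $\binom{n}{k}$; swapping the two sides of the trace replaces $b_k(M)$ by the desired dimensional constant.

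First, fix $x\in M$ and consider the evaluation map
\[
\mathrm{ev}_x\colon \mathcal{H}^k_g(M)\longrightarrow \Lambda^k T^*_x M,\qquad \alpha\mapsto \alpha(x),
\]
where the domain carries the $L^2$ inner product and the codomain carries the inner product induced by $g_x$. Since $\{\alpha_j\}_{j=1}^{b_k(M)}$ is an $L^2$-orthonormal basis of $\mathcal{H}^k_g(M)$,
\[
\rho_k(x)=\sum_{j=1}^{b_k(M)}|\alpha_j(x)|^2 = \sum_{j=1}^{b_k(M)} \langle \mathrm{ev}_x^*\mathrm{ev}_x\,\alpha_j,\alpha_j\rangle_{L^2} = \mathrm{tr}_{\mathcal{H}^k_g(M)}\bigl(\mathrm{ev}_x^*\mathrm{ev}_x\bigr),
\]
which is manifestly non-negative, giving the left-hand inequality.

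Second, I would apply the cyclicity of the trace. Both operators in question are finite-rank, so
\[
\mathrm{tr}_{\mathcal{H}^k_g(M)}\bigl(\mathrm{ev}_x^*\mathrm{ev}_x\bigr)=\mathrm{tr}_{\Lambda^k T^*_x M}\bigl(\mathrm{ev}_x\,\mathrm{ev}_x^*\bigr).
\]
The right-hand operator is self-adjoint and positive semi-definite on a space of dimension $\binom{n}{k}$, so its trace is bounded by $\binom{n}{k}$ times its operator norm. Using $\|\mathrm{ev}_x\mathrm{ev}_x^*\|_{\mathrm{op}}=\|\mathrm{ev}_x\|_{\mathrm{op}}^2$ together with the identification
\[
\|\mathrm{ev}_x\|_{\mathrm{op}} \;=\; \sup_{\alpha\in\mathcal{H}^k_g(M),\ \|\alpha\|_{L^2}=1} |\alpha(x)|,
\]
which is immediate from the definition of the operator norm, yields the claimed inequality.

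The main obstacle here is conceptual rather than technical: one must recognize that estimating the sum $\sum_j |\alpha_j(x)|^2$ term by term loses information, and that the sharper dimensional constant $\binom{n}{k}$ arises only after swapping the two sides of the trace. Everything else is routine because $\mathcal{H}^k_g(M)$ is finite-dimensional on a closed Riemannian manifold, so no functional-analytic subtleties about trace class or boundedness arise.
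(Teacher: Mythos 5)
Your proof is correct. The trace-cyclicity packaging — passing from $\mathrm{tr}(\mathrm{ev}_x^*\mathrm{ev}_x)$ on $\mathcal{H}^k_g(M)$ to $\mathrm{tr}(\mathrm{ev}_x\mathrm{ev}_x^*)$ on the $\binom{n}{k}$-dimensional space $\Lambda^k T^*_xM$, then bounding the trace by $\dim\times\|\cdot\|_{\mathrm{op}}$ — is the same argument as the cited Lemma 14 of \cite{DS19}, which bounds each diagonal entry $\langle K(x,x)e_i,e_i\rangle$ by the sup over unit-norm harmonic forms and sums over the $\binom{n}{k}$ basis vectors; indeed $K(x,x)=\mathrm{ev}_x\mathrm{ev}_x^*$, so the two are identical computations expressed in different languages.
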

\begin{proof}
See Lemma 14 in \cite{DS19}.
\end{proof}

The following lemma is an application of the usual elliptic regularity and Moser iteration for harmonic forms in bounded geometry. 

\begin{lemma}\label{estimate}
	Given $(M^n, g)$ and $\mathcal{H}^k_{g}(M)$ as above, if the sectional curvature is bounded 
	\[
	|sec_g|\leq a
	\]
	for some $a\geq 0$, then there exists a constant $c=c(n, a, k, \inj_g(M, x))>0$ such that for any $x\in M$
	\[
	0\leq\rho_{k}(x)\leq c,
	\]
	where $\inj_g(M, x)$ is the injectivity radius at $x$.
\end{lemma}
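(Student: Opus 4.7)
The plan is to combine the linear-algebra bound of Lemma \ref{keyestimate} with a standard mean-value estimate for harmonic forms, obtained via the Bochner--Weitzenb\"ock formula and Moser iteration on a small geodesic ball of bounded geometry.

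First, by Lemma \ref{keyestimate}, it suffices to produce a pointwise bound
\[
|\alpha(x)|^2 \leq C(n,a,k,\inj_g(M,x))
\]
that is uniform over all $\alpha\in\mathcal{H}^k_g(M)$ with $\|\alpha\|_{L^2}^2=1$. To this end, I would recall the Bochner--Weitzenb\"ock identity, which for a harmonic $k$-form gives $\nabla^*\nabla \alpha = -\mathcal{R}_k(\alpha)$, where $\mathcal{R}_k$ is a universal algebraic curvature operator on $\Lambda^k T^*M$ built from $R$ and $\mathrm{Ric}$. The hypothesis $|\sec_g|\leq a$ controls the full Riemann tensor and hence $|\mathcal{R}_k|\leq C_1(n,k)a$ pointwise. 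A Kato-type computation then yields the distributional differential inequality
\[
\Delta_{\mathrm{scalar}}\,|\alpha|^2 \;\leq\; C_2(n,k,a)\,|\alpha|^2
\]
(in the geometer's convention making $\Delta$ non-negative on functions), so that $|\alpha|^2$ is a subsolution to a linear elliptic equation with bounded zeroth-order coefficient.

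Second, I would run Moser iteration on the geodesic ball $B:=B_g(x,r)$ for any fixed $r<\inj_g(M,x)$. Under $|\sec_g|\leq a$ and $r<\inj_g(M,x)$, Rauch/Bishop--Gromov comparison give two-sided control on $\mathrm{Vol}_g(B)$ and a Sobolev inequality on $B$ with constant depending only on $n$, $a$, and $r$. Applying Moser iteration to the subsolution $|\alpha|^2$ then produces
\[
|\alpha(x)|^2 \;\leq\; C_3(n,a,k,r)\int_{B}|\alpha|^2\, d\mu_g \;\leq\; C_3(n,a,k,r)\int_M |\alpha|^2\, d\mu_g \;=\; C_3(n,a,k,r).
\]
Choosing, e.g., $r=\tfrac{1}{2}\inj_g(M,x)$ gives a constant depending only on $n$, $a$, $k$, and $\inj_g(M,x)$. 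Inserting this into Lemma \ref{keyestimate} yields the desired bound with $c=\binom{n}{k}\,C_3$; nonnegativity of $\rho_k$ was already noted in \eqref{Ktrace}.

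The main technical point is the uniform control of the Sobolev/Poincar\'e constants appearing in the Moser iteration in terms of $(n,a,\inj_g(M,x))$ only. This is precisely the content of ``harmonic analysis in bounded geometry'': a two-sided sectional curvature bound together with a lower injectivity radius bound places the ball $B$ in a class of manifolds with uniform Sobolev embedding, and standard references (e.g.\ the Hebey--Herzlich type estimates used in \cite{DS19}) provide the explicit dependence. Once this is granted, the rest of the argument is a textbook application of the Bochner method and elliptic regularity.
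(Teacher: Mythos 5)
Your argument is correct and reconstructs precisely the proof strategy the paper attributes to Lemma 16 of \cite{DS19}: the paper itself describes the result as ``an application of the usual elliptic regularity and Moser iteration for harmonic forms in bounded geometry,'' and you supply exactly that, combining the Bochner--Weitzenb\"ock differential inequality for $|\alpha|^2$ with Moser iteration on a geodesic ball of radius controlled by $\inj_g(M,x)$ and then invoking Lemma \ref{keyestimate}. No gaps; this matches the intended route.
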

\begin{proof}
	See Lemma 16 in \cite{DS19}. 
\end{proof}

Finally, we collect a couple of estimates for the pointwise Betti number density at points in closed Riemannian manifolds which possess geodesic balls centered around them isometric to large geodesic balls in the real- and complex-hyperbolic space. We start with the real-hyperbolic case.

\begin{lemma}\label{Restimate}
	Let $(M^n, g)$ and $\mathcal{H}^k_{g}(M)$ be as above. For any $R>1$ and $x\in M$ such that $B_g(x, R)$ is isometric to a ball $B_{g_{\IR}}(-, R)$ in the real-hyperbolic space form $(\IH^n_{\IR}, g_{\IR})$ (with sectional curvature normalized so that $sec_{g_{\IR}}=-1$), we have
	\begin{equation}\label{R1}
	0\leq\rho_k(x)\leq  \;     \begin{cases} 
	C_1(n, k)e^{-(n-1-2k)R} &\mbox{if }\quad    n-1-2k > 0 ,  \\
	C_{2}(n, k)R^{-1} & \mbox{if }\quad   n-1-2k = 0,
	\end{cases}
	\end{equation}
	where $C_{i}(n, k)>0$, $i=1, 2$, depend on $n$ and $k$ only.
\end{lemma}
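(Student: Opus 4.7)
The plan is to reduce the pointwise bound on $\rho_k(x)$ to a Price-type $L^2$ mass decay estimate for harmonic forms on real-hyperbolic geodesic balls. First, by Lemma \ref{keyestimate} it suffices to bound $|\alpha(x)|^2$ uniformly for every unit $L^2$-norm harmonic $k$-form $\alpha\in\mathcal{H}^k_g(M)$. Since $B_g(x,R)$ is isometric to a geodesic ball in $(\IH^n_\IR,g_\IR)$, the geometry on, say, the unit sub-ball $B_g(x,1)$ is as controlled as possible (the computation underlying Lemma \ref{estimate} applies here). Thus standard elliptic regularity and Moser iteration applied to the Bochner inequality $\tfrac{1}{2}\Delta|\alpha|^2 + |\nabla\alpha|^2 = -\langle\mathcal{R}_k\alpha,\alpha\rangle$ yield a constant $C(n,k)$ with
\[
|\alpha(x)|^2 \leq C(n,k)\int_{B_g(x,1)}|\alpha|^2\, d\mu_g.
\]

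The heart of the argument is to invoke the Price-type inequalities of the first author and Stern in \cite{DS17, DS19, DS20} to show that this local $L^2$ mass decays at the advertised rate as $R$ grows. On $(\IH^n_\IR, g_\IR)$ the Weitzenb\"ock curvature operator $\mathcal{R}_k$ acting on $k$-forms supplies a positive zeroth-order term proportional to $(n-1-2k)^2$ whenever $n-1-2k>0$. Combining this positivity with a radial cutoff and integration by parts against the exponentially growing hyperbolic volume element produces a differential inequality for $f(r):=\int_{B_g(x,r)}|\alpha|^2\,d\mu_g$ whose integration from $r=1$ out to $r=R$ gives
\[
f(1)\, \leq\, C_1(n,k)\,e^{-(n-1-2k)R}\, f(R)\, \leq\, C_1(n,k)\,e^{-(n-1-2k)R},
\]
using $f(R)\leq\|\alpha\|_{L^2(M)}^2=1$. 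Inserting this into the Moser step produces the exponential case of \eqref{R1}.

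In the borderline case $n-1-2k=0$, the positive Weitzenb\"ock contribution degenerates and the exponential mechanism is lost. In this regime I would apply the same Price framework: the boundary-term part of the Rellich/Agmon identity still survives, but now only in a form that integrates to polynomial decay $f(1)\leq C_2(n,k)R^{-1}$, along the lines of the computations in \cite{DS19, DS20}. Substituting into the Moser estimate yields the second bound in \eqref{R1}. The main obstacle is extracting the sharp exponent $(n-1-2k)$ in the exponential rate (as well as the precise $R^{-1}$ behavior on the threshold), which requires that the Weitzenb\"ock constant on hyperbolic $k$-forms be tracked carefully through the integration by parts; fortunately this is exactly the content of the Price inequalities proved in \cite{DS17, DS19, DS20}, so the argument amounts to assembling those results in the correct order rather than reproving them.
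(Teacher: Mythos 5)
Your high-level strategy---reduce via Lemma \ref{keyestimate} to a sup bound on unit $L^2$ harmonic forms, use Moser iteration on the hyperbolic sub-ball to control $|\alpha(x)|^2$ by $\int_{B_g(x,1)}|\alpha|^2$, and then invoke the Price machinery of \cite{DS17} to get exponential decay of that local $L^2$ mass---matches the skeleton of the argument. The paper's own proof is essentially a pointer: it cites Theorems 87, 96 and Corollary 108 of \cite{DS17} (for $\epsilon$-pinched negative curvature) and says to set $\epsilon=0$. So both your proposal and the paper are, at bottom, appeals to the same Price-inequality results.

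That said, your sketch of the mechanism behind the exponent $(n-1-2k)$ contains a genuine error. On $(\IH^n_\IR,g_\IR)$ with $sec_{g_\IR}=-1$, the Weitzenb\"ock curvature operator on $k$-forms is $\mathcal{R}_k = -k(n-k)\,\mathrm{Id}$, which is \emph{negative} for $0<k<n$ and is not proportional to $(n-1-2k)^2$; so the Bochner term $-\langle\mathcal{R}_k\alpha,\alpha\rangle$ does not supply the positivity you attribute to it, and indeed the sign is exactly why the naive Bochner vanishing argument fails for hyperbolic $k$-forms. In the actual Price (Rellich/Donnelly--Xavier type) inequality, the rate $(n-1-2k)$ comes from the eigenvalues of the second fundamental form of geodesic spheres (equivalently horospheres): when the Rellich identity is tested against the radial vector field, the shape operator splits $k$-forms into radial and tangential parts whose weights differ by precisely $(n-1)-2k$, and it is this gap---not the curvature endomorphism---that drives the exponential decay of $f(r)=\int_{B_g(x,r)}|\alpha|^2$, with the critical degree $n-1-2k=0$ collapsing to the stated $R^{-1}$ rate. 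As written, the positivity step in your sketch would not close without this correction; your final conclusion is nonetheless salvaged because, like the paper, you ultimately defer to the precise statements in \cite{DS17,DS19,DS20} rather than rederiving them.
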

\begin{proof}
	 The lemma is a reformulation, tailored to our present purposes, of the estimates for negatively $\epsilon$-pinched Riemannian manifolds originally proved in Theorems 87, 96, and Corollary 108 of \cite{DS17}. Set $\epsilon= 0$ in these estimates to extract the results needed for the proof of this lemma. 
\end{proof}

In the complex-hyperbolic case we have the following.

\begin{lemma}\label{Cestimate}
	Let $(M^{2n}, g)$ and $\mathcal{H}^k_{g}(M)$ be as above. For any $R>1$ and $x\in M$ such that $B_g(x, R)$ is isometric to a ball $B_{g_{\IC}}(-, R)$ in the complex-hyperbolic space form $(\IH^n_{\IC}, g_{\IC})$ (with sectional curvature normalized so that $-4\leq sec_{g_{\IC}}\leq-1$), we have
	\begin{equation}\label{C1}
	0\leq\rho_k(x)\leq  \;     \begin{cases} 
	D_1(n, k)e^{-2(n-1-k)R} &\mbox{if }\quad    n-1-k > 0 ,  \\
	D_{2}(n, k)R^{-1} & \mbox{if }\quad   n-1-k = 0,
	\end{cases}
	\end{equation}
	where $D_{i}(n, k)>0$, $i=1, 2$ depends on the $n$ and $k$ only.
\end{lemma}
\begin{proof}
	For this lemma, we need to go beyond the estimates proved in \cite{DS17} for negatively curved and pinched Riemannian manifolds. 
	Indeed, the estimate in \eqref{C1} crucially relies upon the fact that geodesic spheres in complex-hyperbolic space have constant mean curvature. For the details see Section 3 in \cite{DS20} and in particular Lemma 25 and Lemma 28 therein. 
\end{proof}

\section{Cheeger-Gromov-Fukaya Theory and Price Inequalities: Proofs of Theorems \ref{real} and \ref{complex}}

In the paper \cite{CSS19}, Connell-Su\'arez-Serrato compute exactly the minimal volume of extended graph manifolds (see Theorem 4 in \cite{CSS19}), and among other things are able to estimate from above and below the minimal volume of \emph{any} higher graph manifolds (see Theorem 3 in \cite{CSS19}). An important ingredient in their analysis is the construction of a particular sequence of metrics on any given higher graph manifold. This sequence of metrics is identically equal to the locally symmetric hyperbolic metrics on larger and larger portions of the pure hyperbolic pieces, and it collapses with bounded curvature the non-pure pieces and the gluing regions. The construction of such sequence of metrics relies on the general theory of collapsing developed by Cheeger-Gromov \cite{CG86} and a crucial subsequent refinement of Fukaya \cite{Fuk89}.
The following lemma is proven in Section 2, pages 1287 - 1292, of \cite{CSS19}. We prefer to collect as a lemma a particular case of the construction of Connell-Su\'arez-Serrato when specialized to extended graph manifolds (as defined in \cite{LafontBook}). For a very detailed and elementary proof of this lemma in the $3$-dimensional case, we also refer the interested reader to the Ph.D. thesis of Pieroni \cite{Pieroni}. 

\begin{lemma}[Connell-Su\'arez-Serrato]\label{cheeger}
	Let $M$ be an extended graph manifold with $k\geq 1$ real-hyperbolic pieces $\{(V_j, g_{\IR})\}^k_{j=1}$. There exists a sequence of metrics $\{g_m\}$ on $M$ such that:
	\begin{itemize}
		\item $\lim_{m\to\infty}Vol_{g_m}(M)=\sum^k_{j=1}Vol_{g_{\IR}}(V_j)$;
		\item $|sec_{g_m}|\leq 1+\delta_m$, where $\lim_{m\to\infty}\delta_m=0$;
		\item $g_m=g_{\IR}$ on each geometric piece $V_j$ up to height $m$ on each of its cusps. In particular, $\lim_{m\to\infty}diam_{g_m}(M)=\infty$.
	\end{itemize}
\end{lemma}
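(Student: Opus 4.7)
The plan is to build $g_m$ by keeping the locally symmetric metric intact on increasingly large subsets of the pure hyperbolic chambers and collapsing everything else with uniformly bounded curvature. For each pure hyperbolic piece $V_j$, let $V_j^{(m)}\subset V_j$ denote the complement of horoball neighborhoods of its cusps truncated at height $m$; on $V_j^{(m)}$ I would simply declare $g_m = g_{\IR}$. The remainder of $M$ --- the cusp tips of pure pieces beyond level $m$, the non-pure product pieces $\bar V_i \times T^{n-n_i}$, and a collar neighborhood of each gluing torus --- is where the collapsing will happen.

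On this remainder there is a natural $F$-structure of positive rank in the sense of Cheeger--Gromov \cite{CG86}: the cusp tips carry the horospherical $T^{n_j-1}$-symmetry, the product pieces admit the global $T^{n-n_i}$-action coming from the torus factor, and the gluing collars inherit affine torus symmetries from the boundary identifications. Because all gluings in Definition \ref{geometric} are affine diffeomorphisms of flat tori, these local $F$-structures assemble compatibly across the interfaces between adjacent pieces. I would then invoke the Cheeger--Gromov--Fukaya collapsing theorem (\cite{CG86}, \cite{Fuk89}) to shrink the $F$-orbits to a scale $\varepsilon_m\to 0$ while distorting the transverse metric so that the resulting metric satisfies $|\mathrm{sec}| \le 1+\delta_m$ with $\delta_m\to 0$. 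On a thin collar around the boundary of each $V_j^{(m)}$ one interpolates between $g_{\IR}$ and the collapsing metric; because the latter is modeled on a warped product whose fiber is the horospherical torus, this interpolation amounts to a standard warped-product deformation of the cusp geometry and contributes at most $O(\delta_m)$ to the curvature norm.

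With these pieces in place the three properties fall out: the volume of the hyperbolic region equals $\sum_j \mathrm{Vol}_{g_{\IR}}(V_j^{(m)})$ and converges to $\sum_j \mathrm{Vol}_{g_{\IR}}(V_j)$ because the excluded horoball tips have exponentially small volume, while the collapsed region has volume proportional to $\varepsilon_m \to 0$; the curvature is identically $-1$ on the hyperbolic region and satisfies $|\mathrm{sec}|\le 1+\delta_m$ on the collapsed region by the collapsing theorem; and $g_m = g_{\IR}$ to height $m$ on each cusp of each $V_j$ produces hyperbolic geodesic segments of length at least $m$, forcing $\mathrm{diam}_{g_m}(M) \to \infty$.

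The main obstacle will be constructing the $F$-structure so that it glues consistently across adjacent pieces and controlling the curvature on the transition collars between the hyperbolic interior $V_j^{(m)}$ and the collapsing region. Consistency across interfaces is afforded by the affine gluing hypothesis, which preserves translation structures on adjacent edge tori, while the curvature control on the transitions reduces to a one-parameter family of warped-product metrics on a compact cusp cross-section --- precisely the setting in which Fukaya's refinement of Cheeger--Gromov collapsing delivers uniform curvature estimates. A full derivation along these lines is carried out in Section 2 of \cite{CSS19}; the statement above is the specialization of that construction to extended graph manifolds.
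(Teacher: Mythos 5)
Your proposal is correct and takes essentially the same approach as the paper: the paper simply defers to Section 2 of Connell--Su\'arez--Serrato \cite{CSS19} (pages 1287--1292) for this construction, and your sketch --- hyperbolic metric on deepening truncations of the pure pieces, Cheeger--Gromov--Fukaya collapse along an $F$-structure on the cusp tips, product pieces, and gluing collars, with a warped-product interpolation on transition regions --- is a faithful outline of what is carried out there.
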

\begin{proof}
	Recall that, if $n=\dim M$, a cusp of a real-hyperbolic piece is isometric to $\IR^{+}\times T^{n-1}$ equipped with the metric $dt^2+e^{-2t}g_{T^{n-1}}$, where $g_{T^{n-1}}$ is a flat metric on a $(n-1)$-dimensional torus $T^{n-1}$. Up to height $m$ on a cusp then corresponds the region $[0, m]\times T^{n-1}$. For the remaining details of the construction of the sequence of metrics $g_{m}$, we refer to Section 2 in \cite{CSS19}.
\end{proof}

We can now prove the first theorem stated in the introduction.

\begin{proof}[Proof of Theorem \ref{real}]
	
Let $\Lambda:=\pi_{1}(M)$ be residually finite, and let  $\{\Lambda_i\}$ be a cofinal filtration of $\Lambda$ by nested finite index normal subgroups. For any fixed integer $m$, let $(M^i, g^i_m)$ be the associated tower of normal Riemannian coverings where
\[
\pi_i: M^i\to M, \quad g^i_{m}=\pi^*_i(g_m)
\]
are respectively the cover associated to the finite index normal subgroup $\Lambda_i \triangleleft\Lambda$ and the pull-back metric of $g_m$ via $\pi_i$. Here $(M, g_{m})$ is the sequence of metrics given in Lemma \ref{cheeger}.

For any integer $m$, we define $M_m$ to be the possibly disconnected open submanifold of $M$ given by the union up to height $m/2$ along the cusps of all the pure pieces $\{V_j\}^k_{j=1}$. Also, for any regular cover $\pi_i:M^i\to M$ we define
\[
M^i_m:=\pi^{-1}_m(M_m)\subset M^i,
\] 
where $M^i_m$ is an open and possibly disconnected submanifold of $M^i$.

\begin{lemma}\label{fat}
	For any integer $m\geq 1$, there exists a positive integer $i_m$ such that for $i\geq i_m$ and for $x\in M^i_m$ there exists a ball centered at $x$ of radius $m/2$, say $B_{g_i}(x, m/2)$, isometric to a ball in $(\IH^n_{\IR}, g_{\IR})$.
\end{lemma}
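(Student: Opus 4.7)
The plan is to use residual finiteness of $\Lambda = \pi_1(M)$ to kill, in a sufficiently deep cover, every nontrivial deck transformation that could prevent a hyperbolic ball of radius $m/2$ from projecting injectively from $\widetilde{M}$ to $M^i$.

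First I would verify the purely local geometric fact that for any lift $\tilde{x} \in \widetilde{M}$ of any point $x \in M_m$, the open ball $B_{\tilde{g}_m}(\tilde{x}, m/2)$ is isometric to a ball of radius $m/2$ in $(\IH^n_{\IR}, g_{\IR})$. By Lemma \ref{cheeger}, $g_m$ agrees with $g_{\IR}$ on each real-hyperbolic piece up to depth $m$ in every cusp, so in the universal cover the chamber containing $\tilde{x}$ is, per Corollary \ref{rough} (with trivial Euclidean factor, since we are restricted to pure real-hyperbolic pieces), isometric to the complement of an equivariant family of open horoballs in $\IH^n_{\IR}$. The construction of $M_m$ guarantees that $\tilde{x}$ sits at hyperbolic distance at least $m/2$ from every bounding horosphere of this chamber, so the hyperbolic ball $B_{g_{\IR}}(\tilde{x}, m/2) \subset \IH^n_{\IR}$ lies entirely inside the chamber and coincides with $B_{\tilde{g}_m}(\tilde{x}, m/2)$.

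Next I would reduce the remainder of the argument to a finite group-theoretic computation. Pick a compact set $F \subset \widetilde{M}$ that surjects onto the compact region $\overline{M_m}$. Since $\Lambda$ acts freely, properly discontinuously, and cocompactly on $\widetilde{M}$, the set
\[
S := \{\, \gamma \in \Lambda \setminus \{1\} \; : \; d_{\tilde{g}_m}(F, \gamma F) < m \,\}
\]
is finite. By residual finiteness of $\Lambda$, for each $\gamma \in S$ there is a finite-index normal subgroup $\Lambda^\gamma \triangleleft \Lambda$ with $\gamma \notin \Lambda^\gamma$, and the finite intersection $\Lambda^\ast := \bigcap_{\gamma \in S} \Lambda^\gamma$ is a finite-index normal subgroup of $\Lambda$ disjoint from $S$. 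Cofinality of the filtration $\{\Lambda_i\}$ then produces $i_m$ with $\Lambda_i \leq \Lambda^\ast$, and hence $\Lambda_i \cap S = \emptyset$, for every $i \geq i_m$.

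To finish, for $i \geq i_m$ and any $x \in M^i_m$, one lifts $x$ to some $\tilde{x} \in F$ and observes that every nontrivial $\gamma \in \Lambda_i$ satisfies $d(\tilde{x}, \gamma \tilde{x}) \geq m$; the triangle inequality then gives $\gamma \cdot B_{\tilde{g}_m}(\tilde{x}, m/2) \cap B_{\tilde{g}_m}(\tilde{x}, m/2) = \emptyset$, so the covering projection $\widetilde{M} \to M^i$ is injective on $B_{\tilde{g}_m}(\tilde{x}, m/2)$ and this ball descends isometrically to $B_{g_m^i}(x, m/2)$. The delicate step I anticipate is the first paragraph: one must verify carefully that every lift of a point of $M_m$ genuinely has a hyperbolic \emph{buffer} of radius $m/2$ inside its chamber before meeting either a horosphere or a non-hyperbolic wall; once that geometric fact is established, the residual-finiteness step is a routine group-theoretic exercise.
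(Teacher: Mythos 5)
Your proposal is correct and follows essentially the same route as the paper's proof. The paper obtains the growth of the injectivity radius on lifts of $M_m$ by citing Theorem 3.3 of \cite{DL19} for the divergence of the minimal displacement $r^i_m$ of nontrivial elements of $\Lambda_i$ acting on $(\widetilde{M}, \tilde{g}_m)$; you instead reprove the relevant special case directly from properness of the action, compactness, residual finiteness, and cofinality of the filtration, but the strategy---isolate the region where $\tilde{g}_m$ is locally symmetric via Lemma~\ref{cheeger}, then pass to a deep enough cover so that balls of radius $m/2$ project injectively---is the same as the paper's. One small point worth making explicit in your final step: the lift $\tilde{x}\in F$ of $\pi_i(x)$ need not project to $x$ under $\widetilde{M}\to M^i$, so transporting the displacement bound from $\tilde{x}$ to an actual preimage of $x$ in $\widetilde{M}$ uses the normality of $\Lambda_i$ in $\Lambda$ (conjugation by the translating element preserves $\Lambda_i$, and the displacement of $\Lambda_i$ is therefore constant along $\Lambda$-orbits).
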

\begin{proof}
	First, for any $i, m\geq 1$ define
	\[
	r^i_m \quad \stackrel{{\rm def}}{=} \quad \inf \{d_{\tilde{g}_m}(z, \lambda_i z)\quad | \quad z\in \widetilde{M}, \quad \lambda_i\in\Lambda_i, \quad \lambda_i\neq \textrm{id}_{\Lambda_i} \}
	\]
	where $\tilde{\pi}:\widetilde{M}\to M$ is the topological universal cover and $\tilde{g}_m:=\tilde{\pi}^*g_m$. By Theorem 3.3 in \cite{DL19}, we know that for any fixed $m$
	\begin{align}\label{infinity}
	\lim_{i\to\infty}r^i_m=\infty.
	\end{align}
	Moreover, a Riemannian ball $B_{g^i_m}(-, r^i_m/2)$ (centered at any point) in $(M^i, g^i_m)$ is isometric to a ball of the same radius in $(\widetilde{M}, \tilde{g}_m)$. By construction, a ball $B_{g^i_m}(x, m/2)$ for $x\in M^i_m$ contains only points where the sectional curvature is equal to $-1$. Next, choose $i_m$ so that for $i\geq i_m$ we have $r^i_m\geq m/2$. This can always be achieved thanks to Equation \eqref{infinity}. We then have that for $i\geq i_m$, the ball $B_{g^i_m}(x, m/2)$ for $x\in M^i_{m}$ must be entirely contained in a real-hyperbolic chamber $(C, g_{\IR})$ of the tree of spaces $(\widetilde{M}, \tilde{\pi}, \tilde{T})$, see Corollary \ref{rough}.
\end{proof}

Using Equation \eqref{integral} and Definition \ref{density}, given any closed $n$-dimensional Riemannian manifold $(M, g)$ one can write
\[
b_l(M)=\int_{M}\rho_l(x)d\mu_g
\]
where $\rho_l(x)$ is the $l$-th Betti number density, see Section \ref{Pricepackage} for more details. Consider first the Betti numbers of sub-critical degree $l<(\dim_{\IR}(M)-1)/2$. We then have the estimate
\begin{align}\label{bettidensity}
b_{l}(M^i)=\int_{M^i_m}\rho_{l}(x)d\mu_{g^i_m}+\int_{M^i\setminus M^i_m}\rho_{l}(x)d\mu_{g^i_m}. 
\end{align}
By using Lemma \ref{fat}, we know that if $i>i_m$ then any point $x\in M^i_m$ has a ball $B_{g_i}(x, m/2)$ isometric to a ball in the real-hyperbolic space $(\IH^n_{\IR}, g_{\IR})$. Moreover, by eventually increasing $i_m$, we can also arrange that if $i>i_m$  then any point $x\in M^i\setminus M^i_m$ satisfies
\begin{align}\label{loweri}
inj_{g^i_m}(M^i, x)\geq \delta,
\end{align}
for some $\delta>0$. To see this, first recall that by Lemma 3.2. in \cite{LafontBook} any extended graph manifold is a $K(\pi, 1)$-space and then an \emph{essential} manifold in the terminology of Cheeger-Gromov \cite{CG86}. Now by construction of $M$ (as a graph), we have that $M$ admits an $F$-structure outside a compact subset, say $C$, compatible with the sequence of metrics $\{g_m\}$. The complement of $C$ in $M$ clearly contains all the non-pure pieces and the near infinity part of the pure real-hyperbolic pieces, i.e., the near infinity parts of the toral cusps. By Theorem 10 in \cite{CG86} the $F$-structure is injective, and as observed in Theorem 2.5 of \cite{Ron93} this easily implies that the sequence of metrics $\{g_m\}$ is collapsing with \emph{bounded covering geometry}. As the sequence $(M^i, g^i_m)$ is converging to the universal Riemannian cover, this is the same as saying that Equation \ref{loweri} holds (with $\delta$ independent of $m$) for $i$ large enough depending on $m$.

Next by Lemma \ref{Restimate}, if $x\in M^i_m$ and $i>i_m$ we have 
\begin{align}\label{rlemma}
0\leq\rho_l(x)\leq C_1(\dim_{\IR}(M), l) e^{-(\dim_{\IR}(M)-1-2l)\frac{m}{2}},
\end{align}
where $C_1(\dim_{\IR}(M), l)>0$ is a constant depending on the dimension and the degree only. Similarly, by Lemma \ref{estimate} if $x\in M^i\setminus M^i_m$ and $i>i_m$
\begin{align}\label{blemma}
0\leq\rho_l(x)\leq c(\dim_{\IR}(M), l, 1+\delta_m, \delta)
\end{align}
where $\delta$ is the injectivity radius lower bound coming from collapsing with bounded covering geometry given in Equation \eqref{loweri}, and where $1+\delta_m$ is the sectional curvature bound for the sequence of metrics $\{g_m\}$ given in Lemma \ref{cheeger}. As $\lim_{m\to\infty}\delta_m=0$ (see Lemma \ref{cheeger}), for $m>>0$ we can always assume that $0<1+\delta_{m}\leq 2$. Thus, for large $m$ the upper bound in Equation \eqref{blemma} can be chosen to be independent of $m$.

Next, observe that by construction of the metrics $\{g_{m}\}$ on $M$ we have
\begin{align}\label{volreminder}
\lim_{m\to\infty}Vol_{g_{m}}(M\setminus M_m)=0.
\end{align}
By combining Equations \eqref{bettidensity}, \eqref{rlemma}, \eqref{blemma}, and \eqref{volreminder} we have the estimate
\begin{align}\notag
b_{l}(M^{i})&\leq C_{1} Vol_{g^i_m}(M^i_m)e^{-(\dim_{\IR}(M)-1-2l)\frac{m}{2}}+cVol(M^i\setminus M^i_m)\\ \notag
&\leq C_1 Vol_{g^{i}_m}(M^i)e^{-(\dim_{\IR}(M)-1-2l)\frac{m}{2}}+c\deg(\pi_i)\epsilon(m),
\end{align}
where $\epsilon(m)>0$ is such that
\[
\lim_{m\to\infty}\epsilon(m)=0,
\]
see Equation \eqref{volreminder}. We then compute
\[
\limsup_{i\to\infty}\frac{b_l(M^i)}{Vol_{g^i_m}(M^i)}\leq C_1e^{-(\dim_{\IR}(M)-1-2l)\frac{m}{2}}+\frac{c\cdot\epsilon(m)}{Vol_{g_m}(M)}.
\]
Since the constant $c(n, l, 1+\delta_m, \delta)$ can be chosen independently of $m$ for $m>>0$, there exists $m_0>0$ such that for any $m\geq m_0$ we have
\begin{align}\label{limsupr}
\limsup_{i\to\infty}\frac{b_l(M^i)}{\deg(\pi_i)}\leq C_1Vol_{g_m}(M)e^{-(\dim_{\IR}(M)-1-2l)\frac{m}{2}}+c\cdot\epsilon(m),
\end{align}
where $c=c(\dim_{\IR}(M), l, \delta)>0$. By L\"uck's approximation theorem (see Main Theorem in \cite{Luck}), we know that
\begin{align}\label{luckr}
\limsup_{i\to\infty}\frac{b_l(M^i)}{\deg(\pi_i)}=\lim_{i\to\infty}\frac{b_l(M^i)}{\deg(\pi_i)}=b^{(2)}_l(M).
\end{align}
Thus, by taking the limit as $m\to\infty$ in the right-hand-side of Equation \eqref{limsupr} and combining it with Equation \eqref{luckr}, we obtain that $b^{(2)}_l(M)=0$ for $l<(\dim_{\IR}(M)-1)/2$, and by Poincar\'e duality for $l>(\dim_{\IR}(M)-1)/2$. Finally, if $\dim_{\IR}(M)=2n$ and $l=n$, we can use these vanishing results to compute
\[
b^{(2)}_{n}(M)=\lim_{i\to\infty}\frac{b_{n}(M^i)}{\deg(\pi_i)}=\frac{(-1)^n\chi(M^{i})}{\deg(\pi_i)}=(-1)^n\chi_{top}(M).
\]
Next, a Meyer-Vietoris argument yields
\[
\chi_{top}(M)=\sum^k_{j=1}\chi(V_{j}),
\]
and this concludes the proof in the even dimensional case.

In the odd dimensional case, everything is the same with the exception of the critical degree $l=(\dim_{\IR}(M)-1)/2$. In this degree, Lemma \ref{Restimate} implies that if $x\in M^i_m$ and $i>i_m$ we have
\begin{align}\label{rlemma2}
0\leq\rho_l(x)\leq C_2(\dim_{\IR}(M), l)\frac{2}{m},
\end{align}
\emph{cf}. Equation \eqref{rlemma}. The rest of the argument now goes through simply interchanging Equation \eqref{rlemma} with Equation \eqref{rlemma2}. As the upper bound in Equation \eqref{rlemma2} goes to zero when $m\to\infty$, we still obtain in the end that $b^{(2)}_l(M)=0$ and then by Poincar\'e duality $b^{(2)}_{l+1}(M)=0$ also. The proof is complete.
\end{proof}
 
We can now focus on the case of pure complex-hyperbolic higher graph manifolds. The following lemma provides a suitable sequence of metrics on such manifolds which collapses with bounded covering geometry the gluing regions in the graph manifold, and it is locally symmetric on larger and larger portions of the vertices. This sequence of metrics was explicitly constructed by Connell-Su\'arez-Serrato in \cite{CSS19}.

\begin{lemma}[Connell-Su\'arez-Serrato]\label{cheegerc}
	Let $M^{2n}$ be a pure complex-hyperbolic higher graph manifold with $k\geq 2$ pure complex-hyperbolic pieces, say $\{(W_j, g_{\IC})\}^k_{j=1}$. There exists a sequence of metrics $\{g_m\}$ on $M$ such that:
	\begin{itemize}
		\item $\lim_{m\to\infty}Vol_{g_m}(M)=\sum^k_{j=1}Vol_{g_{\IC}}(W_j)$;
		\item $|sec_{g_m}|\leq 4$ for any $m\geq 1$;
		\item $g_m=g_{\IC}$ on each piece $W_j$ up to height $m$ on each of its cusps. In particular, $\lim_{m\to\infty}diam_{g_m}(M)=\infty$.
	\end{itemize}
\end{lemma}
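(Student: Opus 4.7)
The plan is to adapt the construction of Connell-Su\'arez-Serrato in Section 2 of \cite{CSS19} to the pure complex-hyperbolic setting, following the same template used to produce the metrics of Lemma \ref{cheeger}. The essential difference from the extended graph manifold case is that the horospherical cross-sections at the cusps of each $W_j$ are infranilmanifolds modeled on the Heisenberg group $\mathcal{H}^{2n-1}$ rather than flat tori, so the collapsing near the boundary will be controlled by a polarized nilpotent Killing structure in the sense of Cheeger-Gromov-Fukaya rather than a pure $F$-structure with toral orbits.

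First, I would use that near each parabolic cusp of a complete finite-volume complex-hyperbolic manifold, the metric $g_{\IC}$ has an explicit doubly-warped description with base the height coordinate $t$ and fiber $\mathcal{H}^{2n-1}/\Gamma_{\infty}$: the fiber contracts like $e^{-t}$ in the horizontal directions and like $e^{-2t}$ in the central direction. Consequently the cross-sections collapse with sectional curvatures uniformly bounded in absolute value by $4$, and since all parabolic elements are assumed to be translations (so $\Gamma_{\infty}$ is a lattice in $\mathcal{H}^{2n-1}$) and the boundary identifications are affine, the underlying nilpotent structure extends smoothly across the truncated nilmanifold boundaries. Given $m\geq 1$, I would then set $g_m=g_{\IC}$ on each $W_j$ truncated at height $m$, and on a thin boundary collar (whose width is independent of $m$) together with the gluing collar adjacent to each nilmanifold boundary I would smoothly interpolate $g_{\IC}$ with a fixed reference metric on the boundary nilmanifold that has been collapsed in both horizontal and central directions using Fukaya's theorem \cite{Fuk89}. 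Because the gluing diffeomorphisms are affine and preserve the nilpotent structure, the interpolated metrics assemble into a globally defined Riemannian metric on $M$.

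The main obstacle is keeping the sectional curvature uniformly bounded by $4$ throughout the interpolation annulus: unlike the toral case of Lemma \ref{cheeger}, one must manage the non-abelian bracket of the Heisenberg Lie algebra, whose contribution to the curvature of collapsed metrics is nontrivial and prevents the sharper pinching $|sec_{g_m}|\leq 1+\delta_m$ enjoyed in the real-hyperbolic case. This is precisely where Fukaya's refinement of Cheeger-Gromov collapsing theory \cite{CG86}, \cite{Fuk89} is needed, and where the weaker uniform bound $|sec_{g_m}|\leq 4$ (matching the sectional curvature pinching of the complex-hyperbolic model) comes from. The volume statement follows since $\mathrm{Vol}_{g_{\IC}}(W_j\cap\{t\leq m\})\to \mathrm{Vol}_{g_{\IC}}(W_j)$ as $m\to\infty$ while the collapsed collar regions have volume tending to zero, and the diameter statement is immediate from $\mathrm{diam}_{g_{\IC}}(W_j\cap\{t\leq m\})\to\infty$.
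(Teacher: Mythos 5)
Your proposal is correct and follows essentially the same route as the paper, which simply cites the construction of Connell--Su\'arez--Serrato (Section 2 of \cite{CSS19}, pages 1287--1293) without reproducing it; your sketch is in effect an unpacking of that reference, correctly identifying the doubly-warped cusp geometry, the need for Fukaya's nilpotent collapsing theory in place of toral $F$-structures, and the origin of the weaker uniform curvature bound $|sec_{g_m}|\leq 4$ from the complex-hyperbolic pinching.
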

\begin{proof}
	See Section 2 in \cite{CSS19}, pages 1287-1293.
\end{proof}

We can now prove the second theorem stated in the introduction.

\begin{proof}[Proof of Theorem \ref{complex}]
	
	The set up is very similar to the proof of Theorem \ref{real}. Let $\Lambda:=\pi_{1}(M)$ be residually finite, and let  $\{\Lambda_i\}$ be a cofinal filtration of $\Lambda$ by nested finite index normal subgroups. For any fixed integer $m$, let $(M^i, g^i_m)$ be the associated tower of normal Riemannian coverings where
	\[
	\pi_i: M^i\to M, \quad g^i_{m}=\pi^*_i(g_m)
	\]
	are respectively the cover associated to the finite index normal subgroup $\Lambda_i \triangleleft\Lambda$ and the pull-back metric of $g_m$ via $\pi_i$. Here $(M, g_{m})$ is the sequence of metrics given in Lemma \ref{cheeger}.
	
	For any integer $m$, we define $M_m$ to be the possibly disconnected open submanifold of $M$ given by the union up to height $m/2$ along the cusps of all the pure pieces $\{V_j\}^k_{j=1}$. Also, for any regular cover $\pi_i:M^i\to M$ we define
	\[
	M^i_m:=\pi^{-1}_m(M_m)\subset M^i,
	\] 
	where $M^i_m$ is an open and possibly disconnected submanifold of $M^i$.
	
	\begin{lemma}\label{fatc}
		For any integer $m\geq 1$, there exists a positive integer $i_m$ such that for $i\geq i_m$ and for $x\in M^i_m$ there exists a ball centered at $x$ of radius $m/2$, say $B_{g_i}(x, m/2)$, isometric to a ball in $(\IH^n_{\IC}, g_{\IC})$.
	\end{lemma}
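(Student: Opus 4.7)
The plan is to transcribe the proof of Lemma \ref{fat} verbatim, replacing the real-hyperbolic model geometry $(\IH^n_{\IR}, g_{\IR})$ by the complex-hyperbolic model $(\IH^n_{\IC}, g_{\IC})$ and invoking the complex-hyperbolic tree-of-spaces structure (Corollary \ref{roughc}) in place of Corollary \ref{rough}. Nothing in the original argument used the specific nature of the model beyond the facts that each chamber of $\widetilde M$ carries the corresponding symmetric metric and that the sequence $\{g_m\}$ coincides with this symmetric metric up to height $m$ in each cusp.

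First I would introduce the systole-type quantity
\[
r^i_m \;\stackrel{{\rm def}}{=}\; \inf\{\, d_{\tilde g_m}(z, \lambda_i z) \,\mid\, z \in \widetilde M, \ \lambda_i \in \Lambda_i, \ \lambda_i \neq \textrm{id} \,\},
\]
where $\tilde{\pi} \colon \widetilde M \to M$ is the topological universal cover and $\tilde g_m := \tilde{\pi}^* g_m$. Since $\Lambda = \pi_1(M)$ is residually finite with cofinal filtration $\{\Lambda_i\}$, Theorem 3.3 in \cite{DL19} yields $\lim_{i\to\infty} r^i_m = \infty$ for each fixed $m$. As in the real case, for every $x \in M^i$ the ball $B_{g^i_m}(x, r^i_m/2)$ is isometric to a ball of the same radius in $(\widetilde M, \tilde g_m)$. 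I would then choose $i_m$ large enough so that $r^i_m \geq m$ for all $i \geq i_m$; this guarantees that for any $x \in M^i_m$ the ball $B_{g^i_m}(x, m/2)$ lifts isometrically to a ball $B_{\tilde g_m}(\tilde x, m/2)$ in the Riemannian universal cover.

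It remains to identify this lifted ball with a complex-hyperbolic ball. By construction of the sequence $\{g_m\}$ in Lemma \ref{cheegerc}, the metric agrees with $g_{\IC}$ on each complex-hyperbolic piece $W_j$ up to height $m$ along its cusps, while the condition $x \in M^i_m$ means that $\pi_i(x)$ lies at depth at most $m/2$ inside some such piece. Passing to the tree-of-spaces decomposition $(\widetilde M, \tilde p, \widetilde T)$ of Corollary \ref{roughc}, the lift $\tilde x$ therefore sits inside a complex-hyperbolic chamber $(C, g_{\IC})$ at distance at least $m/2$ from every adjacent internal wall. Consequently $B_{\tilde g_m}(\tilde x, m/2) \subset C$ and is isometric to a ball of radius $m/2$ in $(\IH^n_{\IC}, g_{\IC})$, which is the claim.

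The main obstacle is purely organizational rather than analytical: one must simultaneously arrange that the ball does not escape its chamber (ensured by the height-$m/2$ truncation in the definition of $M_m$) and that it does not wrap nontrivially under the covering $\widetilde M \to M^i$ (ensured by the residually finite approximation of the injectivity radius through $r^i_m$). Both ingredients are already in place from the real-hyperbolic proof, and the sole substantive change is the substitution of Corollary \ref{rough} by its complex counterpart Corollary \ref{roughc}.
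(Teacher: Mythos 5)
Your proposal is correct and coincides with the paper's own (very terse) proof, which simply states that one should repeat the argument of Lemma~\ref{fat} with Corollary~\ref{roughc} in place of Corollary~\ref{rough}. You have filled in exactly those details, making the minor but appropriate adjustments (using $r^i_m \geq m$ to guarantee an isometric lift of a radius-$m/2$ ball, and appealing to the fact that $g_m = g_{\IC}$ on the truncated pieces rather than to constant curvature), so nothing new or different is being introduced here.
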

	\begin{proof}
		Use Corollary \ref{roughc} instead of Corollary \ref{rough} in the proof of Lemma \ref{fat}. The rest is identical.
	\end{proof}
	
	Given Lemma \ref{fat} and Lemma \ref{cheegerc}, we write for $l<n=\dim_{\IR}(M)/2$
	\[
	b_l(M)=\int_{M}\rho_l(x)d\mu_g
	\]
	where $\rho_l(x)$ is the $l$-th Betti number density, see Section \ref{Pricepackage} for more details. From this point on, one can then follow step-by-step the proof of Theorem \ref{real} in the even dimensional case. The only difference is the use of the Price inequality estimate for complex-hyperbolic spaces given Lemma \ref{Cestimate} instead of the estimate for real-hyperbolic spaces given in Lemma \ref{Restimate}. The proof is complete.
	\end{proof}

\section{Higher Graph Manifolds Without Pure Pieces}\label{Gromov}

Recall the definition of Gromov minimal volume.

\begin{define}[Gromov \cite{Gro82}]
	Let $M^n$ be a smooth manifold. Consider the set of all complete metrics on $M$ satisfying the bound
	\[
	|sec_g|\leq 1.
	\]
	We define the Gromov minimal volume of $M$ to be
	\[
	\text{MinVol(M)}:=\inf_{g} \Big\{ Vol_{g}(M)\text{  } | \text{  } |sec_{g}|\leq 1\Big\}.
	\]
\end{define}

As stated in \cite{Gro99}, for each $n$ there is a constant $c_n$ with the following property: If $M^n$ is an $n$-dimensional, closed, aspherical manifold, then
\begin{align}\label{Sauer}
b^{(2)}_i(M)\leq c_n MinVol(M),
\end{align}
for all $i\geq 0$. In \cite[Section 5.33]{Gro99} Gromov outlines a proof of this fact under the assumption that $\pi_1(M)$ is residually finite. See the paper by Sauer \cite{Sau09} for a proof of Equation \eqref{Sauer} without the residual finiteness assumption. In particular, the Singer conjecture holds true for aspherical manifolds with $MinVol=0$ (see Conjecture \ref{Singer}). Finally, we observe that if the fundamental group is residually finite then Fauser-Friedl-L\"oh prove an integral approximation theorem for the simplicial volume of higher graph manifolds without pure pieces, see Theorem 1.2 in \cite{Loh}. This interesting result can then be used to prove the vanishing of gradient invariants of higher graph manifolds without pure pieces, see Corollary 1.4 and Remark 1.5 in \cite{Loh}. Gradient invariants are defined in \cite{Loh} as the normalization over finite coverings and finite index subgroups of several topological quantities. In the case of Betti numbers, this normalization process recovers the usual $L^2$-Betti numbers. For more details, see the Introduction in \cite{Loh}.

We now provide a proof of the Singer conjecture (and Equation \eqref{Sauer}) for aspherical manifold with $MinVol=0$ and residually finite fundamental group.  The proof is along the lines of the strategy employed in the proofs of Theorem \ref{real} and Theorem \ref{complex}. Even if this was already known thanks to the work of Gromov and Sauer, we think it is somewhat interesting to have yet another approach.

\begin{proposition}\label{collapsing}
	Let $M^n$ be a closed aspherical manifold with $MinVol(M)=0$ and $\pi_1(M)$ residually finite. We then have
	\[
	b^{(2)}_{l}(M)=0, \quad \textrm{for\quad $l=0, 1, ..., n$}.
	\]
\end{proposition}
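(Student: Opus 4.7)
The plan is to run the same template as in the proofs of Theorem \ref{real} and Theorem \ref{complex}, but now the entire manifold plays the role of the ``collapsing'' region, and there is no pure hyperbolic part where Price-type decay would be needed. First, fix a sequence of metrics $\{g_m\}$ on $M$ realizing $MinVol(M)=0$, so that $|sec_{g_m}|\leq 1$ for all $m$ and $Vol_{g_m}(M)\to 0$. Using residual finiteness of $\Lambda:=\pi_1(M)$, choose a cofinal filtration $\{\Lambda_i\}$ of $\Lambda$ by nested finite-index normal subgroups and let $\pi_i\colon M^i\to M$ denote the corresponding tower of regular Riemannian covers, equipped with pulled-back metrics $g^i_m=\pi_i^*(g_m)$.

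The key geometric input is a uniform injectivity radius lower bound along this tower, obtained exactly as in the proof of Theorem \ref{real}. Since $M$ is aspherical, it is an essential manifold in the sense of Cheeger-Gromov. The sequence $\{g_m\}$ produces an $F$-structure of positive rank on $M$, and by Theorem 10 of \cite{CG86} essentiality forces this $F$-structure to be pure. Theorem 2.5 of Rong \cite{Ron93} then implies that $\{g_m\}$ collapses with \emph{bounded covering geometry}: for each $m$ there exists $i_m$ such that for $i\geq i_m$ one has $\inj_{g^i_m}(M^i,x)\geq \delta$ for every $x\in M^i$, with $\delta>0$ independent of $m$. Combined with the sectional curvature bound $|sec_{g^i_m}|\leq 1$, Lemma \ref{estimate} then yields a constant $c=c(n,1,l,\delta)>0$, independent of both $m$ and $i$, such that the pointwise Betti number density satisfies $\rho_l(x)\leq c$ for all $x\in M^i$ whenever $i\geq i_m$.

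Integrating over $M^i$ and using the covering formula for the volume,
\[
b_l(M^i)\;=\;\int_{M^i}\rho_l(x)\,d\mu_{g^i_m}\;\leq\; c\cdot Vol_{g^i_m}(M^i)\;=\;c\cdot\deg(\pi_i)\cdot Vol_{g_m}(M).
\]
Dividing by $\deg(\pi_i)$ and passing to the limit via L\"uck's approximation theorem exactly as in the proof of Theorem \ref{real} gives
\[
b^{(2)}_l(M)\;=\;\lim_{i\to\infty}\frac{b_l(M^i)}{\deg(\pi_i)}\;\leq\; c\cdot Vol_{g_m}(M).
\]
Letting $m\to\infty$ and invoking $Vol_{g_m}(M)\to 0$ forces $b^{(2)}_l(M)=0$ for every $l=0,1,\ldots,n$. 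The only step requiring care is establishing the bounded covering geometry in the second paragraph; this relies on both the essentiality of aspherical manifolds and the Cheeger-Gromov/Rong machinery, but it is exactly the same input already exploited in the proof of Theorem \ref{real} and is therefore freely available to us.
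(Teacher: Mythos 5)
Your proof matches the paper's argument step for step: realize $MinVol=0$ by a sequence of bounded-curvature metrics, pass to a cofinal tower of regular covers via residual finiteness, invoke essentiality plus the Cheeger--Gromov/Rong $F$-structure theory to get a uniform injectivity-radius lower bound along the tower (bounded covering geometry), bound the Betti number density by Lemma~\ref{estimate}, integrate, and conclude via L\"uck's approximation theorem by letting $m\to\infty$. The only quibble is a terminological slip: the paper cites Theorem 10 of \cite{CG86} to conclude the $F$-structure is \emph{injective} (not \emph{pure}), and it is injectivity that feeds into Rong's Theorem 2.5 to yield bounded covering geometry; otherwise the argument is identical to the one in the paper.
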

\begin{proof}
	Since $MinVol(M)=0$, there exists a sequence of metrics $\{g_m\}$ on $M$, with $|sec_{g_m}|\leq 1$, and such that
	\[
	\lim_{m\to\infty}Vol_{g_m}(M)=0.
	\]
	Given $\Lambda:=\pi_1(M)$, let a $\{\Lambda_i\}$ be a cofinal filtration by nested finite index normal subgroups. For any fixed $m$, let $(M^i, g^i_m)$ be the associated tower of normal Riemannian coverings where
	\[
	\pi_i: M^i\to M, \quad g^i_{m}=\pi^*_i(g_m)
	\]
	are respectively the cover associated to $\Lambda_i \triangleleft\pi_1(M)$ and the pull-back metric of $g_m$ via $\pi_i$. Next, we have the following lemma.
	\begin{lemma}\label{cofinal}
		For any fixed $m$, there exists $i_m>0$ such that for any $i\geq i_m$ we have
		\begin{align}\label{lowi}
		\inj_{g^i_m}(M^i)\geq \epsilon>0,
		\end{align}
		where $\epsilon>0$ is independent of $m$.
	\end{lemma}
	\begin{proof}
		$M$ is an essential manifold in the terminology of Cheeger-Gromov \cite{CG86}, and we have that $M$ admits an $F$-structure. By Theorem A.1. in \cite{CG86} the $F$-structure is injective, and as observed in Theorem 2.5 of \cite{Ron93} this easily implies that the sequence of metrics $\{g_m\}$ is collapsing with \emph{bounded covering geometry}. As the sequence $(M^i, g^i_m)$ is converging to the universal Riemannian cover, this gives that Equation \eqref{lowi} is satisfied. 
	\end{proof}

We can now use Lemma \ref{cofinal} with Lemma \ref{estimate}, to conclude that
\begin{align}\label{price}
\frac{b_l(M^i)}{Vol_{g^i_k}(M^i)}\leq C,
\end{align}
for $i\geq i_m$, with $C>0$ independent of $m$. We can now apply L\"uck's approximation theorem (see Main Theorem in \cite{Luck}) to conclude that for any fixed $m$
\begin{align}\label{luck}
\lim_{i\to\infty}\frac{b_{l}(M^i)}{Vol_{g^i_m}(M^i)}=\lim_{i\to\infty}\frac{b_{l}(M^i)}{\deg(\pi_i)Vol_{g_m}(M)}=\frac{b^{(2)}_l(M)}{Vol_{g_m}(M)}.
\end{align}
By combining Equations \eqref{price} and \eqref{luck}, we then obtain that
\[
b^{(2)}_l(M)\leq C\cdot Vol_{g_{m}}(M),
\]
for any $m$ (\emph{cf}. Equation \eqref{Sauer}). The proof is therefore complete.
\end{proof}

The next corollary is then clear.

\begin{corollary}
	Let $M^n$ be a higher graph manifold without pure pieces and with $\pi_1(M)$ residually finite. We then have
	\[
	b^{(2)}_l(M)=0
	\]
	for all $l\geq 0$.
\end{corollary}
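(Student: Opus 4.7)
The plan is to reduce the corollary directly to Proposition \ref{collapsing}, which already carries out the Price-inequality and L\"uck-approximation machinery for any aspherical manifold with residually finite fundamental group and vanishing minimal volume. The only task is therefore to verify that a higher graph manifold $M$ without pure pieces is aspherical and satisfies $MinVol(M)=0$.

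For asphericity I would repeat the graph-of-groups argument used throughout Section 2: $M$ is a graph of spaces whose vertex spaces are products of truncated finite-volume rank-one locally symmetric manifolds with closed infranilmanifolds, and whose edge spaces are (infra)nilmanifolds, with $\pi_1$-injective attaching maps. Each such vertex and edge space is a $K(\pi,1)$, so $M$ itself is a $K(\pi,1)$ by a standard van Kampen / Mayer-Vietoris / graph-of-groups argument.

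For the vanishing of minimal volume I would invoke \cite{CSS19}. In the absence of pure pieces, every vertex carries a non-trivial compact nilmanifold factor; together with the horospherical nilmanifold structure along the cusps of each locally symmetric factor, this furnishes a polarized $F$-structure on all of $M$, rather than only on its non-pure portion. Plugging this $F$-structure into Cheeger-Gromov-Fukaya collapsing theory, or equivalently invoking the explicit metric constructions of Connell-Su\'arez-Serrato that underlie their minimal volume computation — the natural extension of Lemma \ref{cheegerc} to the pure-piece-free setting in which the entire manifold collapses — produces a family $\{g_m\}$ on $M$ with $|sec_{g_m}|\leq 1$ and $Vol_{g_m}(M)\to 0$. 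Hence $MinVol(M)=0$.

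With both hypotheses of Proposition \ref{collapsing} in hand, we immediately conclude that $b^{(2)}_l(M)=0$ for every $l\geq 0$. The only point requiring any care is the extraction of the correct $F$-structure / collapsing statement from \cite{CSS19} in the no-pure-pieces case, but this is essentially built into their minimal volume computation and involves no new ideas.
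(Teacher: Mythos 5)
Your proposal is correct and takes essentially the same route as the paper: reduce to Proposition \ref{collapsing} by establishing asphericity (the paper cites Lemma 3.2 in \cite{LafontBook}) and $MinVol(M)=0$ (the paper cites Theorem 1 in \cite{CSS19}). You spell out the collapsing and graph-of-groups arguments underlying those citations, but the logical structure is identical.
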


\begin{proof}
	By Theorem 1 in \cite{CSS19}, we know that $MinVol(M)=0$. By Lemma 3.2. in \cite{LafontBook}, we know that $M$ is aspherical. Conclude the proof by using Proposition \ref{collapsing}.
\end{proof}

\section{Residual Finiteness of Higher Graph Manifolds}


In this section, we study the residual finiteness of fundamental groups of higher graph manifolds. In general, it is an open question whether all such manifolds have residually finite fundamental group, see Section 12.1 in \cite{LafontBook}. We will first show that a large class of pure real-hyperbolic extended graph manifolds have residually finite fundamental group using Wise's virtually special machinery (Theorem \ref{vspecial}). This class includes, for example, any such manifold where the pieces are all arithmetic (Corollary \ref{arithmeticrf}). However, this machinery does not apply in the complex-hyperbolic setting. Hence, we will study applications of Hempel's approach to the residual finiteness of 3--manifold groups in higher dimensions. A conjecture of Behrstock-Neumann implies that this approach works quite broadly, see Proposition \ref{ccc->rf}. However, even without this conjecture we are able to apply this approach to a procedure we call \emph{iterated (twisted) partial doubles} to construct infinite families of both pure real-hyperbolic extended graph manifolds and pure complex-hyperbolic higher graph manifolds. Finally, we give some explicit examples of manifolds which can be used to construct these families. The pure complex-hyperbolic higher graph manifolds we construct appear to be the \emph{first} explicit examples of such manifolds available in the literature.

\subsection{Wise's Virtually Special Machinery and Pure Arithmetic Real-Hyperbolic Higher Graph Manifolds}


First, we observe that Wise's virtually special machinery implies residual finiteness of the fundamental groups of a large class of pure real-hyperbolic extended graph manifolds. This general result applies in particular to the case where all pieces are real-hyperbolic and arithmetic, though we expect it to apply more broadly as well. In fact, we are not aware of any real-hyperbolic manifolds which are known to not satisfy the hypothesis of Theorem \ref{vspecial}.

A cube complex is a space built by gluing cubes of various dimensions together along faces, where a cube is a copy of $[0, 1]^n$ for some $n$. A cube complex $C$ is called \emph{special} if $C$ admits a local isometry into the Salvetti complex of a right-angled Artin group, and $C$ is \emph{virtually special} if it has a finite cover which is special. A group $G$ is called virtually special if $G\cong \pi_1(C)$ where $C$ is a virtually special cube complex. This implies that $G$ has a finite index subgroup which embeds into a right-angled Artin group. $G$ is \emph{virtually compact special} if, in addition, $C$ is compact and \emph{virtually sparse special} if $C$ is the union of a compact subset and certain cusp-like subspaces. We refer to \cite{Wis21} for precise definitions and properties of virtually special groups. 

The next result is a special case of Theorem 1.9 in \cite{HuaW}. To see that the hypotheses are satisfied, we note that by Dahmani's combination theorem for relatively hyperbolic groups \cite{Dah03} $\pi_1(M)$ will be hyperbolic relative to the collection of maximal parabolic subgroups of the fundamental group of each piece. These parabolic subgroups will be relatively quasi-convex by definition \cite{Hru10}. 

\begin{theorem}\label{vspecial}
	Let $M$ be a pure real-hyperbolic extended graph manifold such that the fundamental group of each piece is virtually sparse special. Then $\pi_1(M)$ is residually finite. 
\end{theorem}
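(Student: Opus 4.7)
The plan is to reduce the theorem to a direct application of Theorem~1.9 of Huang-Wise \cite{HuaW}, so the work lies entirely in verifying the hypotheses of that black-box result.

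First, I would record the graph-of-groups decomposition of $\pi_1(M)$ coming from Definition~\ref{geometric}. Since $M$ is a pure real-hyperbolic extended graph manifold, $\pi_1(M)$ is the fundamental group of a finite graph of groups whose vertex groups are the groups $\pi_1(\bar{V}_j)=\pi_1(V_j)$ of the truncated real-hyperbolic pieces and whose edge groups are free abelian groups of rank $n-1$, realized as fundamental groups of the tori along which the pieces are glued. The key feature, which follows directly from the construction, is that the image of each edge group in an adjacent vertex group $\pi_1(V_j)$ is precisely a maximal parabolic (cusp) subgroup of the non-uniform lattice $\pi_1(V_j)\leq \mathrm{Isom}(\IH^{n}_{\IR})$.

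Second, I would establish that $\pi_1(M)$ is hyperbolic relative to a collection of subgroups containing all of the edge groups. Each vertex group $\pi_1(V_j)$ is hyperbolic relative to the finite collection of its maximal cusp subgroups, since it is a geometrically finite non-uniform lattice. By the previous paragraph, the edge inclusions of the splitting identify pairs of maximal parabolics across the splitting, which is exactly the setup of Dahmani's combination theorem \cite{Dah03}; applying it produces a relatively hyperbolic structure on $\pi_1(M)$ in which every edge subgroup is conjugate into a peripheral subgroup. Relative quasi-convexity of the edge subgroups then holds by definition \cite{Hru10}, since peripheral subgroups of a relatively hyperbolic group are automatically relatively quasi-convex.

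With these two facts in hand, the hypotheses of \cite[Theorem~1.9]{HuaW} are met: $\pi_1(M)$ splits as a finite graph of virtually sparse special groups (by assumption on the pieces) over relatively quasi-convex edge groups inside a relatively hyperbolic group. That theorem then yields that $\pi_1(M)$ itself is virtually sparse special, so in particular contains a finite-index subgroup embedding into a right-angled Artin group. Since right-angled Artin groups are linear over $\IZ$, they are residually finite, and therefore so is $\pi_1(M)$. The only real point of care in this argument is the bookkeeping needed to confirm that the peripheral structures on the vertex groups are compatible with the edge inclusions in the precise sense required by \cite{HuaW}; this compatibility is, however, built into Definition~\ref{geometric}, because each edge of the splitting is realized by an affine diffeomorphism between two cusp cross-sections, and so the induced map on fundamental groups identifies maximal parabolic subgroups.
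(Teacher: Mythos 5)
Your proposal is correct and takes essentially the same route as the paper: reduce to Theorem~1.9 of Huang--Wise, establish relative hyperbolicity of $\pi_1(M)$ via Dahmani's combination theorem, and note relative quasi-convexity of the edge (parabolic) subgroups. The extra remarks about edge groups being $\mathbb{Z}^{n-1}$ maximal parabolics and the final step through linearity of right-angled Artin groups merely spell out details the paper leaves implicit.
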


Arithmetic lattices were shown to be virtually special in \cite{BHW11}. While they focused on uniform lattices, the construction produces a virtually special but not compact cube complex for non-uniform lattices as well, see Section 9 in \cite{BHW11} and Section 4 in \cite{BW12}. Finiteness properties of this method of producing cube complexes were studied in \cite{HruW14}, where they were shown to be relatively compact in the non-uniform case, see Corollary 7.15 in \cite{HruW14}. They also show that the additional condition of the stronger property of being sparse also holds provided one makes an appropriate choice of cubulation of the peripheral structure, see Theorem 7.12 in \cite{HruW14}.

\begin{corollary}\label{arithmeticrf}
Let $M$ be a pure real-hyperbolic extended graph manifold such that each piece is a real-hyperbolic arithmetic manifold. Then $\pi_1(M)$ is residually finite. 
\end{corollary}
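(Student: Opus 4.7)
The plan is to deduce this corollary as a direct application of Theorem \ref{vspecial}, so the entire task reduces to verifying the hypothesis that the fundamental group of each piece is virtually sparse special. Since each piece $V_j$ is a finite-volume arithmetic real-hyperbolic manifold with toral cusps, $\pi_1(V_j)$ is a (possibly non-uniform) arithmetic lattice in $\mathrm{Isom}(\IH^{n_j}_{\IR})$, and the question becomes one of invoking the right cubulation result.

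First I would recall the Bergeron--Haglund--Wise cubulation from \cite{BHW11}: they construct, for any arithmetic real-hyperbolic lattice $\Gamma$, a cocompact (in the uniform case) or finite covolume (in the non-uniform case) action of $\Gamma$ on a CAT(0) cube complex via a system of totally geodesic codimension-one arithmetic hyperplane subgroups, and then invoke the virtually special technology to show the quotient complex is virtually special. For the uniform case this immediately gives virtually compact special. In our setting the pieces have cusps, so one needs the non-uniform refinement developed in Section 9 of \cite{BHW11} and Section 4 of \cite{BW12}, which produces a virtually special but non-compact cube complex.

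The next step is to upgrade the mere virtual specialness of this cube complex to \emph{virtual sparse specialness}, which is exactly the finiteness hypothesis needed in Theorem \ref{vspecial}. For this I would appeal to the analysis of Hruska--Wise \cite{HruW14}: by Corollary 7.15 of \cite{HruW14} the cube complex arising from the Bergeron--Haglund--Wise construction is relatively compact in the non-uniform case, and by Theorem 7.12 of \cite{HruW14} one may choose the cubulation of the peripheral (toral cusp) subgroups so that the resulting complex is sparse. Combined with the virtual specialness from \cite{BHW11}, this shows that $\pi_1(V_j)$ is virtually sparse special for each arithmetic piece $V_j$.

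With the hypothesis of Theorem \ref{vspecial} verified, the conclusion $\pi_1(M)$ is residually finite follows at once. I do not expect any real obstacle here since the corollary is essentially a bookkeeping exercise combining \cite{BHW11}, \cite{HruW14}, and Theorem \ref{vspecial}; the only mild subtlety is making sure one picks the peripheral cubulation correctly so that sparseness (not merely relative compactness) holds, but this is precisely what Theorem 7.12 of \cite{HruW14} provides.
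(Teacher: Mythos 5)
Your proposal is correct and follows exactly the paper's own route: it verifies the virtual sparse specialness of each arithmetic piece by invoking Bergeron--Haglund--Wise (with the non-uniform refinements from Section 9 of \cite{BHW11} and Section 4 of \cite{BW12}) together with Hruska--Wise (Corollary 7.15 and Theorem 7.12 of \cite{HruW14}), and then applies Theorem \ref{vspecial}. There is no substantive difference from the argument given in the paper.
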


Corollary \ref{arithmeticrf} together with Theorem \ref{real} implies Theorem \ref{arithmetic}.


\subsection{Hempel's Criterion and the Cusp Covering Conjecture}

The virtually special machinery cannot be applied in the complex-hyperbolic setting, see Remark 4.4 in \cite{BW12}. In order to produce examples of pure complex-hyperbolic higher graph manifolds with residually finite fundamental group, we extend Hempel's approach to residual finiteness for 3-manifold groups to certain classes of higher graph manifolds.  

Hempel's proof has two main ingredients, separability and compatibility. Compatibility is a condition which allows a collection of covers of the geometric pieces to be assembled into a cover of the whole manifold. When combined with separability of the boundary subgroups, any loop in the manifold can be lifted to a path in the finite cover assembled in this way, which shows that the fundamental group is residually finite. We now describe these conditions in more detail.

Let $X$ be a graph of spaces (see definition \ref{graphofspaces}), and let $V_1,..., V_m$ denote the vertex spaces of $X$. For each $V_i$,  Let $\overline{V}_i$ denote a covering space of $V_i$ with covering map $p_i\colon\overline{V}_i\to V_i$. The collection of covers $\overline{V}_1,...,\overline{V}_m$ is called \emph{compatible} if whenever $V_i$ and $V_j$ are adjacent vertex spaces glued along an edge space $E$,  then, for each component $\overline{E}$ of $p_i^{-1}(E)$ and each component $\overline{E}'$ of $p_j^{-1}(E)$, the induced covers $\overline{E}\to E$ and $\overline{E}'\to E$ are equivalent. Note that this implies in particular that for any two components of $ p_i^{-1}(E)$ the covering maps to $E$ must be equivalent.

Any compatible collection of finite covers can be assembled into a finite cover of $X$, see Theorem 2.2 in \cite{Hempel}.

\begin{theorem}\label{compcovers}
Let $X$ be a graph of spaces and let $V_1,..., V_m$ denote the vertex spaces of $X$. Let $\overline{V}_1,...,\overline{V}_m$ be a compatible collection of finite covers. Then $X$ has a finite cover $\overline{X}$ which is a graph of spaces where each vertex space is a copy of some $\overline{V}_i$.
\end{theorem}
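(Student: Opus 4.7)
The plan is to build $\overline{X}$ by taking a balanced number of copies of each $\overline{V}_i$ and gluing their boundary components along the equivalences of edge covers supplied by compatibility. First I would extract the relevant numerical data. For each edge $e$ of the underlying graph $\Gamma$ with endpoints $u,v$ (allowing $u=v$ with distinct attaching maps), compatibility says that every component of $p_u^{-1}(f_{e,u}(E_e))$ and every component of $p_v^{-1}(f_{e,v}(E_e))$ is equivalent, as a cover of $E_e$, to a common finite cover $\overline{E}_e\to E_e$ of some degree $d_e$. Writing $d_i$ for the degree of $p_i\colon\overline{V}_i\to V_i$, multiplicativity of covering degrees then forces the number of components of $p_u^{-1}(f_{e,u}(E_e))$ to equal $d_u/d_e$, and similarly $d_v/d_e$ on the $v$-side.

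Next I would take enough copies of each piece to balance the two sides of every edge. Let $D=\mathrm{lcm}(d_1,\dots,d_m)$ and take $N_i := D/d_i$ disjoint copies of $\overline{V}_i$. Then above each edge $e$ the total number of boundary components on each side equals $N_u(d_u/d_e)=D/d_e=N_v(d_v/d_e)$, so the two sides match. For each edge $e$ I would choose an arbitrary bijection between the $D/d_e$ boundary components on the $u$-side and those on the $v$-side, and for each matched pair pick a homeomorphism realizing the equivalence of covers with $\overline{E}_e\to E_e$, and glue along it. Call the resulting space $\overline{X}$.

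By construction every vertex space of $\overline{X}$ is a copy of some $\overline{V}_i$, so the graph-of-spaces structure is clear. The natural projection $\overline{X}\to X$, defined as $p_i$ on each copy of $\overline{V}_i$ and compatible with the gluings by the choice of equivalence-realizing homeomorphisms, is a covering map of finite degree $D$: local triviality in the interior of each vertex piece comes from local triviality of the $p_i$'s, and across each seam it follows from the fact that both sides of every internal wall are glued by the same equivalence class of cover of $E_e$, so the covering charts extend across the seam.

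The main technical obstacle is the bookkeeping at loop edges ($u=v$) and wherever several edges share a single vertex, where one needs to be sure the many independent gluing choices do not conflict with one another. However, the uniform choice $N_i=D/d_i$ enforces the only consistency constraint (namely, matching the boundary-component counts on the two sides of each edge), so the bijections and equivalences at distinct edges can be chosen entirely independently, and no global obstruction arises. If $X$ was assumed connected and a connected cover is desired, one may pass to any connected component of $\overline{X}$ at the end.
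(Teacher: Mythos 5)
Your proof is correct and follows essentially the same balancing-and-gluing argument that Hempel gives for Theorem~2.2 of his paper (the paper here cites Hempel rather than reproving the result): compatibility forces every boundary component of $\overline{V}_i$ over $E_e$ to cover $E_e$ with a common degree $d_e$ dividing $d_i$, so taking $D/d_i$ copies of $\overline{V}_i$ for a common multiple $D$ of the $d_i$ equalizes the boundary counts on both sides of each edge, and gluing along the equivalences of edge covers yields a degree-$D$ covering graph of spaces.
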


By the Seifert-Van Kampen Theorem $\pi_1(X)$ splits as graph of groups with vertex groups given by fundamental groups of the vertex spaces and edge groups given by fundamental groups of the edge spaces, see Section 2.3 in \cite{LafontBook}. Dual to this graph of groups decomposition of $\pi_1(X)$ is an action of $\pi_1(X)$ on a tree, called the Bass-Serre tree of the splitting. In this case the Bass-Serre tree can be identified with the tree $T$ coming from the tree of spaces structure on the universal cover $\widetilde{X}$, with the action of $\pi_1(X)$ on $T$ induced by the action of $\pi_1(X)$ on $\widetilde{X}$. Each vertex of $T$ has a stabilizer which is conjugate to a vertex group in the graph of groups and each adjacent edge stabilizer is conjugate to an adjacent edge group.

Next, we give an algebraic version of the above compatibility condition in terms of this graph of groups structure. Suppose $G$ is a group which splits as a graph of groups with vertex set $V$ and edge set $E$ and let $\{G_v\}_{v\in V}$ and $\{G_e\}_{e\in E}$ denote the corresponding vertex and edge groups in this graph of groups. The graph of groups structure comes with injective homomorphisms $\phi_{e, v}\colon G_e\to G_v$ whenever $v$ is an endpoint of the edge $e$. However, we will identify $G_e$ with the subgroup $\phi_{e, v}(G_e)$ of $G_v$ and drop the maps $\phi_{e, v}$ from the notation. Now collections of finite-index normal subgroups $\{N_v\leq G_v\}_{v\in V}$ and $\{N_e\leq G_e\}_{e\in E}$ are called \emph{compatible} if whenever $v$ is an endpoint of the edge $e$, then $N_v\cap G_e=N_e$. In the case where $G=\pi_1(X)$ for a graph of spaces $X$ and the graph of groups is the one described above,  then given a compatible collection of finite-index normal subgroups, the covers induced by these subgroups will be a compatible collection of covers. 

The second ingredient in Hempel's approach is the \emph{separability} of the boundary subgroups of each geometric piece. This algebraic condition ensures that a loop based at a point on the boundary can be lifted in a finite cover to a path whose initial and terminal points are in different components of the boundary. 

\begin{define}
A subgroup $H\leq G$ is separable in $G$ if for all $g\in G\setminus H$, $G$ has a finite-index normal subgroup $N$ with $g\notin HN$.
\end{define}

We can now state Hempel's criterion for the residual finiteness of a fundamental group of a graph of groups, which is Theorem 3.1  from \cite{Hempel}.

\begin{theorem}\label{HempCrit}
Let $G$ be the fundamental group of a graph of groups with vertex set $V$ and edge set $E$ such that for each vertex group $G_u$ and each $g\in G_u\setminus\{1\}$, there exist  collections of finite index normal subgroups $\{N_v\leq G_v\}_{v\in V}$ and $\{N_e\leq G_e\}_{e\in E}$ which satisfy the following:
\begin{enumerate}
\item The collections $\{N_v\}_{v\in V}$ and $\{N_e\}_{e\in E}$ are compatible.
\item $g\notin N_u$.
\item If $u$ is an endpoint of the edge $e$ and $g\notin G_e$, then $g\notin G_eN_u$. 
\end{enumerate}

Then $G$ is residually finite.
\end{theorem}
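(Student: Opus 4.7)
The plan is to prove residual finiteness of $G$ by showing that every nontrivial $g\in G$ survives in a finite quotient. The key engine is Theorem \ref{compcovers}: a compatible system of finite-index normal subgroups at vertex and edge groups assembles into a finite cover $\overline{X}$ of the graph of spaces $X$, whose fundamental group is a finite-index subgroup of $G$. The argument splits according to whether $g$ is elliptic or hyperbolic on the Bass-Serre tree $T$ associated with the graph-of-groups decomposition of $G$.

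First I would handle the elliptic case. After conjugating $g$ (which does not affect whether $g$ survives in finite quotients of $G$), we may assume $g\in G_u$ for some vertex $u$. The hypothesis directly supplies compatible collections $\{N_v\}_{v\in V}$ and $\{N_e\}_{e\in E}$ with $g\notin N_u$. Applying Theorem \ref{compcovers}, we obtain a finite cover $\overline{X}\to X$. I would choose the basepoint $\tilde{x}_0$ in the cover $\overline{V}_u$ of $V_u$ corresponding to $N_u$. The loop in $X$ representing $g$ lies entirely in $V_u$, so its lift starting at $\tilde{x}_0$ stays in $\overline{V}_u$; this lift closes up if and only if $g\in N_u$, so condition (2) yields $g\notin\pi_1(\overline{X},\tilde{x}_0)$, producing the desired finite-index subgroup missing $g$.

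Second I would handle the hyperbolic case. Write $g$ in Bass-Serre normal form $g=h_0 e_1 h_1\cdots e_n h_n$ tracing the axis of $g$ across $T$. Reducedness of the normal form forces some $h_i\notin G_{e_i}$, and I would apply the hypothesis to $h_i\in G_{v_i}$. Condition (3) then provides precisely the separation that is needed: in $G_{v_i}/N_{v_i}$, the element $h_i$ remains outside the image of $G_{e_i}$. Translating back to the assembled finite cover via Theorem \ref{compcovers}, this forbids the transition factor $h_i$ from being absorbed into the edge-group contribution when we cross $e_i$, so the lifted path of $g$ in $\overline{X}$ must exit rather than return to $\tilde{x}_0$, and again $g\notin\pi_1(\overline{X},\tilde{x}_0)$.

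The main obstacle is the hyperbolic case, specifically arranging a single compatible collection that simultaneously detects the nontriviality of $g$ along its whole axis. One must apply the hypothesis at several vertex elements $h_i$ and intersect the resulting finite-index normal subgroups; condition (3) is the critical ingredient that ensures these intersections remain compatible across edges rather than collapsing edge-group contributions into the chosen normal subgroups. Careful basepoint bookkeeping and an appropriate choice of cyclic segment of the axis (so that a fundamental domain under $\langle g\rangle$ is visible) will also be needed. This is essentially Hempel's classical argument from \cite{Hempel}, adapted to arbitrary graphs of groups.
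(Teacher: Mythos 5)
The paper does not prove this theorem --- it states it as Theorem 3.1 of Hempel \cite{Hempel} and cites it with no argument, so there is no proof in the paper to compare against; I will evaluate your reconstruction on its own.

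Your elliptic case is correct. The hyperbolic case, however, has two genuine gaps. First, the claim ``Reducedness of the normal form forces some $h_i\notin G_{e_i}$'' is false for general graphs of groups. It is true when the underlying graph is a tree (every edge traversal must eventually double back, and then the reduced-word condition forces the intervening vertex element out of the edge group), but once the underlying graph has cycles a cyclically reduced word can consist entirely of stable-letter traversals with every intervening vertex element lying inside an edge group; a single stable letter $t$ of an HNN extension is the simplest example, and then there is no $h_i$ to feed into conditions (2) and (3). The fix is a separate case: if every vertex contribution in the reduced form lies in the relevant edge group, then cyclic reducedness forces the traversed edge word to have no backtracking, so $g$ projects to a nontrivial reduced loop in the underlying graph $\Gamma$ and hence to a nontrivial element of the free group $\pi_1(\Gamma)$, which is residually finite --- here the hypothesis of the theorem is not even invoked. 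Your proposal does not mention this case.

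Second, even once some $h_i\notin G_{e_i}$ is available, the passage from ``$h_i\notin G_{e_i}N_{v_i}$'' to ``the lifted path must exit rather than return to $\tilde{x}_0$'' is an assertion and not a proof. What that inequality actually yields is that the lifted segment through the chamber $\overline{V}_{v_i}$ does not immediately backtrack across the same lifted edge space; non-backtracking inside a single chamber does not by itself prevent the full lifted path from closing up somewhere else in the cover. One must still show, after intersecting the compatible collections obtained from every such $h_i$ (intersection automatically preserves compatibility, and preserves condition (3) since shrinking $N_v$ shrinks $G_eN_v$), that the concatenation of lifted chamber segments and edge crossings ends at a different point of the fiber over the basepoint. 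You flag ``careful basepoint bookkeeping'' and appeal to ``Hempel's classical argument,'' which is honest, but that is precisely where the missing content sits; as written the hyperbolic case is not complete.
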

Hempel showed Theorem \ref{HempCrit} can be applied to any 3-dimensional extended graph manifold \cite{Hempel}, which by the Geometrization Theorem implies that all 3-manifolds have residually finite fundamental group (see Section \ref{revisited} for more details).

Our aim is to apply this theorem to the fundamental groups of higher graph manifolds. A necessary condition for it to apply is that the boundary subgroups of each geometric piece are separable. We show this is indeed the case and moreover that boundary subgroups of any higher graph manifold $M$ are separable in $\pi_1(M)$. Our main tool for this is the following lemma, which is a special case of a well-known algebraic result, see for example the final Proposition in \cite{Long}.
 \begin{lemma}\label{maxnil}
 Let $G$ be residually finite and let $H$ be a maximal abelian or maximal nilpotent subgroup. Then $H$ is separable in $G$.
 \end{lemma}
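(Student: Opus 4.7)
The plan is to show that $H$ equals its closure $\overline{H} := \bigcap_{N\triangleleft_f G} HN$ in the profinite topology on $G$, which for a subgroup is exactly the separability condition. The strategy is to argue that $\overline{H}$ inherits the defining property of $H$ (being abelian, or more generally nilpotent of some class $c$) and then invoke maximality of $H$ to force $\overline{H}=H$. Since the abelian case is subsumed by the nilpotent case with $c=1$, I will treat both simultaneously.

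First I would fix the nilpotency class $c$ of $H$ and aim to show that every left-normed $(c+1)$-fold commutator $[x_1,\ldots,x_{c+1}]$ with $x_i\in\overline{H}$ is trivial. For an arbitrary $N\triangleleft_f G$, the inclusion $x_i\in HN$ lets me write $x_i=h_i n_i$ with $h_i\in H$, $n_i\in N$, so $x_iN=h_iN$ in $G/N$. Normality of $N$ lets commutators descend cleanly to $G/N$, giving
\[
[x_1,\ldots,x_{c+1}]\,N \;=\; [h_1,\ldots,h_{c+1}]\,N \;=\; N,
\]
because $H$ has class $c$. Hence $[x_1,\ldots,x_{c+1}]\in N$ for every finite index normal $N$, and residual finiteness of $G$ forces this commutator to be trivial.

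This exhibits $\overline{H}$ as a nilpotent subgroup of class at most $c$ containing $H$. Since $H$ is maximal among nilpotent (respectively abelian) subgroups of $G$, we conclude $\overline{H}=H$. Separability follows at once: given $g\in G\setminus H=G\setminus\overline{H}$, by definition of $\overline{H}$ there exists some $N\triangleleft_f G$ with $g\notin HN$, which is exactly the defining condition.

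There is no substantive obstacle here; the only subtlety is that commutator relations must pass to quotients $G/N$, which is automatic from normality of $N$ and the recursive definition of left-normed commutators. The proof is purely algebraic and makes no use of the ambient geometry of higher graph manifolds, which is why the lemma applies uniformly to peripheral subgroups in both the real- and complex-hyperbolic settings later in the section.
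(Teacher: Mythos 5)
Your proof is correct, and it takes a genuinely different route from the paper's, although both ultimately rest on the same interaction between left-normed commutators and residual finiteness. The paper argues pointwise: given $g\notin H$, maximality of $H$ supplies $g_1,\dots,g_k\in\langle H,g\rangle$ with $[g_1,\dots,g_k]\neq 1$, residual finiteness then produces an $N$ with $[g_1,\dots,g_k]\notin N$, and one reads off $g\notin HN$ because the image of $H$ in $G/N$ remains nilpotent of class $<k$ while $\langle \bar H,\bar g\rangle$ does not. You argue globally: you show that the profinite closure $\overline H=\bigcap_N HN$ is itself nilpotent of class at most $c$, since every $(c+1)$-fold commutator of its elements becomes trivial in every finite quotient $G/N$ and hence is trivial in $G$ by residual finiteness, and you then invoke maximality exactly once to collapse $\overline H$ to $H$. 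Your version cleanly decouples the two hypotheses (residual finiteness makes $\overline H$ nilpotent; maximality forces $\overline H = H$) and exposes the slightly stronger structural fact that in a residually finite group the profinite closure of a class-$c$ nilpotent subgroup is again nilpotent of class at most $c$, whereas the paper's version is more explicitly constructive, handing you the separating subgroup $N$ for a given $g$. Both arguments silently use the standard fact that $\gamma_{c+1}$ is generated by the left-normed $(c+1)$-fold commutators, so that their vanishing is equivalent to nilpotency class $\le c$; neither proof spells this out, but it is the hinge on which each turns, and it is worth noting that you need the forward direction (vanishing commutators imply $\gamma_{c+1}(\overline H)=1$) while the paper needs the contrapositive (non-nilpotency of $\langle H,g\rangle$ forces some $k$-fold commutator to survive).
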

\begin{proof}
Suppose that $g\in G\setminus H$. Define $[x_1, x_2]=x_1^{-1}x_2^{-1}x_1x_2$ and inductively define $[x_1,...,x_i]=[x_1, [x_2,...,x_i]]$ for all $x_1,...,x_i\in G$ and $i\geq 2$. That is, $[x_1,...,x_i]$ is the $i$-fold commutator. Since $H$ is nilpotent, for some $k\geq 2$ and for all $x_1,...,x_{k}\in H$, $[x_1,...,x_k]=1$. Moreover, since $H$ is maximal nilpotent, there exists $g_1,..., g_k\in\langle H, g\rangle$ with $[g_1,...,g_k]\neq 1$. Since $G$ is residually finite, there exists a finite index normal subgroup $N$ with $[g_1,...,g_k]\notin N$. But the image of $H$ in $G/N$ is still nilpotent of the same degree, hence it cannot contain the image of $g$. Thus $g\notin HN$. 
\end{proof}


\begin{lemma}\label{boundarysep}
Let $M$ be an extended graph manifold or a pure complex-hyperbolic higher graph manifold with boundary. Then for each boundary component $B$, $\pi_1(B)$ is separable in $\pi_1(M)$. 
\end{lemma}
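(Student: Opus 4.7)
The plan is to reduce the statement to an application of Lemma \ref{maxnil} inside the unique piece of $M$ containing $B$ on its boundary, and then to transfer the resulting separability to all of $\pi_1(M)$ via the compatibility machinery of Theorem \ref{compcovers}. Let $V_i$ (respectively $W_i$) denote this piece. Then $\pi_1(B)$ embeds into $\pi_1(V_i)$ as the fundamental group of a horospherical cross-section of a cusp: in the extended graph-manifold case it equals $\IZ^{n-1} \cong \pi_1(\partial \bar V_i) \times \IZ^{n-n_i}$ and is maximal abelian in $\pi_1(V_i) \cong \pi_1(\bar V_i) \times \IZ^{n-n_i}$, while in the pure complex-hyperbolic case it is a uniform lattice in the $(2n-1)$-dimensional Heisenberg group $\mathcal{H}^{2n-1}$ and is maximal nilpotent in the non-uniform lattice $\pi_1(W_i) \leq \textrm{PU}(n,1)$.

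Since $\pi_1(V_i)$ (resp.\ $\pi_1(W_i)$) is linear and hence residually finite, Lemma \ref{maxnil} immediately yields separability of $\pi_1(B)$ in this single vertex group. To extend this to separability in $\pi_1(M)$, given $g \in \pi_1(M) \setminus \pi_1(B)$, I would consider the Bass-Serre tree $T$ of the graph-of-groups decomposition of $\pi_1(M)$ and let $v_0 \in T$ be the vertex stabilized by $\pi_1(V_i)$. If $g$ fixes $v_0$, so $g \in \pi_1(V_i) \setminus \pi_1(B)$, the previous step supplies a finite-index normal subgroup $N_i \triangleleft \pi_1(V_i)$ with $g \notin \pi_1(B) \cdot N_i$; if $g$ moves $v_0$, then $g$ acts non-trivially on $T$, and one first passes to a finite quotient of $\pi_1(M)$ associated with a finite cover of the underlying graph that distinguishes $v_0$ from $g\cdot v_0$. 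In either case, one must produce a compatible collection $\{N_v\}$, $\{N_e\}$ of finite-index normal subgroups on the vertex and edge groups (in the sense defined before Theorem \ref{HempCrit}), agreeing with $N_i$ on $\pi_1(V_i)$; Theorem \ref{compcovers} then assembles these into a finite cover of $M$, giving a finite-index subgroup $N \leq \pi_1(M)$ containing $\pi_1(B)$ but avoiding $g$.

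The principal obstacle is the construction of this compatible collection: the requirement $N_v \cap G_e = N_e$ must hold at every incidence of the finite underlying graph, and refining any single $N_v$ immediately propagates constraints to every neighbouring vertex group via the edge groups. The refinement can nonetheless be carried out by adapting Hempel's original 3-dimensional argument, since every edge group $G_e$ (a flat torus subgroup in the real case, or a lattice in $\mathcal{H}^{2n-1}$ in the complex case) is again a maximal nilpotent subgroup of each adjacent vertex group and is therefore separable there by a second application of Lemma \ref{maxnil}. Iterating separability along the finitely many edges of the graph, and exploiting the cofinal lattice of finite-index subgroups of each torus or Heisenberg edge group, one achieves the required compatibility without disturbing the initial condition $g \notin \pi_1(B)\cdot N_i$ on $\pi_1(V_i)$.
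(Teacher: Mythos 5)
Your first step---showing that $\pi_1(B)$ is maximal abelian (resp.\ maximal nilpotent) in the single vertex group $\pi_1(V_i)$ (resp.\ $\pi_1(W_i)$) and hence separable \emph{there} by Lemma~\ref{maxnil}---is correct and agrees with the start of the paper's proof. The gap is in the transfer step. You want to promote separability in the vertex group to separability in $\pi_1(M)$ by producing a compatible collection $\{N_v\}$, $\{N_e\}$ satisfying $N_v\cap G_e = N_e$ at every incidence, and you assert this ``can be carried out by adapting Hempel's original 3-dimensional argument'' because the edge groups are again separable in each adjacent vertex group. But separability is far weaker than what compatibility demands: separability lets you avoid a finite set of elements with \emph{some} finite-index subgroup, whereas compatibility requires you to prescribe the intersection $N_v\cap G_e$ exactly, and to do so simultaneously from both sides of every edge. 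In higher dimensions this is precisely the content of the cusp covering conjecture of Behrstock--Neumann, which the paper explicitly records as open (Proposition~\ref{ccc->rf} derives residual finiteness \emph{from} that conjecture, not the other way around). Hempel's three-dimensional argument succeeds because of features special to rank-one cusps of hyperbolic $3$-manifolds, and exactly this step is what is not known to generalize. Your proposal therefore implicitly assumes the open conjecture you would need to prove.

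The paper sidesteps this entirely by proving a strictly stronger group-theoretic fact: $\pi_1(B)$ is maximal abelian (resp.\ maximal nilpotent) not merely in the vertex group but in the whole of $\pi_1(M)$. The Bass--Serre tree is used for this purpose, not to build covers: a non-trivial $g\in\pi_1(B)$ (or a non-trivial central element $z$ of any nilpotent overgroup, in the complex-hyperbolic case) fixes a unique vertex $v$, and anything commuting with it must also fix $v$, hence lies in the vertex group, where maximality of $\pi_1(B)$ is already known. Lemma~\ref{maxnil} is then applied directly with $G=\pi_1(M)$, and no compatible collection of covers is ever constructed. (Note that this use of Lemma~\ref{maxnil} tacitly uses residual finiteness of $\pi_1(M)$ itself, which is indeed hypothesized in every application of Lemma~\ref{boundarysep} in the paper, namely Theorem~\ref{doublerf} and Proposition~\ref{handles}.) In short: you need the maximality argument to run one level higher, in $\pi_1(M)$, rather than in the vertex group followed by an unavailable gluing step.
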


\begin{proof}
We first consider the case where $M$ is an extended graph manifold. In this case, $B$ is the boundary component of some piece $N\cong V\times T^m$, where $V$ is a compact manifold with toral boundary whose interior admits a finite volume real-hyperbolic metric, and $B=C\times T^m$, where $C$ is a toral boundary component of $V$. Hence $B$ is a torus and $\pi_1(B)\cong \pi_1(C)\times \pi_1(T^m)$, is abelian.

Let $v$ be the vertex in the Bass-Serre tree stabilized by $\pi_1(N)$. If $e$ is an edge adjacent to $v$, then its stabilizer is conjugate to $\pi_1(C')\times \pi_1(T^m)$ where $C'\times T^m$ is the boundary component corresponding to the edge $e$. $\pi_1(C)$ has trivial intersection with any conjugate of $\pi_1(C')$, since these are distinct maximal parabolic subgroups inside $\pi_1(V)$. Hence if $g$ is a non-trivial element of $\pi_1(C)$, then $v$ is the unique vertex fixed by $g$. Now if $h$ is any element of $\pi_1(M)$ which commutes with $g$, then $ghv=hgv=hv$, and since the fixed point of $g$ is unique, we have $hv=v$. In particular, this implies that $h\in \pi_1(N)$. Now, since $\pi_1(C)$ is maximal parabolic subgroup of $\pi_1(V)$, it is also maximal abelian subgroup in $\pi_1(V)$ and hence we have that $\pi_1(C)\times\pi_1(T^m)\cong\pi_1(B)$ is a maximal abelian subgroup of $\pi_1(N)$. Hence $\pi_1(B)$ is also maximal abelian in $\pi_1(M)$ and is thus separable in $\pi_1(M)$ by Lemma \ref{maxnil}.

Now suppose that $M$ is a pure complex-hyperbolic higher graph manifold. In this case, $B$ is the boundary component of a piece $W$ which is a compact manifold with nilmanifold boundary whose interior admits a finite volume complex-hyperbolic metric. As in the previous case $v$ is the unique vertex fixed by any non-trivial element of $\pi_1(B)$. Let $K$ be a nilpotent subgroup of $\pi_1(M)$ which contains $\pi_1(B)$. Let $z$ be a non-trivial element of the center of $K$. 

Since $z$ commutes which each element of $\pi_1(B)$, $z$ also fixes $v$, hence $z$ is contained in $\pi_1(W)$.  Since $\pi_1(B)$ is a maximal parabolic subgroup of $\pi_1(W)$, it is a maximal nilpotent subgroup of $\pi_1(W)$, so $z\in \pi_1(B)$. It follows that $v$ is the unique fixed point of $z$ and since each element of $K$ commutes with $z$, we have that $K$ stabilizes $v$ and so $K\leq \pi_1(W)$. Since $\pi_1(B)$ is a maximal nilpotent subgroup of $\pi_1(W)$, $K=\pi_1(B)$ and so $\pi_1(B)$ is also maximal nilpotent in $\pi_1(M)$. Again, this implies that $\pi_1(B)$ is separable by Lemma \ref{maxnil}.
\end{proof}

While separability can be generalized to higher dimensions as above, Hempel's proof that compatible normal subgroups can be chosen is quite special to the 3-dimensional setting. The problem of funding such subgroups in higher dimensions is closely related to the cusp covering conjecture of Behrstock-Neumann \cite{BN12}. We give a slightly stronger version of that conjecture since  Behrstock-Neumann do not ask for the covers to be regular. 

\begin{conjecture}[Cusp covering conjecture]\cite{BN12}
Let $M$ be a hyperbolic n-manifold. Then for each cusp $C$ of M there exists a sublattice $\Lambda_C$ of $\pi_1(C)$ such that, for any choice of a sublattice $\Lambda'_C
\subseteq \Lambda_C$ for each $C$, there exists a finite regular cover $M'$ of $M$
whose cusps covering each cusp $C$ of $M$ are the covers determined by $\Lambda'_C$.
\end{conjecture}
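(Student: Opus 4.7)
The plan is to attack the conjecture via its algebraic reformulation: a regular finite cover $M' \to M$ corresponds to a finite-index normal subgroup $N \triangleleft \pi_1(M)$, and the cusp cover of $C$ induced by $M'$ is determined by the subgroup $N \cap \pi_1(C) \leq \pi_1(C)$. So the task is, for each cusp $C$, to choose a sublattice $\Lambda_C \leq \pi_1(C)$ such that for any further $\Lambda'_C \subseteq \Lambda_C$ one can produce a single finite-index $N \triangleleft \pi_1(M)$ with $N \cap \pi_1(C) = \Lambda'_C$ simultaneously for every cusp.

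A natural first step is to pass to a finite cover so that each cusp subgroup $\pi_1(C)$ is torsion-free, hence a free abelian lattice (real hyperbolic case) or a torsion-free nilpotent lattice (complex hyperbolic case). I would then fix a sufficiently deep finite-index normal subgroup $N_0 \triangleleft \pi_1(M)$ -- for instance a principal congruence subgroup in the arithmetic case -- and define $\Lambda_C := \pi_1(C) \cap N_0$. Restricting attention to candidate covers $N \leq N_0$ then automatically gives $N \cap \pi_1(C) \subseteq \Lambda_C$, so the remaining problem is to realize an arbitrary $\Lambda'_C \subseteq \Lambda_C$ as an equality.

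The core step is an induced separability statement: produce a finite quotient $\phi \colon \pi_1(M) \to Q$ whose restriction to $\pi_1(C)$ has kernel exactly $\Lambda'_C$, for every cusp at once. In the virtually (sparse) special setting I would try to deploy the Haglund--Wise canonical completion, which promotes local covers of a CAT(0) cube complex to finite covers of the whole complex while preserving prescribed covers of convex subcomplexes; in the arithmetic setting I would instead combine principal congruence reduction at moduli tailored to each peripheral lattice. The multi-cusp coordination is then accomplished by intersecting the resulting finite-index normal subgroups inside $N_0$ and exploiting the fact, implicit in the proof of Lemma \ref{boundarysep}, that distinct maximal peripheral subgroups have trivial conjugate intersections in $\pi_1(M)$.

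The decisive obstacle, and the reason the conjecture remains open, is the exactness requirement. Containment $\Lambda'_C \subseteq N$ is easy to arrange, but forcing every element of $\pi_1(C) \setminus \Lambda'_C$ to survive in $\pi_1(M)/N$ is genuinely difficult. In the complex hyperbolic case, where the cusps are nilpotent of class at least two, this amounts to realizing a prescribed finite nilpotent subquotient of $\pi_1(M)$ containing $\pi_1(C)/\Lambda'_C$ as a subgroup, which is strictly stronger than residual finiteness of $\pi_1(M)$. I would expect any serious attack here to require new ideas, perhaps combining the geometric flexibility of Dehn-filling-type deformations with the separability afforded by virtual specialness or an arithmetic congruence structure.
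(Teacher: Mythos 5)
The statement you were asked to ``prove'' is, as its environment says, a \emph{conjecture} --- it is not a theorem of the paper, and the paper supplies no proof of it. The authors merely record the cusp covering conjecture (in a slightly strengthened form compared to Behrstock--Neumann's original, since they additionally require the cover to be regular) and then show in Proposition~\ref{ccc->rf} that its truth would imply residual finiteness of $\pi_1$ for pure real-hyperbolic extended graph manifolds. There is therefore nothing in the paper to compare your attempt against, and you were not expected to supply a proof.

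That said, your write-up handles this correctly: you do not claim to have proved the conjecture, and you locate the genuine obstruction. Your observation that containment $\Lambda'_C \subseteq N$ is cheap while the exactness $N \cap \pi_1(C) = \Lambda'_C$, coordinated across all cusps with $N$ normal in $\pi_1(M)$, is the real difficulty matches the state of the art; this is an omnipotence-type statement strictly stronger than separability of the individual peripheral subgroups. Your suggestion to route through Haglund--Wise canonical completion/retraction for virtually special lattices is close in spirit to what the paper actually does for arithmetic pieces: rather than proving the conjecture, Theorem~\ref{vspecial} invokes the Huang--Wise separability-and-combination theorem directly, obtaining residual finiteness without full exact control of cusp covers. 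Your remark that the complex-hyperbolic (nilpotent cusp) case is harder and that virtual specialness is unavailable there is also consistent with the paper, which cites Remark~4.4 of \cite{BW12} for exactly this point. In short, you were handed an open conjecture, you correctly identified it as open, and your assessment of where the difficulty lies is sound.
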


\begin{proposition}\label{ccc->rf}
Let $M$ be a pure real-hyperbolic extended graph manifold. Then the cusp covering conjecture implies that $\pi_1(M)$ is residually finite. 
\end{proposition}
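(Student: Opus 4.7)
The plan is to verify Hempel's criterion (Theorem \ref{HempCrit}) for the graph of groups decomposition of $\pi_1(M)$. Fix a vertex $u$ and an element $g\in G_u\setminus\{1\}$; I need to produce compatible collections of finite-index normal subgroups $\{N_v\triangleleft G_v\}_{v\in V}$ and $\{N_e\triangleleft G_e\}_{e\in E}$ such that $g\notin N_u$ and, for every edge $e$ incident to $u$ with $g\notin G_e$, also $g\notin G_eN_u$.

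First I would exploit that each vertex group $G_v$ is a non-uniform lattice in $\mathrm{Isom}(\mathbb{H}^{n_v}_{\mathbb{R}})$, hence residually finite, and that each edge group $G_e\leq G_v$, being a maximal parabolic, is maximal abelian in $G_v$. Lemma \ref{maxnil} then gives separability of $G_e$ in $G_v$. Intersecting finitely many finite-index normal subgroups of $G_u$ therefore produces $K_u\triangleleft G_u$ of finite index with $g\notin K_u$ and $g\notin G_eK_u$ for every edge $e$ adjacent to $u$ with $g\notin G_e$.

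Next I would prescribe the edge sublattices. The cusp covering conjecture applied to each piece $V_v$ yields a base sublattice $\Lambda_{v,C}$ of every cusp subgroup of $V_v$. For each edge $e$ with endpoints $u_1,u_2$ (possibly $u_1=u_2$, in the case of a self-loop), the graph-of-spaces gluing identifies the two adjoined cusp subgroups with a common edge group $G_e$. I pick a finite-index subgroup $N_e\leq G_e$ contained in the intersection of the two corresponding base sublattices, with the further requirement that $N_e\subseteq K_u\cap G_e$ whenever $u$ is an endpoint of $e$; this is possible because each $G_e$ is finitely generated free abelian, so any finite collection of finite-index subgroups has finite-index intersection. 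Applying the cusp covering conjecture at each vertex $v$ with prescribed cusp data $\{N_e:e\text{ incident to }v\}$ then yields a finite regular cover of $V_v$, equivalently a finite-index normal subgroup $N_v^{(0)}\triangleleft G_v$ with $N_v^{(0)}\cap G_e=N_e$ for every edge $e$ adjacent to $v$.

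Finally I set $N_v:=N_v^{(0)}$ for $v\neq u$ and $N_u:=N_u^{(0)}\cap K_u$. Since each $N_e$ adjacent to $u$ was chosen inside $K_u$, one still has $N_u\cap G_e=N_e$, so the compatibility condition of Hempel's criterion holds; moreover $g\notin K_u\supseteq N_u$, and $G_eN_u\subseteq G_eK_u$ forces $g\notin G_eN_u$ whenever $g\notin G_e$, so Theorem \ref{HempCrit} gives residual finiteness of $\pi_1(M)$. The main obstacle I anticipate is the simultaneous control of cusp sublattices across all edges of the underlying graph together with the exclusion of the witness $g$: tightening one vertex cover to avoid $g$ could a priori disrupt compatibility at distant edges. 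The cusp covering conjecture is precisely what sidesteps this, since it permits independent prescription of cusp sublattices at each piece (below a fixed base) that is realized by a regular cover; hence a single refinement of each edge sublattice by $K_u$ can be propagated globally as the prescribed cusp data.
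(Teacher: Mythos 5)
Your proposal is correct and follows essentially the same strategy as the paper's proof: intersect the two base sublattices of the cusp covering conjecture at each edge, use residual finiteness and maximal-abelian separability (Lemma \ref{maxnil}) to build $K_u$ excluding the witness $g$, and then invoke the cusp covering conjecture vertex by vertex to propagate a compatible family of finite-index normal subgroups so that Hempel's criterion (Theorem \ref{HempCrit}) applies. The only difference is purely organizational — you fix all edge sublattices globally (refining them by $K_u$ near $u$) before applying the conjecture at every vertex, whereas the paper first produces $N_u$ at the distinguished vertex and then defines the adjacent $N_e$ as intersections — but the two orderings yield the same conclusion.
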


\begin{proof}


Consider the splitting of $\pi_1(M)$ as a graph of groups. Let $G_e$ be an edge group with adjacent vertex groups $G_v$ and $G_u$. The cusp covering conjecture provides $G_e$ with two finite index subgroups $\Lambda_1$ and $\Lambda_2$ where $\Lambda_1$ comes from considering $G_e$ as a cusp subgroup of $G_v$ and $\Lambda_2$ comes from considering $G_e$ as a cusp subgroup of $G_u$. Define $\Lambda_e=\Lambda_1\cap \Lambda_2$.

Now let $G_u$ be a vertex group and let $g\in G_u\setminus \{1\}$. Since $G_u$ is residually finite, there exists a finite index normal subgroup $K\leq G_u$ with $g\notin K$. For each adjacent edge group $G_e$ embedded in $G_u$, there is a finite index normal subgroup $K_e$ with $g\notin G_eK_e$ whenever $g\notin G_e$ (if $g\in G_e$ then we define $K_e=G_u$). Define $K_u$ to be the intersection of $K$ and each $K_e$. Then $K_u$ is a finite index normal subgroup of $G$. 

Now for each adjacent edge group $G_e$, the cusp covering conjecture says that $G_u$ contains a finite index normal subgroup $H$ with $H\cap G_e=\Lambda_e$. Finally, let $N_u=H\cap K_u$. Now $N_u$ is a finite index normal subgroup of $G$ such that $g\notin N_u$, $g\notin G_eN_u$ whenever $g\notin G_e$, and $N_u\cap G_e\leq \Lambda_e$ for all adjacent edges $e$. For each of these edges, let $N_e=N_u\cap G_e$.

Now for each edge group $G_e$ which is not adjacent to $G_v$, we define $N_e=\Lambda_e$. For each other vertex group $G_v$, by the cusp covering conjecture there exists a finite index normal subgroup $N_v\leq G_v$ such tha for each edge group $G_e$ adjacent to $G_v$, $N_v\cap G_e=N_e$. Hence the collections $\{G_v\}_{v\in V}$ and $\{G_e\}_{e\in E}$ satisfy the hypothesis of Theorem \ref{HempCrit}, so $\pi_1(M)$ is residually finite.


\end{proof}

While Behrstock-Neumann only considered real-hyperbolic manifolds, there is a natural analogue of this conjecture for complex-hyperbolic manifolds. The proof of Proposition \ref{ccc->rf} can be adapted in the obvious way to show that this would imply that pure complex-hyperbolic higher graph manifolds have residually finite fundamental group.


\begin{proposition}
Let $M$ be a pure complex-hyperbolic higher graph manifold. Then the complex-hyperbolic analogue of the cusp covering conjecture implies that $\pi_1(M)$ is residually finite. 
\end{proposition}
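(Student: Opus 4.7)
The plan is to adapt the proof of Proposition \ref{ccc->rf} essentially verbatim to the complex-hyperbolic setting, with the Heisenberg nilmanifold cusps replacing the toral cusps of the real-hyperbolic case. The splitting of $\pi_1(M)$ as a graph of groups has vertex groups $G_v = \pi_1(W_v)$ that are complex-hyperbolic lattices in $\mathrm{PU}(n,1)$, and edge groups $G_e$ that are lattices of translations in $\mathcal{H}^{2n-1}$, i.e., finitely generated torsion-free nilpotent groups. Both classes are linear, hence residually finite by Mal'cev.

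First I would apply the complex-hyperbolic cusp covering conjecture to each edge group: for every edge $e$ with adjacent vertex groups $G_u,G_v$, the conjecture furnishes finite-index subgroups of $G_e$ coming from each side, and their intersection gives a preferred finite-index subgroup $\Lambda_e \leq G_e$ which the conjecture guarantees can be realized as the intersection of a finite-index normal subgroup of either adjacent vertex group with $G_e$. Next, for a fixed vertex group $G_u$ and a chosen nontrivial element $g \in G_u\setminus\{1\}$, I would build the required collection of compatible normal subgroups as follows. Residual finiteness of $G_u$ yields a finite-index normal subgroup $K \leq G_u$ with $g \notin K$. For each edge $e$ adjacent to $u$ with $g \notin G_e$, Lemma \ref{boundarysep} together with Lemma \ref{maxnil} (applied to $G_e$ as a maximal nilpotent parabolic subgroup of $G_u$, which is exactly the content of the proof of Lemma \ref{boundarysep} in the complex case) gives a finite-index normal $K_e \leq G_u$ with $g \notin G_e K_e$; set $K_e = G_u$ otherwise. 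The cusp covering conjecture then supplies a finite-index normal $H \leq G_u$ with $H \cap G_e = \Lambda_e$ for every adjacent $e$, and I set $N_u := H \cap K \cap \bigcap_e K_e$.

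Define $N_e := N_u \cap G_e$ for the edges adjacent to $u$, and $N_e := \Lambda_e$ for all remaining edges. For every other vertex $v$, another application of the cusp covering conjecture yields a finite-index normal $N_v \leq G_v$ whose intersection with each adjacent edge group $G_e$ is exactly the previously selected $N_e$. By construction the collections $\{N_v\}_{v\in V}$ and $\{N_e\}_{e\in E}$ are compatible, $g \notin N_u$, and $g \notin G_e N_u$ whenever $g \notin G_e$, so the hypotheses of Hempel's criterion (Theorem \ref{HempCrit}) are satisfied and $\pi_1(M)$ is residually finite.

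The hardest step is of course the only one that actually uses the conjecture: simultaneously realizing prescribed sublattice intersections $N_v \cap G_e = N_e$ on \emph{all} adjacent cusp subgroups of a single vertex group $G_v$. Everything else is a routine translation of the real-hyperbolic argument, with the observation that the switch from abelian to nilpotent causes no difficulty because the two ingredients we borrow—separability of maximal nilpotent subgroups of residually finite groups (Lemma \ref{maxnil}) and maximality of the cusp subgroup inside $\pi_1(W_v)$ (Lemma \ref{boundarysep})—were already proved in the nilpotent generality needed here.
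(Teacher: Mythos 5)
Your proposal is correct and is exactly the ``obvious adaptation'' the paper alludes to: the paper gives no separate proof for the complex-hyperbolic case, simply stating that the proof of Proposition \ref{ccc->rf} carries over, and you have spelled out that adaptation faithfully. The only point worth flagging is a small imprecision in how you invoke the lemmas: what the argument actually needs is separability of $G_e$ inside the vertex group $G_u$ (not inside all of $\pi_1(M)$, which is what Lemma \ref{boundarysep} asserts), and this follows from Lemma \ref{maxnil} applied directly to the residually finite group $G_u$ together with the observation, made inside the proof of Lemma \ref{boundarysep}, that the cusp subgroup is maximal nilpotent in $\pi_1(W_u)$.
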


\subsection{Iterated Partial Doubles}

Without the cusp covering conjecture we are still able to construct infinite families of higher graph manifolds with residually finite fundamental group. This construction is based on the following algebraic lemma.

Let $G$ be a group and $\mathcal{H}=\{H_1,...,H_m\}$ a collection of subgroups of $G$. Let $G\ast_{\mathcal{H}} G$ denote the fundamental group of the graph of groups with two vertices  both labeled by $G$ and one edge labeled by each $H_i$.

\begin{lemma}\label{algebraic double}
Let $G$ be a residually finite group and $\mathcal{H}$ a finite collection of separable subgroups of $G$. Then $G\ast_\mathcal H G$ is residually finite.
\end{lemma}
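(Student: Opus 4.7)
The plan is to apply Hempel's criterion (Theorem \ref{HempCrit}) to the natural graph-of-groups decomposition of $G \ast_{\mathcal H} G$. This decomposition has two vertices $u, v$ with vertex groups $G_u = G_v = G$ and, for each $H_i \in \mathcal H$, a single edge $e_i$ joining $u$ to $v$ with edge group $H_i$; both endpoint inclusions of $e_i$ are the given embedding $H_i \hookrightarrow G$.

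Fix a vertex, say $G_u = G$, and let $g \in G \setminus \{1\}$; the argument for $G_v$ is identical by symmetry. Using residual finiteness of $G$, I would first pick a finite-index normal subgroup $K_0 \triangleleft G$ with $g \notin K_0$. For each index $i$ with $g \notin H_i$, separability of $H_i$ in $G$ provides a finite-index normal subgroup $K_i \triangleleft G$ with $g \notin H_i K_i$; for the remaining indices I simply set $K_i = G$. Let $N = K_0 \cap K_1 \cap \cdots \cap K_m$, a finite-index normal subgroup of $G$.

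I then take the \emph{same} $N$ on both vertices, $N_u = N_v = N$, and set $N_{e_i} = N \cap H_i$ for each edge. Compatibility is immediate, since for either endpoint of $e_i$ the required intersection $N_v \cap H_i$ equals $N \cap H_i = N_{e_i}$. By construction $g \notin N_u$; and for each $i$ with $g \notin H_i$ one has $H_i N_u \subseteq H_i K_i$, so $g \notin H_i N_u$. Thus all three hypotheses of Theorem \ref{HempCrit} are met, and $G \ast_{\mathcal H} G$ is residually finite.

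There is no serious obstacle; the only substantive point is the decision to take $N_u = N_v$ rather than two independent finite-index normal subgroups of the two copies of $G$. With independent choices one would have to force their restrictions to each $H_i$ to coincide, which is precisely the coordination problem underlying the cusp covering conjecture discussed earlier in this section. For $G \ast_{\mathcal H} G$ the built-in symmetry of the construction renders that problem vacuous, and separability of each $H_i$ does the rest.
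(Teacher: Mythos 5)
Your proof is correct and matches the paper's argument essentially step for step: both construct the same intersection $N$ of normal subgroups furnished by residual finiteness and separability, place the same copy of $N$ on each vertex, take $N\cap H_i$ on each edge, and invoke Theorem \ref{HempCrit}. Your closing remark about why symmetry makes the coordination problem vacuous is a nice aside but not a departure from the paper's route.
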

\begin{proof}
We will show that $G$ is residually finite using Theorem \ref{HempCrit}. To that end, let $g$ be a non-trivial element of $G$. Since $G$ is residually finite, there exists a finite index normal subgroup $N_0$ with $g\notin N_0$. Let $\mathcal H=\{H_1,..., H_m\}$; by separability for each $1\leq i\leq m$ there exists a finite index normal subgroup $N_i$ with $g\notin H_iN_i$ whenever $g\notin H_i$ (if $g\in H_i$ then we define $N_i=G$). Define
\[
N=\bigcap_{i=0}^m N_i
\]
and note that $N$ is a finite index normal subgroup of $G$. We now choose a copy of $N$ inside each vertex group of  $G\ast_\mathcal H G$ and choose $N\cap H_i$ in the edge group labeled by $H_i$. This is a compatible collection of subgroups for this splitting. Since $g\notin N_0$ and $N\subseteq N_0$, we have $g\notin N$. Similarly, whenever $g\notin H_i$ we have $g\notin H_iN_i$ and hence $g\notin H_iN$. Thus, $N$ satisfies the hypothesis of Theorem \ref{HempCrit}, so $G\ast_\mathcal H G$ is residually finite.

\end{proof}

Let $M$ be a higher graph manifold and let $\mathcal B=\{B_1,...,B_m\}$ be a set of components of the boundary $\partial M$. We call the manifold obtained by taking two copies of $M$ and gluing each $B_i$ in one copy to the corresponding $B_i$ in the other copy by the identity map a \emph{partial double} of $M$. We denote this partial double by $M\cup_{\mathcal B} M$. If we let $G=\pi_1(M)$, and $H_i=\pi_1(B_i)$ and $\mathcal H=\{H_1,...,H_m\}$, then $\pi_1(M\cup_{\mathcal B} M)\cong G\ast_{\mathcal H} G$. So, combining Lemma \ref{boundarysep} and Lemma \ref{algebraic double} we obtain the following.

\begin{theorem}\label{doublerf}
Let $M$ be an extended graph manifold or a pure complex-hyperbolic higher graph manifold with boundary and suppose $\pi_1(M)$ is residually finite. Let $N$ be a partial double of $M$. Then $\pi_1(N)$ is residually finite.
\end{theorem}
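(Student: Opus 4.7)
The plan is to recognize that this statement is essentially an assembly of the two preceding lemmas, so the work is chiefly in verifying that the algebraic setup of Lemma \ref{algebraic double} is in place. First I would unpack the definition of a partial double: $N = M \cup_{\mathcal{B}} M$ is by construction a graph of spaces with two vertex spaces, each a copy of $M$, and one edge space for each $B_i \in \mathcal{B}$, glued along the corresponding boundary components of both copies via the identity. By the Seifert-Van Kampen theorem for graphs of spaces (as used throughout Section 2.3 of \cite{LafontBook}), this identifies $\pi_1(N)$ with the amalgam $G \ast_{\mathcal{H}} G$ in the notation preceding Lemma \ref{algebraic double}, where $G = \pi_1(M)$ and $\mathcal{H} = \{\pi_1(B_1), \ldots, \pi_1(B_m)\}$, with each $H_i = \pi_1(B_i)$ embedded into each vertex copy of $G$ via the inclusion of the boundary component.

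Next I would verify the two hypotheses of Lemma \ref{algebraic double}: that $G$ is residually finite, and that each $H_i \in \mathcal{H}$ is separable in $G$. The first is given by assumption. The second is exactly the content of Lemma \ref{boundarysep}, which is stated to cover both admissible cases, namely extended graph manifolds with boundary and pure complex-hyperbolic higher graph manifolds with boundary. Note also that $\mathcal{H}$ is finite, since $M$ is compact and hence has finitely many boundary components. With these hypotheses verified, Lemma \ref{algebraic double} delivers residual finiteness of $G \ast_{\mathcal{H}} G$, and by the identification above this is $\pi_1(N)$.

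There is no real obstacle here, since the heavy lifting has already been carried out: Lemma \ref{boundarysep} leverages the maximal parabolic (hence maximal abelian, respectively maximal nilpotent) structure of cusp subgroups together with Lemma \ref{maxnil}, and Lemma \ref{algebraic double} is an application of Hempel's criterion (Theorem \ref{HempCrit}) to a two-vertex graph of groups. The only minor point worth confirming at the start is that doubling along a \emph{subset} of the boundary components (rather than the full boundary) still produces a valid graph of spaces whose edge groups are precisely the $\pi_1(B_i)$ for $B_i \in \mathcal{B}$; this is transparent from the identity gluing, since components of $\partial M$ not in $\mathcal{B}$ contribute nothing to the graph of groups decomposition of $\pi_1(N)$.
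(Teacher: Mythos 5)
Your proposal is correct and follows exactly the approach of the paper: identify $\pi_1(M\cup_{\mathcal B}M)$ with $G\ast_{\mathcal H}G$ via Seifert--Van Kampen, then combine Lemma \ref{boundarysep} (separability of boundary subgroups) with Lemma \ref{algebraic double} (residual finiteness of $G\ast_{\mathcal H}G$ from residual finiteness of $G$ and separability of $\mathcal H$). The paper states this precisely in the sentence preceding the theorem, so there is no meaningful difference.
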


Theorem \ref{doublerf} can be applied inductively to produce infinite families of higher graph manifolds with residually finite fundamental group. Explicitly, let $V$ be a fixed finite volume real-hyperbolic manifold with at least two cusps. Let $\bar{V}$ be the manifold with boundary obtained by truncating the cusps of $V$. Let $\mathcal B_0$ be a \emph{proper} subset of the components of $\partial \bar{V}$ and define $M_1=\bar{V}\cup_{\mathcal{B}_0}\bar{V}$. $M_1$ is now an extended graph manifold with boundary with $\pi_1(M_1)$ residually finite by Theorem \ref{doublerf}. Let $\mathcal B_1$ be a proper subset of the components of $\partial M_1$ and let $M_2=M_1\cup_{\mathcal{B}_1}M_1$ which again is an extended graph manifold with at least two boundary components and with $\pi_1(M_2)$ residually finite. Continuing inductively we produce a sequence $M_1$, $M_2$,... of extended graph manifolds with at least two boundary components and residually finite fundamental group. We call these manifolds \emph{iterated partial doubles} of $V$. For each $M_i$, let $M_i'$ denote the double of $M_i$ along the entire boundary of $M_i$. Then $M_i'$ will be a closed extended graph manifold with residually finite fundamental group. Moreover, $M_i'$ has $2^{i+1}$ geometric pieces which are all copies of $V$, so $\chi(M_i')=2^{i+1}\chi(V)$. In particular, the manifolds $M_1'$, $M_2'$,... are all distinct. Thus, for each finite volume real-hyperbolic manifold with at least two cusps, we have an infinite family  $M_1'$, $M_2'$ of distinct extended graph manifolds which satisfy the hypothesis of Theorem \ref{real}.


Starting with a finite volume complex-hyperbolic manifold $W$ with at least two cusps, we can construct a sequence of iterated partial doubles $M_1$, $M_2$,... of $W$ in the same way. Doubling each $M_i$ along the entire boundary to produce closed manifolds $M_i'$ as before, we obtain an infinite family  $M_1'$, $M_2'$ of distinct pure complex-hyperbolic graph manifolds which satisfy the hypothesis of Theorem \ref{complex}.

Given an extended graph manifold or a pure complex-hyperbolic graph manifold with boundary $M$, another way to produce a closed manifold is to start gluing components of the boundary of $M$ together. Algebraically, this produces a manifold whose fundamental group is an HNN extension of $\pi_1(M)$. Preserving residual finiteness under such constructions is quite tricky but can be done in some special cases, see Chapter 15 in \cite{Hempbook} where such constructions are studied in the context of 3-manifolds and \cite{BT78} where they are studied from an algebraic perspective. One case where this construction will produce a manifold with residually finite fundamental group is the following.

Let $G$ be a group with two collections of subgroups $\mathcal H=\{H_1,..., H_m\}$ and $\mathcal K=\{K_1,...,K_m\}$. Suppose $\alpha$ is a automorphism $\alpha\colon G\to G$ such that $\alpha(H_i)=K_i$ for each $i$. Consider the graph of groups with a single vertex with vertex group $G$ and with edges $e_1,..., e_m$ where the edge group for $e_i$ is $H_i$ attached via the identity map on one side and via the map $\alpha|_{H_i}\colon H_i\to K_i$ on the other. We denote the fundamental group of this graph of groups by $G\ast_{\mathcal H, \alpha}$.

The following is an extended version of Lemma 4.4 in \cite{BT78}.

\begin{lemma}\label{HNN}
Let $G$ be a residually finite group with two collections of separable subgroups $\mathcal H=\{H_1,..., H_m\}$ and $\mathcal K=\{K_1,...,K_m\}$. Suppose $\alpha$ is a automorphism $\alpha\colon G\to G$ such that $\alpha(H_i)=K_i$ for each $i$. Then $G\ast_{\mathcal H, \alpha}$ is residually finite.
\end{lemma}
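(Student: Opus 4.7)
The plan is to apply Hempel's criterion (Theorem~\ref{HempCrit}) to the graph-of-groups structure on $G\ast_{\mathcal{H},\alpha}$, which has a single vertex $v$ with $G_v=G$ and $m$ loops $e_1,\ldots,e_m$, where $G_{e_i}=H_i$ is glued to $G$ by the identity on one side and by $\alpha|_{H_i}\colon H_i\to K_i\hookrightarrow G$ on the other. Since each edge is a loop, both occurrences of $v$ as an endpoint of $e_i$ produce a compatibility condition and an instance of condition (3) in the theorem. For a fixed non-trivial $g\in G$ the target is therefore a finite-index normal subgroup $N_v\trianglelefteq G$ satisfying $g\notin N_v$, $g\notin H_iN_v$ whenever $g\notin H_i$, and $g\notin K_iN_v$ whenever $g\notin K_i$, together with edge subgroups $N_{e_i}:=H_i\cap N_v$ that are compatible under both inclusions.

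The first stage essentially repeats the proof of Lemma~\ref{algebraic double}. By residual finiteness of $G$, choose a finite-index normal subgroup $N_0\trianglelefteq G$ with $g\notin N_0$. By separability of each $H_i$ and each $K_i$ (both hypothesized), for every $i$ with $g\notin H_i$ pick a finite-index normal subgroup $N_i$ with $g\notin H_iN_i$, and for every $i$ with $g\notin K_i$ pick a finite-index normal subgroup $N_i'$ with $g\notin K_iN_i'$; otherwise set $N_i=G$, $N_i'=G$. Let $N$ be the finite intersection of all of these; then $N\trianglelefteq G$ has finite index and inherits all three avoidance properties.

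The genuinely new ingredient is compatibility. With $N_{e_i}:=H_i\cap N_v$, the two inclusions $\mathrm{id}\colon H_i\hookrightarrow G$ and $\alpha|_{H_i}\colon H_i\hookrightarrow G$ impose $N_v\cap H_i=N_{e_i}$ (tautological) and $N_v\cap K_i=\alpha(N_{e_i})=\alpha(H_i\cap N_v)$; since $\alpha(H_i)=K_i$, the latter becomes automatic as soon as $\alpha(N_v)=N_v$, because then $\alpha(H_i\cap N_v)=\alpha(H_i)\cap\alpha(N_v)=K_i\cap N_v$. To enforce $\alpha$-invariance I would replace $N$ by
\[
N_v \;:=\; \bigcap_{k\in\mathbb{Z}}\alpha^k(N).
\]
Here I use that $G$ is finitely generated, which is the only case relevant to this paper since the lemma is to be applied to fundamental groups of higher graph manifolds with boundary. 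Under this assumption $G$ has only finitely many subgroups of each finite index, so $\{\alpha^k(N)\}_{k\in\mathbb{Z}}$ is a finite family, and $N_v$ remains a finite-index normal subgroup, now $\alpha$-invariant and contained in $N$, so all three avoidance properties persist.

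Feeding $(N_v,\{N_{e_i}\})$ into Theorem~\ref{HempCrit} then yields residual finiteness of $G\ast_{\mathcal{H},\alpha}$. The main obstacle is precisely the $\alpha$-invariance step: without finite generation of $G$ the orbit $\{\alpha^k(N)\}$ could be infinite and its intersection could have infinite index, so a different device (for instance, passing to a finite-index characteristic subgroup directly, which is possible whenever $G$ has only finitely many subgroups of each given finite index) would be required. This is the one place where the HNN-type construction genuinely needs more input than the amalgamated product treated in Lemma~\ref{algebraic double}.
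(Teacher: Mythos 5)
Your proof is correct and follows essentially the same route as the paper's: reduce to Hempel's criterion, take the intersection $N$ of the avoidance subgroups coming from residual finiteness and separability (including the $K_i$ side, which is the new ingredient versus Lemma~\ref{algebraic double}), and then pass to an $\alpha$-invariant finite-index normal subgroup inside $N$ so that the compatibility conditions on both sides of each loop hold. The only variation is the device used to force $\alpha$-invariance: you intersect the $\alpha$-orbit $\bigcap_{k\in\mathbb{Z}}\alpha^k(N)$, while the paper intersects all subgroups of index $[G:N]$, producing a characteristic (hence in particular $\alpha$-invariant) subgroup. Both reductions rely on the same implicit hypothesis that $G$ has only finitely many subgroups of each given finite index, e.g.\ $G$ finitely generated, which holds in all the applications in the paper; you have simply made this hypothesis explicit, where the paper leaves it silent. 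Your remark at the end that a characteristic-subgroup argument is an alternative is in fact exactly what the paper does.
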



\begin{proof}
Let $g\in G\setminus \{1\}$. Since $G$ is residually finite, there exists a finite index normal subgroup $N_0$ with $g\notin N_0$. Since each $H_i$ is separable, for each $i$ we can find a finite index normal subgroup $N_i$ such that if $g\notin H_i$ then $g\notin H_iN_i$ (whenever $g\in H_i$ we take $N_i=G$). Similarly choose finite index normal subgroups $L_i$ such that if $g\notin K_i$ then $g\notin K_iL_i$. Now let $N'=N_0\cap N_1\cap...\cap N_m\cap L_1\cap... L_m$, which is a finite index normal subgroup of $G$. Let $N$ be the intersection of all subgroups of $G$ with index $[G:N']$, and note that $N$ is finite index, normal, and $\alpha(N)=N$ since $\alpha$ maps any subgroup to one of the same index.

Let $N_{e_i}=N\cap H_i$, and note that for each $i$ the image of $N_{e_i}$ in $K_i$ is $\alpha(N_{e_i})$. We claim that $\{N\}$ and $\{N_{e_1},...,N_{e_m}\}$ are compatible collections of subgroups for this splitting of $G$. Clearly $N\cap H= N_e$, and also $N\cap K=\alpha(N)\cap \alpha(H)=\alpha(N\cap H)=\alpha(N_e)$. Since $N$ is contained in $N_0$, each $N_i$, and each $L_i$, the collections $\{N\}$ and $\{N_2\}$ will also satisfy conditions (2) and (3) of Theorem \ref{HempCrit}. Hence the $G\ast_{\mathcal H, \alpha}$ is residually finite by Theorem \ref{HempCrit}.

\end{proof}

An application of Lemma \ref{HNN} together with Lemma \ref{boundarysep} gives the following.

\begin{proposition}\label{handles}
Let $M$ be an extended graph manifold or a pure complex-hyperbolic higher graph manifold with boundary and suppose $\pi_1(M)$ is residually finite. Let  $\mathcal B=\{B_1,...,B_m\}$ and $\mathcal{B}'=\{B'_1,...,B'_m\}$ be two collections of boundary components of $M$ and suppose $f\colon M\to M$ is a homeomorphism with $f(B_i)=B'_i$ for each $i$. Then the manifold obtained by gluing each $B_i$ to $B_i'$ via the restriction of $f$ has residually finite fundamental group.
\end{proposition}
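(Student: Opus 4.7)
The plan is to realize the fundamental group of the glued manifold as the HNN-type extension $G \ast_{\mathcal{H}, \alpha}$ appearing in Lemma \ref{HNN}, and then to verify the hypotheses of that lemma.

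First, I would set $G = \pi_1(M, p)$ for some basepoint $p \in M$. By the given hypothesis, $G$ is residually finite. Let $H_i = \pi_1(B_i)$ and $K_i = \pi_1(B_i')$, viewed as subgroups of $G$ after choosing paths from $p$ to basepoints in each $B_i$ and $B_i'$. By Lemma \ref{boundarysep}, each $H_i$ and each $K_i$ is separable in $G$, since boundary subgroups of extended graph manifolds and pure complex-hyperbolic higher graph manifolds are separable in the ambient fundamental group.

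Next, I would use the homeomorphism $f \colon M \to M$ to produce the automorphism $\alpha$. Since $f$ is a self-homeomorphism, it induces an isomorphism on fundamental groups; after choosing a path from $f(p)$ back to $p$ we obtain an automorphism $\alpha \colon G \to G$. The condition $f(B_i) = B_i'$ ensures that, with appropriate choices of basepoint paths inside each $B_i$, we have $\alpha(H_i) = K_i$ for every $i = 1, \ldots, m$.

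Finally, I would identify the fundamental group of the manifold obtained by gluing each $B_i$ to $B_i'$ via $f|_{B_i}$ with the graph of groups $G \ast_{\mathcal{H}, \alpha}$ from the discussion preceding Lemma \ref{HNN}: the Seifert–Van Kampen theorem applied to this gluing yields a single vertex group $G$ with one edge per index $i$, each edge group $H_i$ embedded on one side by the identity inclusion $H_i \hookrightarrow G$ and on the other side by $\alpha|_{H_i} \colon H_i \to K_i \hookrightarrow G$, which is precisely the presentation of $G \ast_{\mathcal{H}, \alpha}$. With this identification, all three hypotheses of Lemma \ref{HNN}, namely residual finiteness of $G$, separability of each $H_i$ and $K_i$, and the existence of $\alpha$ with $\alpha(H_i)=K_i$, are verified, so the lemma immediately gives that $\pi_1$ of the glued manifold is residually finite. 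The only non-routine step is bookkeeping with basepoints to make the identification $\pi_1(\text{glued}) \cong G \ast_{\mathcal{H}, \alpha}$ rigorous, but this is a standard application of Seifert–Van Kampen for graphs of spaces as in Definition \ref{graphofspaces}.
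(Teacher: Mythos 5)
Your proposal is correct and follows exactly the paper's approach: the paper's proof is the one-line remark that the result is "an application of Lemma \ref{HNN} together with Lemma \ref{boundarysep}," and you have simply spelled out the identification $\pi_1(\text{glued}) \cong G \ast_{\mathcal{H}, \alpha}$ via Seifert--Van Kampen, the separability input from Lemma \ref{boundarysep}, and the automorphism $\alpha$ induced by $f$. No gaps; your version is a faithful expansion of the paper's argument.
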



Let $M$ be a manifold and let $\mathcal B=\{B_1,...,B_m\}$ and $\mathcal{B}'=\{B'_1,...,B'_m\}$  be two collections of boundary components of $M$. Let $f\colon M\to M$ be a homeomorphism which maps each $B_i$ to $B_i'$. Let $M\cup_{\mathcal B, f} M$ denote the manifold obtained by taking two copies of $M$ and gluing each $B_i$ in the first copy to $B_i'$ in the second via the restriction of $f$. We call $M\cup_{\mathcal B, f} M$ a \emph{twisted partial double} of $M$. Note that $M\cup_{\mathcal B, f} M$ is a double cover of the manifold obtained from $M$ by gluing each $B_i$ to $B_i'$. Since finite index subgroups of residually finite groups are residually finite, we get the following Corollary to Proposition \ref{handles}.

\begin{corollary}
Let $M$ be an extended graph manifold or a pure complex-hyperbolic higher graph manifold with boundary and suppose $\pi_1(M)$ is residually finite. Then any twisted partial double of $M$ has residually finite fundamental group.
\end{corollary}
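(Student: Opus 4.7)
The plan is to reduce the statement directly to Proposition \ref{handles} via a covering-space argument. First, I would observe that the hypothesis of Proposition \ref{handles} is exactly what is assumed here: $M$ is an extended graph manifold or pure complex-hyperbolic higher graph manifold with $\pi_1(M)$ residually finite, and the two collections $\mathcal{B}=\{B_1,\ldots,B_m\}$, $\mathcal{B}'=\{B_1',\ldots,B_m'\}$ of boundary components are related by a self-homeomorphism $f\colon M\to M$ with $f(B_i)=B_i'$. Let $N$ denote the manifold obtained from a single copy of $M$ by gluing $B_i$ to $B_i'$ via $f|_{B_i}$ for each $i$. Then Proposition \ref{handles} applies to give that $\pi_1(N)$ is residually finite.

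Next, I would invoke the observation stated immediately before the Corollary: the twisted partial double $M\cup_{\mathcal{B}, f} M$ is a degree-two cover of $N$. The covering map is the obvious one sending both copies of $M$ down to the single copy used to form $N$; the two gluings in $M\cup_{\mathcal{B}, f} M$ (which pair up $B_i$ in one copy with $B_i'$ in the other, and vice versa) cover the single self-gluing in $N$, and one can check in local charts near a glued boundary that this map is a two-to-one covering of manifolds.

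Finally, since $\pi_1(M\cup_{\mathcal{B}, f} M)$ embeds as an index-two subgroup of $\pi_1(N)$, and since any finite-index subgroup of a residually finite group is itself residually finite, the conclusion follows at once from the residual finiteness of $\pi_1(N)$ established in the first step.

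There is no real obstacle here: the Corollary is essentially a formal consequence of Proposition \ref{handles} together with standard hereditary properties of residual finiteness under passage to finite-index subgroups. The only minor point worth writing out carefully is the verification that $M\cup_{\mathcal{B},f} M\to N$ is a genuine two-fold covering; this is a routine exercise in gluings, and the rest of the argument is formal.
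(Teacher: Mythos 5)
Your argument is exactly the paper's: apply Proposition \ref{handles} to the manifold $N$ obtained from a single copy of $M$ by gluing each $B_i$ to $B_i'$ via $f$, observe that the twisted partial double is a double cover of $N$, and conclude via the standard fact that finite-index subgroups of residually finite groups are residually finite. No substantive difference from the paper, which records precisely this observation in the sentence immediately preceding the corollary.
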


\subsection{Explicit Examples}

In this section, we provide some explicit examples of four-dimensional pure complex-hyperbolic higher graph manifolds that are also iterated partial doubles. The literature seems to contain many examples of extended graph manifolds, but on the other hand it seems to be lacking \emph{any} pure complex-hyperbolic examples. The building blocks for our constructions are the explicit examples of cusped complex-hyperbolic surfaces with minimal volume constructed in \cite{DS18}. We first present two examples, one with Euler characteristic two and the other with Euler characteristic four. Similar examples with arbitrarily large Euler characteristic can be easily manufactured. Indeed, as shown in \cite{DS17b} one can construct explicit examples of cusped complex hyperbolic surfaces saturating the whole admissible volume spectrum, that is, with Euler characteristic equal to any positive integer. By using these examples with arbitrary large Euler characteristic, one can then construct explicit pure complex-hyperbolic higher graph manifolds with zero signature, with Euler number equal to any positive even number, and with residually finite fundamental group. Finally, we present a third example where the doubling operation is twisted via a self-isomorphism.

\begin{example}\label{first}
	The first example is obtained by doubling along all its boundary components the complex hyperbolic surface with two (truncated) cusps given in Example 2 of \cite{DS18}. We denote this complex-hyperbolic surface by $S$. $S$ has two cusps with cross sections $S^1$-bundles over tori respectively of Euler number one and three. We double gluing via the identity map, and if we consider the two copies of $S$ with opposite orientations we then get an oriented $4$-manifold with signature zero and Euler characteristic two. Let $M_1$ denote this manifold, See Figure \ref{fig1}.
\end{example}

 \begin{figure}\label{fig1}
\def\svgwidth{3in}  
  \centering
\begingroup%
  \makeatletter%
  \providecommand\color[2][]{%
    \errmessage{(Inkscape) Color is used for the text in Inkscape, but the package 'color.sty' is not loaded}%
    \renewcommand\color[2][]{}%
  }%
  \providecommand\transparent[1]{%
    \errmessage{(Inkscape) Transparency is used (non-zero) for the text in Inkscape, but the package 'transparent.sty' is not loaded}%
    \renewcommand\transparent[1]{}%
  }%
  \providecommand\rotatebox[2]{#2}%
  \newcommand*\fsize{\dimexpr\f@size pt\relax}%
  \newcommand*\lineheight[1]{\fontsize{\fsize}{#1\fsize}\selectfont}%
  \ifx\svgwidth\undefined%
    \setlength{\unitlength}{473.54365792bp}%
    \ifx\svgscale\undefined%
      \relax%
    \else%
      \setlength{\unitlength}{\unitlength * \real{\svgscale}}%
    \fi%
  \else%
    \setlength{\unitlength}{\svgwidth}%
  \fi%
  \global\let\svgwidth\undefined%
  \global\let\svgscale\undefined%
  \makeatother%
  \begin{picture}(1,0.42277379)%
    \lineheight{1}%
    \setlength\tabcolsep{0pt}%
    \put(0,0){\includegraphics[width=\unitlength,page=1]{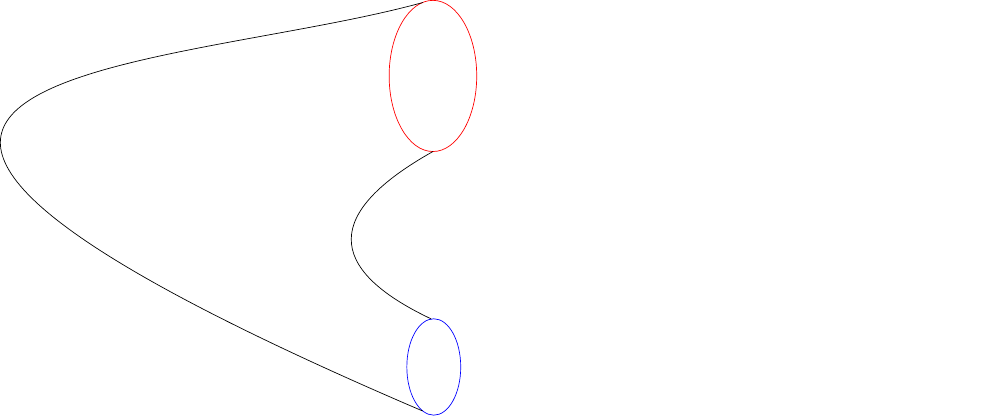}}%
    \put(0.17023785,0.21567045){\color[rgb]{0,0,0}\makebox(0,0)[lt]{\lineheight{1.25}\smash{\begin{tabular}[t]{l}$+$\end{tabular}}}}%
    \put(0,0){\includegraphics[width=\unitlength,page=2]{fig1.pdf}}%
    \put(0.74971957,0.21766183){\color[rgb]{0,0,0}\makebox(0,0)[lt]{\lineheight{1.25}\smash{\begin{tabular}[t]{l}$-$\end{tabular}}}}%
  \end{picture}%
\endgroup%
 \\
	\caption{Two copies of $S$ with opposite orientations. The red boundary components are $S^1$-bundles with Euler number 3 while the blue ones are $S^1$ bundles with Euler number 1. Adjacent boundary components are glued together via the identity map to produce $M_1$. The symbol ``+'' refers to the canonical orientation induced by the complex structure, and naturally the symbol ``--'' refers to the opposite orientation. } 
\end{figure}

\begin{example}\label{second}
	The second example is obtained by two iterated partial doubling operations on the complex hyperbolic surface with four cusps given in Example 1 of \cite{DS18} (the so-called Hirzebruch's example \cite{Hir84}). We denote this complex-hyperbolic surface by $X$. $X$ has four cusps with cross sections $S^1$-bundles over tori all with Euler number one. We double first by gluing via the identity two boundary components, and then we double again to obtain a closed manifold. Again, if we consider the two vertices with the opposite orientation we then get an oriented $4$-manifold with signature zero and Euler characteristic four which we denote by $M_2$. See Figure \ref{fig2}. 
\end{example}

 \begin{figure}\label{fig2}
\def\svgwidth{2in}  
  \centering
\begingroup%
  \makeatletter%
  \providecommand\color[2][]{%
    \errmessage{(Inkscape) Color is used for the text in Inkscape, but the package 'color.sty' is not loaded}%
    \renewcommand\color[2][]{}%
  }%
  \providecommand\transparent[1]{%
    \errmessage{(Inkscape) Transparency is used (non-zero) for the text in Inkscape, but the package 'transparent.sty' is not loaded}%
    \renewcommand\transparent[1]{}%
  }%
  \providecommand\rotatebox[2]{#2}%
  \newcommand*\fsize{\dimexpr\f@size pt\relax}%
  \newcommand*\lineheight[1]{\fontsize{\fsize}{#1\fsize}\selectfont}%
  \ifx\svgwidth\undefined%
    \setlength{\unitlength}{447.65247255bp}%
    \ifx\svgscale\undefined%
      \relax%
    \else%
      \setlength{\unitlength}{\unitlength * \real{\svgscale}}%
    \fi%
  \else%
    \setlength{\unitlength}{\svgwidth}%
  \fi%
  \global\let\svgwidth\undefined%
  \global\let\svgscale\undefined%
  \makeatother%
  \begin{picture}(1,1.21305135)%
    \lineheight{1}%
    \setlength\tabcolsep{0pt}%
    \put(0,0){\includegraphics[width=\unitlength,page=1]{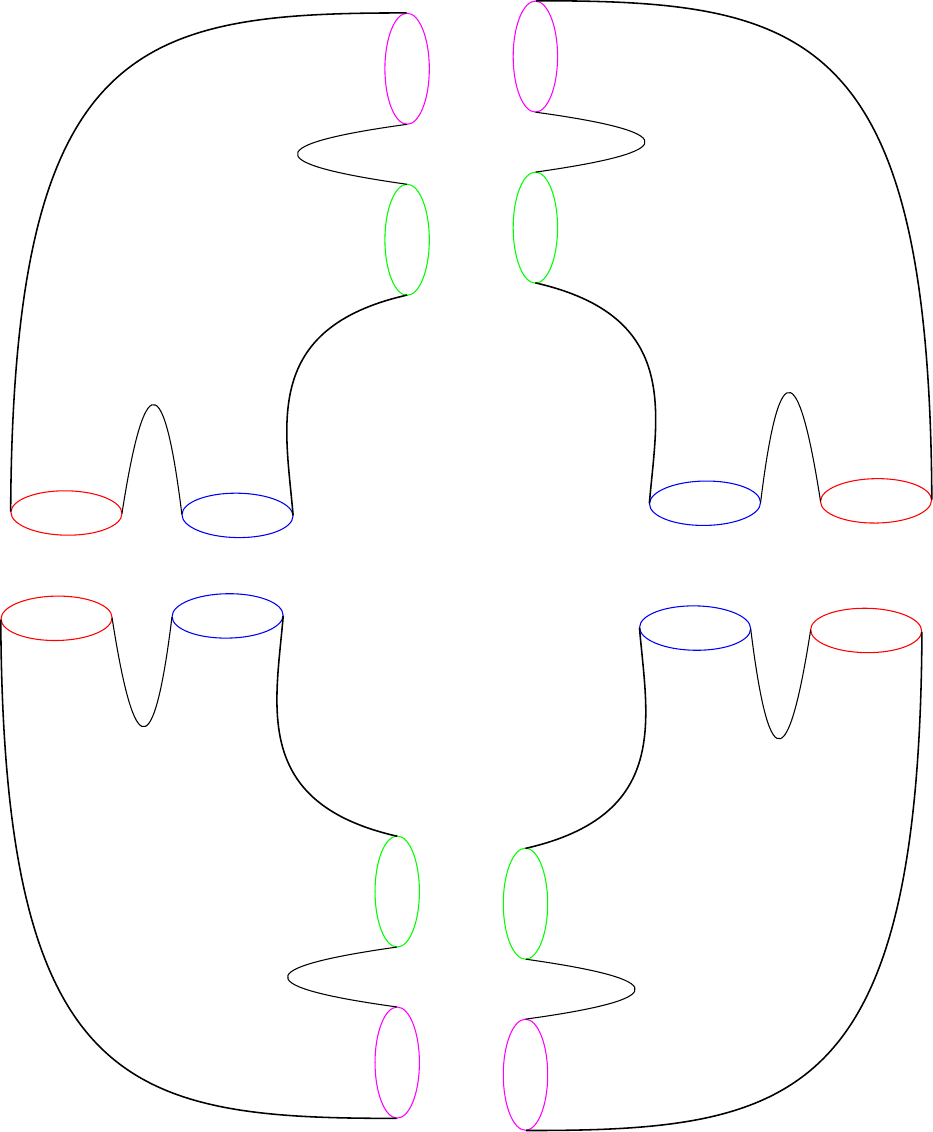}}%
    \put(0.13562575,0.25236899){\color[rgb]{0,0,0}\makebox(0,0)[lt]{\lineheight{1.25}\smash{\begin{tabular}[t]{l}$+$\end{tabular}}}}%
    \put(0.13788618,0.96666458){\color[rgb]{0,0,0}\makebox(0,0)[lt]{\lineheight{1.25}\smash{\begin{tabular}[t]{l}$-$\end{tabular}}}}%
    \put(0.76854591,0.968925){\color[rgb]{0,0,0}\makebox(0,0)[lt]{\lineheight{1.25}\smash{\begin{tabular}[t]{l}$+$\end{tabular}}}}%
    \put(0.74820206,0.22524384){\color[rgb]{0,0,0}\makebox(0,0)[lt]{\lineheight{1.25}\smash{\begin{tabular}[t]{l}$-$\end{tabular}}}}%
  \end{picture}%
\endgroup%
 \\
	\caption{$4$ copies of $X$ with orientations as shown. Adjacent boundary components are glued together via the identity map to produce $M_2$.} 
\end{figure}

\begin{example}\label{third}
	The third example is obtained again starting with $X$ as in Example \ref{second}. In order to obtain a twisting self-isomorphism, we need to describe Hirzebruch's example in some more details. One starts with the elliptic curve $E_{\zeta}$ associated to the lattice $\IZ[1, \zeta]$  where $\zeta=e^{\frac{\pi i}{3}}$. Inside the abelian surface $A=E_{\zeta}\times E_{\zeta}$ with coordinates $(z, w)$, one considers the configuration of elliptic curves
	\begin{align}\label{configuration}
	w=0, \quad z=0, \quad w=z,\quad w=\zeta z.
	\end{align}
	The abelian surface $A$ admits a self-isomorphism $\psi: A\to A$ that preserves the configuration given in Equation \eqref{configuration} defined by setting
	\[
	\psi(z, w)=(w, \zeta z).
	\]
	More precisely, $\psi$ maps the first elliptic curve in \eqref{configuration} to the second, the second to the first, the third to the fourth, and the fourth to the third. Now the configuration of elliptic curves in Equation \eqref{configuration} intersect in the point $(0, 0)\in A$ only, and by blowing-up this point we obtain a pair $(X, D)$ where $X$ is a blown-up abelian surface and where $D$ is a divisor consisting of four smooth disjoint elliptic curves, say $D_1$, $D_2$, $D_3$, $D_4$. We then have that $\psi:A\to A$ lifts to a self-isomorphism $\bar{\psi}: (X, D)\to (X, D)$ where
	\[
	\bar{\psi}(D_1)=D_2, \quad \bar{\psi}(D_2)=D_1,\quad \bar{\psi}(D_3)=D_4, \quad \bar{\psi}(D_4)=D_3, \quad \bar{\psi}(E)=E,
	\]
	where $E$ is the exceptional divisor in $X$. Now $(X, D)$ is a smooth toroidal compactification of a complex-hyperbolic surface with Euler characteristic one and with four cusps, see \cite{DS18} for more details. The cusps are in one-to-one correspondence with the elliptic curves $D_{i}$, $i=1, 2, 3, 4$, and the complex-hyperbolic surface is biholomorphic to $X\setminus D$. Thus, $\bar{\psi}: X\setminus D\to X\setminus D$ is a biholomorphism preserving the punctured exceptional divisor which is totally geodesic in $(X\setminus D, g_{\IC})$ (see Proposition 2.1 in \cite{DS19b}).

	Now as in Example \ref{first}, $X\setminus D$ has four cusps with cross sections $S^1$-bundles over tori all with Euler number one. We can then produce a twisted double by using $\bar{\psi}$. We denote this manifold by $M_3$. Again, if we consider the two copies of $X$ with the opposite orientation we then get an oriented $4$-manifold with signature zero and Euler characteristic two. See Figure \ref{fig3}. 
\end{example}
 \begin{figure}
\def\svgwidth{3in}  
  \centering
\begingroup%
  \makeatletter%
  \providecommand\color[2][]{%
    \errmessage{(Inkscape) Color is used for the text in Inkscape, but the package 'color.sty' is not loaded}%
    \renewcommand\color[2][]{}%
  }%
  \providecommand\transparent[1]{%
    \errmessage{(Inkscape) Transparency is used (non-zero) for the text in Inkscape, but the package 'transparent.sty' is not loaded}%
    \renewcommand\transparent[1]{}%
  }%
  \providecommand\rotatebox[2]{#2}%
  \newcommand*\fsize{\dimexpr\f@size pt\relax}%
  \newcommand*\lineheight[1]{\fontsize{\fsize}{#1\fsize}\selectfont}%
  \ifx\svgwidth\undefined%
    \setlength{\unitlength}{651.64613739bp}%
    \ifx\svgscale\undefined%
      \relax%
    \else%
      \setlength{\unitlength}{\unitlength * \real{\svgscale}}%
    \fi%
  \else%
    \setlength{\unitlength}{\svgwidth}%
  \fi%
  \global\let\svgwidth\undefined%
  \global\let\svgscale\undefined%
  \makeatother%
  \begin{picture}(1,0.45548057)%
    \lineheight{1}%
    \setlength\tabcolsep{0pt}%
    \put(0,0){\includegraphics[width=\unitlength,page=1]{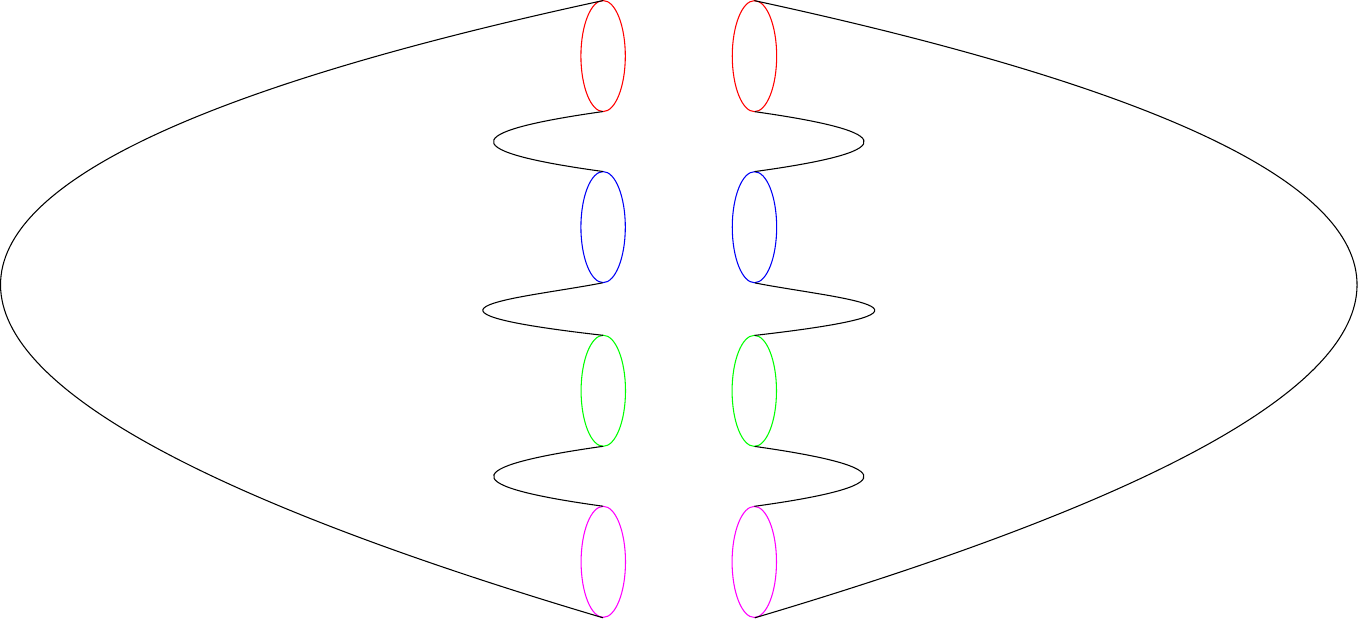}}%
    \put(0.17541024,0.23539114){\color[rgb]{0,0,0}\makebox(0,0)[lt]{\lineheight{1.25}\smash{\begin{tabular}[t]{l}$+$\end{tabular}}}}%
    \put(0.74417782,0.23429309){\color[rgb]{0,0,0}\makebox(0,0)[lt]{\lineheight{1.25}\smash{\begin{tabular}[t]{l}$-$\end{tabular}}}}%
    \put(0,0){\includegraphics[width=\unitlength,page=2]{fig3.pdf}}%
  \end{picture}%
\endgroup%
 \\
	\caption{$2$ copies of $X$ with opposite orientations. Boundary components are glued together via the restriction of $\bar{\psi}$ according to the given matching to produce $M_3$.} 
	\label{fig3}
\end{figure}

\begin{remark}
	Theorem \ref{complex} and Theorem \ref{doublerf} immediately imply that the closed $4$-manifolds constructed in Example \ref{first} and Example \ref{second} do satisfy the Singer conjecture. Recall that because of Lemma 14 in \cite{CSS19} both examples are indeed aspherical.
\end{remark}

\section{The Singer Conjecture in Dimension Three Revisited}\label{revisited}


The Singer Conjecture was proved for 3--manifolds in a celebrated paper of L\"uck and Lott. Here we show that Theorem \ref{real} can be used to give another proof. 

We first recall some 3-manifold topology; for more details, see \cite{AFW, Thu80}. A closed 3-manifold $M$ is aspherical if and only if $\pi_1(M)$ is infinite and $M$ is irreducible, that is every embedded 3-sphere bounds a 3-ball in $M$.  Passing to a double cover of $M$ if necessary, we can assume that $M$ is orientable. To any such $M$ we can apply the Geometrization Theorem, see Theorem 1.7.6 in \cite{AFW} for the version we state here.

\begin{theorem}[Geometrization Theorem (Perelman)]
Let $M$ be a closed, irreducible, orientable 3-manifold. Then $M$ has a collection of disjoint, embedded, incompressible tori $\{T_1,...,T_m\}$ such that each component of $M\setminus \{T_1,...,T_m\}$ is either Seifert-fibered or hyperbolic. 
\end{theorem}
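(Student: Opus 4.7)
The Geometrization Theorem is Perelman's celebrated result completing Hamilton's Ricci flow program, so any ``proof proposal'' here is a high-level sketch of that program rather than a novel argument. The plan is to endow $M$ with an arbitrary Riemannian metric $g_0$, evolve it under Hamilton's Ricci flow $\partial_t g = -2\,\mathrm{Ric}(g)$, and analyze the long-time behavior; the topological decomposition will emerge from the geometry of the flow.

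First I would set up short-time existence and the evolution of curvature, then develop the monotonicity machinery that makes the whole program work: Perelman's $\mathcal{F}$- and $\mathcal{W}$-entropy functionals, and the reduced volume associated to the $\mathcal{L}$-length. The key output is the no local collapsing theorem, which rules out cigar-like singularity models and provides scale-invariant lower bounds on the injectivity radius in regions of bounded curvature. Combined with Hamilton's Harnack-type estimates, this lets one take subsequential blow-up limits of finite-time singularities and obtain so-called $\kappa$-solutions. Perelman's canonical neighborhood theorem then classifies such solutions as round quotients $S^3/\Gamma$, $\varepsilon$-necks $S^2\times \mathbb{R}$, or caps.

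With this classification in hand, I would implement Ricci flow with surgery: at a singular time, excise the high-curvature neck regions, glue in standard caps drawn from model solutions, and restart the flow. The hardest technical part, and the main obstacle, is choosing the surgery parameters (scales and accuracy) so that the flow extends past every singular time, so that finitely many surgeries occur in any finite time interval, and so that the canonical neighborhood and no local collapsing estimates persist through surgeries with constants independent of time. This is essentially the content of Perelman's second preprint and requires delicate a priori estimates on the surgically modified flow.

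Finally, assuming $M$ is irreducible with infinite $\pi_1$, I would analyze the long-time behavior of the rescaled flow $\tilde g(t) = g(t)/t$. The thick-thin decomposition of Perelman then kicks in: on the thick part, where the normalized injectivity radius stays bounded below, the flow subconverges to a complete finite-volume hyperbolic metric, which by Mostow-Prasad rigidity identifies a canonical hyperbolic piece; the thin part collapses with locally bounded curvature and, via the Cheeger-Gromov-Fukaya collapsing theory in dimension three (Shioya-Yamaguchi, Morgan-Tian, Kleiner-Lott, Cao-Ge), admits an $F$-structure, hence is a graph manifold assembled from Seifert-fibered pieces. A further incompressibility argument for the separating tori (originating with Hamilton and refined by Perelman) yields the canonical JSJ-type decomposition stated in the theorem.
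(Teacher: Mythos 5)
The paper does not attempt a proof of this statement at all; it records the Geometrization Theorem as a black box, citing Theorem 1.7.6 of Aschenbrenner--Friedl--Wilton's book for the precise formulation used. So there is no ``paper proof'' to compare against, and your Ricci-flow-with-surgery sketch is a reasonable bird's-eye summary of the actual Hamilton--Perelman argument (the monotonicity formulas and no-local-collapsing, the $\kappa$-solution classification and canonical neighborhood theorem, surgery with time-independent constants, and the long-time thick-thin decomposition with hyperbolic limits on the thick part and collapsing to a graph manifold on the thin part).

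One genuine gap in what you wrote: you restrict at the end to the case of infinite $\pi_1(M)$, but the theorem as stated covers all closed irreducible orientable $3$-manifolds. When $\pi_1(M)$ is finite the Ricci flow with surgery becomes extinct in finite time, and one needs the finite-time-extinction / elliptization branch of Perelman's work (as completed by Colding--Minicozzi or by Perelman's own curve-shortening argument) to conclude that $M$ is a spherical space form, hence Seifert-fibered with an empty collection of tori. Without that branch your argument does not yield the theorem as stated. It is also worth flagging that the incompressibility of the separating tori, which you mention only in passing, is a substantive step: it is what upgrades the geometric thick-thin decomposition to the JSJ-type topological decomposition in the statement, and it requires the stability/area arguments of Hamilton and Perelman (or the alternative treatments in Kleiner--Lott, Morgan--Tian, Bamler). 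Of course, for the purposes of this paper none of this machinery is reproduced; the theorem is invoked, not proved.
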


We next use the Geometrization Theorem to show that, except in a few special cases, closed aspherical 3-manifolds are finitely covered by extended graph manifolds.



\begin{lemma}\label{3man-graph}
Let $M$ be a closed, aspherical 3-manifold. Suppose $M$ is not Seifet-fibered and not finitely covered by a torus bundle over a circle. Then $M$ is finitely covered by an extended graph manifold.
\end{lemma}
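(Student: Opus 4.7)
The plan is to combine the Geometrization Theorem with standard covering-space arguments to promote the Seifert pieces of $M$ to products of cusped hyperbolic surfaces with circles, thereby realizing a finite cover of $M$ as an extended graph manifold in the sense of Definition \ref{geometric}.

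First, Geometrization produces a finite, possibly empty, family of disjoint incompressible tori $T_1,\ldots,T_m$ such that each component of $M\setminus\bigcup T_j$ is either Seifert-fibered or admits a complete finite-volume hyperbolic structure. If this family is empty, then by the geometry of closed $3$-manifolds $M$ is either Seifert-fibered (excluded by hypothesis), $\mathrm{Sol}$ (finitely covered by a torus bundle, also excluded), or closed hyperbolic; in the last case $M$ is itself a single real-hyperbolic piece with $k=1$ and no gluings and so already fits Definition \ref{geometric}. In the remaining case the hyperbolic pieces are already of the required form, and we only need to adjust the Seifert pieces.

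For each Seifert piece $P$ with base $2$-orbifold $O$, the hypotheses on $M$ imply that $O$ has negative orbifold Euler characteristic: a Euclidean base would force $P$ to be $T^2\times I$ (contradicting minimality of the JSJ decomposition) or else force $M$ itself to be $\mathrm{Nil}$ or $\mathrm{Sol}$ (both finitely covered by torus bundles over $S^1$), while a spherical base is incompatible with asphericity. Being a hyperbolic $2$-orbifold, $O$ is good and admits a finite regular cover by an orientable surface $\Sigma$ with nonempty boundary; lifting this cover to $P$ yields a finite cover that is an oriented circle bundle over $\Sigma$. Since $H^{2}(\Sigma;\mathbb{Z})=0$ for any compact surface with nonempty boundary, this circle bundle is automatically trivial, so the cover is $\Sigma\times S^{1}$.

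Finally, these individual covers must be combined into a single finite cover of $M$. Using the graph of groups splitting of $\pi_1(M)$ coming from its JSJ decomposition, together with the separability of peripheral torus subgroups in both hyperbolic lattices and surface groups, one finds a compatible collection of finite-index normal subgroups in the vertex groups as in Theorem \ref{HempCrit} inducing the above covers piece by piece; an application of Theorem \ref{compcovers} then produces a finite cover $\overline{M}\to M$ in which every Seifert piece has been lifted to a product $\Sigma\times S^{1}$. In this cover each gluing along a torus may be isotoped to an affine diffeomorphism, since every self-diffeomorphism of $T^{2}$ is isotopic to a linear one, and this isotopy does not alter the diffeomorphism type of $\overline{M}$. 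The resulting structure on $\overline{M}$ matches Definition \ref{geometric}. The main obstacle is the simultaneous compatibility of the finite covers across the JSJ tori, but this is precisely the content of Hempel's classical argument for the residual finiteness of $3$-manifold fundamental groups.
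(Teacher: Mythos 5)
Your overall strategy is the same as the paper's: apply Geometrization, promote each Seifert piece to a product $\Sigma\times S^1$ with $\Sigma$ a hyperbolic surface by passing to a finite cover, and assemble the piecewise covers into a finite cover of $M$ via the compatibility machinery of Theorem~\ref{compcovers}. Your observation that the circle bundle over $\Sigma$ is automatically trivial because $H^2(\Sigma;\mathbb Z)=0$ is a clean substitute for the paper's citation of Lemma~4.1 of \cite{Hempel}, and your explicit treatment of the case with no cutting tori is a welcome detail the paper leaves implicit.

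However, there is a genuine gap in the step where you argue that every Seifert piece has a hyperbolic base orbifold. Your dichotomy --- a Euclidean base forces the piece to be $T^2\times I$ or forces $M$ to be Nil or Sol --- omits the possibility of a twisted $I$-bundle over the Klein bottle. This is an orientable, atoroidal, Seifert-fibered manifold with toral boundary whose base orbifold (a M\"obius band or a disk with two cone points of order $2$) is Euclidean, and it can legitimately occur as a piece in the torus decomposition even for manifolds that are not Nil or Sol. For instance, gluing such a piece to a one-cusped finite-volume hyperbolic $3$-manifold produces a closed aspherical $M$ satisfying your hypotheses whose decomposition has a Seifert piece with Euclidean base. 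The paper addresses exactly this issue: it first passes to a further finite cover of $M$ so that the twisted $I$-bundles unwind to $T^2\times I$ pieces, and then refines the collection of cutting tori to absorb those product pieces, after which the remaining Seifert pieces really do have hyperbolic base orbifolds (citing Lemma~3.8 of \cite{GHL}). You should insert an analogous step; as written your claim is false and the subsequent construction of the covers $\Sigma\times S^1$ does not go through for those pieces.
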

\begin{proof}
Passing to a double cover if necessary, we can assume $M$ is orientable.  By the Geometrization theorem, $M$ has a decomposition along tori into pieces which are either hyperbolic or Seifert-fibered. Some of these pieces may be twisted $I$-bundles over Klein bottles, but we can pass to a finite cover of $M$ and refine the collection of tori we are cutting along such that no such pieces occur. In particular, after passing to a finite cover of $M$ we can assume that each Seifert-fibered piece in the decomposition has a hyperbolic base orbifold (see, for example, Lemma 3.8 of \cite{GHL}). Let $P_1, ..., P_k$ denote the hyperbolic pieces and $Q_1,..,Q_l$ denote the Seifert-fibered pieces. For each $Q_i$,  as in Lemma 4.1 of \cite{Hempel} we can find a finite cover $\overline{Q}_i\cong S^1\times \Sigma_i$ of $Q_i$ where $\Sigma_i$ is a compact hyperbolic surface and each boundary component of $\overline{Q}_i$ projects homeomorphically to a boundary component of $Q$. Considering each $P_i$ as a cover of itself, these  will form a compatible collection of covers. Hence by Theorem \ref{compcovers} there is a finite cover $\overline{M}$ of $M$ such that $\overline{M}$ is a graph of spaces with tori edge spaces and each vertex space is hyperbolic or is a copy of $S^1\times \Sigma_i$ for some $i$. Hence $\overline{M}$ is an extended graph manifold.
\end{proof}

\begin{theorem}[Lott-L\"uck \cite{LL95} \& Geometrization]\label{Revisited}
The Singer conjecture holds for closed aspherical 3-manifolds.
\end{theorem}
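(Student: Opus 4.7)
The plan is to reduce via Lemma \ref{3man-graph} to three model cases and dispatch each using tools already established in the paper, leveraging Hempel's theorem that the fundamental group of any closed $3$-manifold is residually finite.

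First I would record that $L^2$-Betti numbers are multiplicative under finite covers: if $\overline{M}\to M$ is a $d$-sheeted cover, then $b^{(2)}_l(\overline{M}) = d\cdot b^{(2)}_l(M)$. In particular, vanishing of $b^{(2)}_l$ on any finite cover of $M$ forces vanishing on $M$ itself. Hence I may freely pass to finite covers. If $M$ is non-orientable, pass to its orientable double cover, and then apply Lemma \ref{3man-graph}. This splits the problem into three (non-exclusive) cases: (i) $M$ is Seifert fibered, (ii) $M$ is finitely covered by a torus bundle over $S^1$, or (iii) $M$ is finitely covered by an extended graph manifold.

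Case (iii) is the reason Theorem \ref{real} was proved: Hempel's theorem guarantees that the fundamental group of the extended graph manifold cover is residually finite, and the odd-dimensional part of Theorem \ref{real} (with $2n+1=3$) gives $b^{(2)}_l = 0$ for all $l$ on that cover, hence on $M$. Cases (i) and (ii) I would handle via Proposition \ref{collapsing}, which requires checking that $\mathrm{MinVol}=0$ and that the fundamental group is residually finite. The residual finiteness is again Hempel's theorem. For the vanishing of the minimal volume: any aspherical Seifert-fibered closed $3$-manifold admits a non-trivial polarized $F$-structure coming from the circle fibration (equivalently, it supports a sequence of metrics with bounded sectional curvature collapsing the $S^1$-fibers), so $\mathrm{MinVol} = 0$ by Cheeger--Gromov; torus bundles over $S^1$ similarly collapse along their torus fibers (they carry a solvable or nilpotent or Euclidean geometry and admit an $F$-structure), so $\mathrm{MinVol}=0$ in that case as well. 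Proposition \ref{collapsing} then concludes.

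The real content has already been done in the paper; the only obstacle in this final step is bookkeeping. The one subtle point is that Lemma \ref{3man-graph} excludes certain small cases which must be addressed by direct collapse arguments, and ensuring each of the three geometric cases is simultaneously covered by residual finiteness (Hempel) and the applicable vanishing mechanism (Theorem \ref{real} or Proposition \ref{collapsing}). Once that is organized, the combination of multiplicativity of $L^2$-Betti numbers under finite covers and the two vanishing theorems from the preceding sections yields $b^{(2)}_l(M) = 0$ for all $l$, completing the proof of the Singer conjecture in dimension three under Geometrization.
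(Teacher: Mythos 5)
Your proposal matches the paper's proof essentially step for step: you reduce via Lemma \ref{3man-graph} to the extended graph manifold case (handled by Hempel plus Theorem \ref{real}) and to the Seifert-fibered and Sol cases (handled by $MinVol=0$ and Proposition \ref{collapsing}), using Hempel's theorem for residual finiteness throughout. The only cosmetic difference is that you make explicit the multiplicativity of $L^2$-Betti numbers under finite covers, which the paper uses implicitly.
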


\begin{proof}
Let $M$ be a closed, aspherical 3--manifold. If $M$ is Seifert-fibered or has a geometry modeled on torus bundle over the circle with Anosov gluing (the so-called Sol-geometry), then $MinVol(M)=0$ and $\pi_1(M)$ is residually finite. For a proof of these facts, we refer to Examples 0.2 and 1.2 in \cite{CG86}. Notice that Example 0.2 is continued at page 311 in \cite{CG86}. By Proposition \ref{collapsing}, the $L^2$--Betti numbers of $M$ vanish. Lemma \ref{3man-graph} shows that in all other cases, $M$ is finitely covered by an extended graph manifold. $\pi_1(M)$ is residually finite by \cite{Hempel} and hence the $L^2$-Betti numbers vanish by Theorem \ref{real}.  
\end{proof}

\begin{remark}
Alternatively in the proof of Theorem \ref{Revisited}, if $M$ is Seifert-fibered or has a geometric Sol-structure, then $\pi_1(M)$ has an infinite normal abelian subgroup and hence the $L^2$--Betti numbers vanish by a result of Cheeger-Gromov, see Corollary 0.6 in \cite{CG86b}. 
\end{remark}

\bibliographystyle{amsplain}

\end{document}